\newtheorem{theorem}{Theorem}
\newtheorem{lemma}{Lemma}
\newtheorem{proposition}{Proposition}
\newtheorem{remark}{Remark}
\newcommand{\K}{\mathcal{K}}
\newcommand{\dd}{\mathrm{d}}
\newcommand{\pa}{\partial}
\newcommand{\F}{\mathcal{F}}
\newcommand{\R}{\mathbb{R}}
\newcommand{\Z}{\mathbb{Z}}
\newcommand{\I}{\mathcal{I}_N}
\newcommand{\N}{\mathbb{N}}
\newcommand{\eps}{\varepsilon}
\begin{document}

\title[Convergence of a scheme for a periodic nonlocal eikonal equation]{Convergence of a semi-explicit scheme for a one dimensional periodic nonlocal eikonal equation modeling dislocation dynamics}

\author[D. Al Zareef]{Diana Al Zareef}
\address{Universit\'e de Technologie de Compi\`egne, LMAC, 60200 Compi\`egne, France}
\email{diana.al-zareef@utc.fr}

\author[A. El Hajj]{Ahmad El Hajj}
\address{Universit\'e de Technologie de Compi\`egne, LMAC, 60200 Compi\`egne, France}
\email{elhajjah@utc.fr}

\author[H. Ibrahim]{Hassan Ibrahim}
\address{Faculty of Science I, Mathematics Department, Lebanese University, Hadath, Lebanon}
\email{hsnibrahim81@gmail.com}

\author[A. Zurek]{Antoine Zurek}
\address{CNRS – Université de Montréal CRM – CNRS, Montréal, Canada and Universit\'e de Technologie de Compi\`egne, LMAC, 60200 Compi\`egne, France}
\email{antoine.zurek@utc.fr}

\date{\today}

\thanks{The authors thank the anonymous referees for comments and corrections that helped us to improve this paper substantially.} 

\begin{abstract}
In this paper, we derive a periodic model from a one dimensional nonlocal eikonal equation set on the full space modeling dislocation dynamics. Thanks to a gradient entropy estimate, we show that this periodic model converges toward the initial one when the period goes to infinity. Moreover, we design a semi-explicit numerical scheme for the periodic model that we introduce. We show the well-posedness of the scheme and a discrete gradient entropy inequality. We also prove the convergence of the scheme and we present some numerical experiments.
\end{abstract}



\maketitle

\tableofcontents


\section{Introduction}
\subsection{Physical motivation}
In this paper, we are interested in the numerical study of a phase field model introduced by Rodney, Le Bouar and Finel in~\cite{22} describing the dynamics of dislocation curves in crystal materials. Dislocations are linear defects that glide in crystallographic slip planes and  whose motion is responsible for the plastic deformation of metals (see \cite{19} for a physical description of dislocations). A given dislocation moves under the action of the so-called Peach-Koehler force computed from linear elasticity equations, where the internal self-stress generated by the dislocation curve represents one of its principal components. We are mainly concerned with the viscoplastic behavior of materials and therefore we neglect all other stresses (exterior to the dislocation curve) that may also contribute to the Peach-Koehler force. We consider a rather simple geometric setting where the dislocations are parallel lines moving in the same two-dimensional $(xy)$ plane embedded in a three dimensional elastic crystal. A cross-sectional view then reduces the geometric study into a one-dimensional framework. To be more specific, we study a one-dimensional model given by the following non-local eikonal equation describing dislocations dynamics
\begin{align}\label{non periodic equation0}
    \begin{cases}
  \partial_t v(x,t) = \left( \K(\cdot) \star v(\cdot,t)(x)\right)
  \left|\partial_xv(x,t)\right|& \mbox{in } \mathbb{R} \times (0, T), \\
  v(x,0) = v_0(x) & \mbox{in } \mathbb{R},
\end{cases}
\end{align}
where $T>0$, the solution $v$ is a real-valued function,  $\partial_t v$, $\partial_x v$ stand respectively for its time and space derivatives and $\K$ is a kernel obtained by the resolution of the equations of linear elasticity. Here the dislocations move with the non-local velocity
$$\K(\cdot) \star v(\cdot,t)(x) = \int_{\R} \K(x-z)v(z,t)dz$$
created by the self-force. In physical terms, it corresponds to the interior elastic stress created by all the dislocations in the material. While the scalar function $v(\cdot,t)$ represents the plastic distortions due to the presence of dislocations, it is chosen so as to capture the dislocation points at a time $t$ (one might roughly think of $v(\cdot,t)$ as the evolution of a sum of weighted Heaviside functions).  

\subsection{Brief review of the literature}
The solution of~\eqref{non periodic equation0} is considered in the sense of continuous viscosity solutions. The theory of viscosity solutions was first introduced by Crandall and Lions in 1980 (for a general overview, see for example the book of Barles~\cite{5}). Many authors used this theory for the resolution of non-local eikonal equation modeling dislocation dynamics. Alvarez et al.~\cite{311} proved short time existence and uniqueness results of discontinuous viscosity solutions under the assumption that dislocations have the shape of Lipschitz graphs. 

Under the restrictive assumption of nonnegative velocity, we refer to~\cite{211, 1720} for a study of long time existence and uniqueness of discontinuous solutions. These  results were recently extended by Barles et al.~\cite{8}, using a new approach that allows to relax some of the assumptions of~\cite{211, 1720}. 

In the case where velocity is allowed to change sign, Barles et al.~\cite{7} obtained a global existence result of weak $L^1$-viscosity solutions. The authors also investigated the relation of this type of solutions with the classical one, as well as the uniqueness in several situations. However, a general global existence and uniqueness result was obtained in~\cite{8} under general and simplified assumptions. Recently,~\cite{10} and~\cite{11} established global existence results for weak discontinuous viscosity solutions in the one-dimensional equation domain based on a new BV estimate. In our work, we aim to numerically approximate a continuous nondecreasing viscosity solution of~\eqref{non periodic equation0} without assuming a Lipschitz regularity on the kernel and on the initial data. The existence of such solution was proved by El Hajj in~\cite{1}. 

Up to our knowledge, there are only few known numerical results. Indeed, a pioneering work was done by Crandall and Lions~\cite{1350}, for approximations of solutions of a broader class of equations called Hamilton-Jacobi equations. Alvarez, Carlini, Monneau, and Rouy~\cite{250} proved the convergence of a first-order scheme for a nonlocal eikonal equation of a Lipschitz continuous solution and extended this convergence result to the numerical analysis of a nonlocal Hamilton-Jacobi equation describing the dynamics of a single dislocation in 2D. We also mention the work of A. Ghorbel, R. Monneau~\cite{50000}, where they proposed an upwind scheme to approximate a Lipschitz continuous viscosity increasing solution having a Lipschitz continuous kernel.

\subsection{Description of our results}\label{sec.description}

This paper has to main objectives:
\begin{itemize}
    \item Deriving a reliable periodic model from~\eqref{non periodic equation0}.
    \item Designing a convergent numerical scheme for such periodic model.
\end{itemize}
The first objective will allow us to get rid of the infinite size of the domain on which~\eqref{non periodic equation0} is defined. Of course, in order to ensure some ``consistency'' between the initial model and the periodic one, we need to justify that when the artificial period goes to infinity one recover the solutions of the initial problem~\eqref{non periodic equation0}. For this purpose, the key idea is to preserve, for the periodic model, a gradient entropy inequality available for~\eqref{non periodic equation0} (see below). This will be done in Section~\ref{sec.derivperiodic} by carefully modifying the kernel $\K$. 

Then, for the second main challenge, we design, in Section~\ref{sec.numsch}, a numerical scheme for the periodic equation. More precisely, we introduce a one step implicit/explicit (IMEX) Euler in time and upwind in space scheme. We will show, inspired by \cite{2}, that this scheme preserves at the discrete level the gradient entropy inequality of the periodic model. This inequality will allow us to show the convergence of the scheme.

\subsection{Gradient entropy estimate for the initial model}

As explained previously, our approach relies mainly on the gradient entropy inequality satisfy by the solutions of~\eqref{non periodic equation}. Therefore, for the convenience of the reader we give some details on this inequality and we refer, for instance, to~\cite{1} for more details.

Thereby, we consider $v$, a smooth as needed, solution of~\eqref{non periodic equation0}. Besides, we also assume that $v_0$ is smooth and nondecreasing. Then, with these assumptions one can prove 
by the classical maximum principle, that $v$ is also nondecreasing along time. This leads us to drop the absolute value in~\eqref{non periodic equation0} and then consider the following non-local transport equation: 
\begin{align}\label{non periodic equation}
    \begin{cases}
  \partial_t v(x,t) = \left( \K(\cdot) \star v(\cdot,t)(x)\right)
 \partial_xv(x,t) & \mbox{in } \mathbb{R} \times (0, T), \\
  v(x,0) = v_0(x) & \mbox{in } \mathbb{R}. 
\end{cases}
\end{align}
Now, similar to~\cite{1}, we introduce the following entropy functional:
\begin{align}\label{1.defent}
    H[\partial_x v](t) = \int_\R g\left(\pa_x v(x,t)\right) \, \dd x,
\end{align}
where $g$ is defined on $\R_+$ with
\begin{equation}\label{1.defg}
     g(x) = x \, \ln(x).        
\end{equation}
Then, one can prove that there exists a constant $C>0$ depending on $T$, $\K$ and $v_0$ such that for all $t \in (0,T)$ it holds
\begin{align}\label{1.entv}
    H[\pa_x v](t) - \int_0^t \int_\R \left(\K(\cdot)\star \pa_x v(\cdot,s)(x)\right) \, \pa_x v(x,s) \, \dd x \dd s \leq C.
\end{align}
Applying Plancherel's identity we notice that the second term in the left hand side is nonnegative if we assume that $\F(\K)(\xi) \leq 0$ for all $\xi \in \R$ where $\F$ denotes the Fourier transform. In fact, following for instance~\cite{1}, all the previous computations can be made rigorous under the following assumptions:\smallskip
\begin{itemize}
\item[\textbf{(H1)}] {\bf{Initial data}}: $v_0 \in L^\infty(\R)$ is a nondecreasing function with $\partial_x v_0 \in L^1(\mathbb{R}) \cap L\,\log\,L(\mathbb{R})$.

\item[\textbf{(H2)}] {\bf Kernel}: $\K \in L^1(\R)$ is an even function such that
\begin{equation}\label{fourier transform is negative}
\displaystyle \int_{\R} \mathcal{K}(x) \dd x = 0\quad \mbox{and}\quad 
\mathcal{F}(\mathcal{K})(\xi) \leq 0, \quad \forall \xi \in \R^{*}. 
\end{equation}
\end{itemize}
In particular, assumption~\textbf{(H1)} implies that $v_0$ is continuous. Let us also recall the definition of the so-called Zygmund space:
\begin{equation}\label{defLlogL}
    L\, \log \,L(\R) = \left\{ f \in L^1_{{\rm loc}}(\R) \, : \, \int_{\R} |f|\ln( e + |f|) \, \dd x <  \infty \right\},
\end{equation}
which is a Banach space with the norm (see~\cite{111})
\begin{align*}
\|f\|_{L\,\log \,L(\R)} = \inf \left\{ \mu > 0 \, : \, \int_{\mathbb{R}} \frac{|f|}{\mu} \,\ln \left( e + \frac{|f|}{\mu} \right) \, \dd x \leq 1 \right\}.
\end{align*}

Assumption~\textbf{(H2)} is a property that arises naturally from the physical laws governing dislocations and was proven in \cite[Proposition 6.1]{311}. In particular, the negativity of the Fourier transform of the kernel follows directly from the positivity of the elastic energy. In Section 6, we present numerical simulations based on a specific kernel derived from the Peierls–Nabarro model, developed within the framework of isotropic elasticity (see  \cite{19} and  \cite[Sec. 6.2.2]{311} for the explicit computation).

\subsection{Outline of the paper}
This paper is organized as follows. In Section~\ref{sec.periodicmodel}, we derive our periodic model and we give a formal insight on its mathematical structure. Moreover, we also justify that when the period goes to infinity one recover the initial model~\eqref{non periodic equation}. Then, in Section~\ref{sec.nummain}, we introduce our IMEX numerical scheme and state our main results. We study in Section~\ref{sec.studyscheme} its main properties on fixed grids in space and time, and we prove its convergence in Section~\ref{convergence}. Finally, we present some numerical experiments in Section~\ref{sec.numexp}.

\section{Introduction of a periodic model}\label{sec.periodicmodel}

\subsection{Derivation of a periodic model}\label{sec.derivperiodic}

We first consider for a fixed $P>0$ the following ansatz: 
\[
v^P(x, t) := v(x, t) - L^Px, \quad \mbox{for }(x,t) \in [-P,P] \times (0,T),
\]
where 
\begin{equation}\label{def.LP}
L^P=\frac{v_0(P)-v_0(-P)}{2P}, 
\end{equation} 
and $v^P$ is a $2P$-periodic function. This approach allows to describe the evolution of dislocations in a periodic domain (i.e. without boundray effect) assuming that the total dislocation density is preserved over time, which is reflected in the fact that
\begin{equation}\label{densitetotale}
\int_{-P}^P  \partial_x v(x,t) \dd x= \int_{-P}^P  (\partial_x v^P(x,t) +L^P) \dd x= 
 v_0(P)-v_0(-P),
\end{equation}
while respecting the positivity of the density ($\partial_x v \ge 0$). 
This is a way of studying the bulk behavior of the material away from its boundary  by modeling a periodic distribution of dislocations of length $2P$. Hence, we have  
 \begin{align*}
     \partial_t v^P(x,t) &= \left( \mathcal{K}(\cdot) \star v(\cdot,t)(x)\right) \, \partial_x v(x,t)= \left( \mathcal{K}(\cdot) \star v(\cdot,t)(x)\right) \,\left( \partial_x v^P(x, t) + L^P\right).
 \end{align*} 
According to~\textbf{(H2)} and further assuming the following hypothesis on the kernel: 
\begin{itemize}
 \item[\textbf{(H2)'}] Let $x \in \R \mapsto x \, \mathcal{K}(x)$ be a function in $L^1(\R)$.
 \end{itemize}
we  can see that 
 \begin{align*}
     \left( \mathcal{K}(\cdot) \star v(\cdot,t)(x)\right)
     &= \int_{\mathbb{R}}\mathcal{K}(y) v^P(x - y, t)\, \dd y+ \int_{\mathbb{R}}\mathcal{K}(y) L^P(x - y) \, \dd y\\
     &=    \left( \mathcal{K}(\cdot) \star v^P(\cdot,t)(x)\right) + L^Px \int_{\mathbb{R}}\mathcal{K}(y)\, \dd y -  L^P\int_{\mathbb{R}} y\mathcal{K}(y)\, \dd y =   \left(\mathcal{K}(\cdot) \star v^P(\cdot,t)(x)\right).
 \end{align*}
  Consequently, we deduce that  $v^P$ is solution to the following system:
 \begin{align}\label{equation involving u}
 \begin{cases}
        \partial_t v^P(x,t) = \left(\mathcal{K}(\cdot) \star v^P(\cdot,t)(x)\right) \, \left(\partial_x v^P(x, t) + L^P\right) & \mbox{in } \mathbb{R} \times (0, T),\\
        v^P(x,0) = v_0(x) - L^Px & \mbox{in } \mathbb{R}.
 \end{cases}
 \end{align}
Let us notice that systems~\eqref{non periodic equation} and~\eqref{equation involving u} are equivalent. In particular, $\pa_x v^P+L^P$ satisfies the gradient entropy estimate~\eqref{1.entv}.

\begin{remark}{$\;$}

\begin{itemize}
\item Noting that it is entirely possible to establish the results developed in this paper without assuming that the kernel verifies hypothesis {\textbf{(H2)'}}. This is a simplifying hypothesis that makes the analysis and reading easier.
\item  It can be observed from equation \eqref{equation involving u} 
 that the quantity $\int_{[-P,P]}  \partial_x v(x,t) \dd x$  is well preserved, 
which confirms  \eqref{densitetotale}.
\end{itemize}
\end{remark}
Now, our main objective is to derive a periodic model from~\eqref{equation involving u}. As already explained, we need to derive this model in such way that a ``periodic'' counterpart of~\eqref{1.entv} still holds. For this, we will carefully modify the kernel $\K$. Indeed, we first define $\K^P$ the $2P$ periodic function such that 
\[
\K^P(x) = \K(x) \quad \mbox{for }x \in [-P,P],
\]
and extended periodically over $\R$. Now, for a given positive integer $M>1$, 
we introduce  the Fej\'er kernel $F_M$ as: 
 \begin{equation}\label{Fejer kernel}
      F_M(x)= \frac{1}{M} \sum_{\ell = 0}^{M - 1}  D_{\ell}(x),  
  \end{equation}
 where $D_\ell$ is the Dirichlet  kernel  defined by: 
   $$
 D_\ell(x)=  \sum_{|m|\leq \ell} e^{i \,\pi \, m \,x/P}
      \quad\mbox{for} \quad    \ell = 0, \ldots, M-1.
 $$
Then, we define the $2P$ periodic function $\K^P_M$ by
\begin{align}\label{1.defKP}
    \K^P_M(x) = \K^P(x) - \frac{2F_{2M}(x)}{P} \, \int_{|x|\geq P} \left|\K(x)\right| \, \dd x \quad \mbox{for }x \in \R. 
\end{align}
Finally, we consider the  Ces\`aro mean of order $M$ 
 of the Fourier series of $\K^P_M$:
 \begin{align}\label{cesaro}
     \sigma_M^P(x) := \sigma_M(\K^P_{M})(x) = \dfrac{1}{M} \sum_{\ell=0}^{M-1} \sum_{|m|\leq \ell} c_m(\K^P_{M}) \, e^{i \,\pi \,m \, x/P} = \frac{1}{2P} \left(F_M \ast \K^P_{M}\right)(x), \quad \forall x \in I_P,
 \end{align}
 where, throughout the paper, we will denote by $I_P$ the set
 \begin{align}\label{defI}
     I_P := \R/2P\Z,
 \end{align} 
 and by $c_m(\K^P_M)$  the $m$-th Fourier coefficient of $\K^P_M$ given by
 \begin{align*}
     c_m(\K^P_M) = \frac{1}{2P} \int_{I_P} \K^P_M(x) \, e^{-i\pi m \, x/P} \, \dd x, \quad \forall m \in \Z.
 \end{align*}
Moreover, in the definition~\eqref{cesaro} the symbol $\ast$ denotes the convolution between $2P$ periodic functions defined, for any $w$ and $z\in L^1(I_P)$, as
\begin{align}\label{conv.prod.perio}
    (w\ast z)(x) = \int_{I_P} w(x-y) \, z(y) \, \dd y, \quad \mbox{for a.e. }x \in I_P.
\end{align}
Hence, in this paper, we will study the solutions, denoted $u^P_M$, of the following periodic system:
 \begin{align}\label{1.equPaux}
 \begin{cases}
\partial_t u^P_M(x, t) = \left(\sigma_M^P(\cdot) \ast u^P_M(\cdot,t)(x)\right) \, \left(\partial_x u^P_M(x, t) + L^P \right) & \mbox{in } I_P \times (0, T),\\
u^P(x,0) = u^P_0(x) & \mbox{in } I_P,
\end{cases}
 \end{align}
 where $I_P$ is defined in~\eqref{defI}. Let us assume that $P$, $M$ and $u^P_0$ satisfy the following assumptions:

 \begin{itemize}
\item[\textbf{(H3)}] {\bf Parameters of the periodic model:} $P$ is a real positive number and $M$ is a positive integer.

\item[\textbf{(H4)}] {\bf Periodic initial data:} for any $P >0$, the function $u^P_0$ belongs to $L^\infty(I_P)$ with $\pa_x u^P_0 \in L^1(I_P) \cap L\,\log\,L(I_P)$. Moreover, the function 
\begin{align*}
    x \in [-P,P) \mapsto u^P_0(x) + L^Px,
\end{align*}
is nondecreading.

\end{itemize}

\begin{remark}
    The space $L\,\log\,L(I_P)$ is defined as in~\eqref{defLlogL} with $\R$ replaced by $I_P$.
\end{remark}
We show in this paper that under assumptions~\textbf{(H2)}--\textbf{(H4)},  system~\eqref{1.equPaux} admits at least one solution $u^P_M$ in the distributional sense (see  Theorem~\ref{convergence in periodic}). In the next section we explicit the gradient entropy estimate satisfy by the solutions of~\eqref{1.equPaux}. The adaptation at the discrete level of such estimate will be crucial to show the existence of solutions for this system, see the proof of Theorem~\ref{convergence in periodic}.

 \subsection{Gradient entropy estimate in the periodic case}\label{subsec.gredentperio}

Let us now establish formally a gradient entropy inequality, similar to~\eqref{1.entv}, satisfies by $\pa_x u^P_M+L^P$. This will (partially) motivate the introduction of 
 the Ces\`aro means of the Fourier series of the $2P$-periodic kernel  $\K^P_M$. 
 Moreover, as already said, this inequality will be at the cornerstone  of the proof of our convergence result announced in Theorem~\ref{convergence in periodic}. Indeed, as in~\cite{1}, this gradient entropy inequality will lead to a crucial estimate on $\pa_x u^P_M + L^P$ in the space $L \log L(I_P)$ (see Lemma~\ref{Llog L estimate}).

Therefore, as previously, in this part we will assume that $u^P_M$ is smooth as needed and we will also assume that the function $x \mapsto u^P_M(x,t) + L^P\, x$ is increasing. Then, recalling definition~\eqref{1.defent} of the functional $H$, with $\R$ replaced by $I_P$, we have
\begin{align*}
    \frac{\dd}{\dd t}H\left[\pa_x u^P_M + L^P\right](t) & = 
    \int_{I_P} g'(\pa_x u^P_M(x,t)+L^P)\, \pa_t \pa_x u^P_M(x,t) \, \dd x\\
    &= -\int_{I_P} \pa_x^2 u^P_M(x,t)\, g''(\pa_x u^P_M(x,t) + L^P) \pa_t u^P_M(x,t) \, \dd x\\
    &= -\int_{I_P} \pa_x^2 u^P_M(x,t) \, \left(\sigma_M^P(\cdot) \ast u^P_M(\cdot,t)(x) \right) \, \dd x\\
    &= \int_{I_P} \pa_x u^P_M(x,t) \, \left(\sigma_M^P(\cdot) \ast \pa_x u^P_M(\cdot,t)(x)  \right) \, \dd x.
\end{align*}
Now, thanks to Parseval's relation and the property of the Fourier coefficients with respect to the convolution product, we obtain
\begin{align*}
    \frac{\dd}{\dd t} H\left[\pa_x u^P_M + L^P\right](t)= (2P)^2 \sum_{m \in \Z} c_m(\sigma_M^P) \, \left|c_m(\pa_x u^P_M)\right|^2.
\end{align*}
Therefore, to argue as in the case of system~\eqref{non periodic equation}, we need to show that the Fourier coefficients of $\sigma_M^P$ are nonpositive real numbers. First, thanks to the definition~\eqref{cesaro} of $\sigma_M^P$ and the definition~\eqref{1.defKP} of $\K^P_M$, we notice that
\begin{align*}
    c_m(\sigma_M^P) = c_m(F_M) \, c_m(\K^P_M) = c_m(F_M) \, \left(c_m(\K^P) - \frac{2c_m(F_{2M})}{P} \, \int_{|x|\geq P} |\K(x)| \, \dd x \right), \quad \forall m \in \Z.
\end{align*}
Besides, classical computations lead to
\begin{align*}
    c_m(F_M) =
    \left\{
    \begin{array}{ll}
        \left(1-\frac{|m|}{M}\right) & \mbox{if } |m| < M, \\
        0 & \mbox{if } |m|\geq M.
    \end{array}
\right.
\end{align*}
Therefore, we have
\begin{align}\label{auxcoeffFouriercesaro}
    c_m(\sigma_M^P) =
        \left\{
    \begin{array}{ll}
       \left(1-\frac{|m|}{M}\right)\left(c_m(\K^P) - 
       \frac{2}{P}\left(1-\frac{|m|}{2M}\right) \displaystyle\int_{|x|\geq P} |\K(x)| \, \dd x\right) & \mbox{if } |m|< M, \\
        0 & \mbox{if } |m|\geq M.
    \end{array}
    \right.
\end{align}
Let us establish now the following lemma. 

\begin{lemma}\label{fourier series of kpdelta negative} 
Let assumption~\textbf{\emph{(H2)}} holds. Then, the Fourier coefficients of the $2P$-periodic function $\sigma_M^P$ are nonpositive real numbers.
\end{lemma}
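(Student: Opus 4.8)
The plan is to read everything off the explicit formula~\eqref{auxcoeffFouriercesaro} for $c_m(\sigma_M^P)$, which has already been derived in the excerpt, so that no new computation with Fej\'er or Dirichlet kernels is needed. Since $c_m(\sigma_M^P)=0$ whenever $|m|\geq M$, only the range $|m|<M$ requires attention, and there $c_m(\sigma_M^P)=\left(1-\tfrac{|m|}{M}\right)A_m$ with $A_m:=c_m(\K^P)-\tfrac{2}{P}\left(1-\tfrac{|m|}{2M}\right)\int_{|x|\geq P}|\K(x)|\,\dd x$. The factor $1-\tfrac{|m|}{M}$ is a nonnegative real number, so it suffices to show that each $A_m$ is a nonpositive real number. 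Reality is immediate: by~\textbf{(H2)} the kernel $\K$ is even and real, hence so is its $2P$-periodic extension $\K^P$, so $c_m(\K^P)=\tfrac{1}{2P}\int_{-P}^{P}\K(x)\cos(\pi m x/P)\,\dd x\in\R$; every other quantity entering $A_m$ is manifestly real.

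The heart of the argument is the single estimate
\[
c_m(\K^P)\ \leq\ \frac{1}{2P}\int_{|x|\geq P}|\K(x)|\,\dd x,\qquad \forall m\in\Z,
\]
which I would establish by splitting into two cases. For $m=0$: since $\int_\R\K=0$ by~\eqref{fourier transform is negative}, one has $\int_{-P}^{P}\K=-\int_{|x|\geq P}\K$, whence $|c_0(\K^P)|\leq\tfrac{1}{2P}\int_{|x|\geq P}|\K|$. For $m\neq 0$: the quantity $\int_\R\K(x)\cos(\pi m x/P)\,\dd x$ equals $\F(\K)$ evaluated at a nonzero frequency (using again that $\K$ is even and real), hence is $\leq 0$ by~\eqref{fourier transform is negative}; writing $\int_\R=\int_{-P}^{P}+\int_{|x|\geq P}$ and bounding the tail integral in absolute value by $\int_{|x|\geq P}|\K|$ gives $\int_{-P}^{P}\K(x)\cos(\pi m x/P)\,\dd x\leq\int_{|x|\geq P}|\K|$, which is the claimed bound after dividing by $2P$.

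Finally I would combine the two facts. For $|m|<M$ one has $1-\tfrac{|m|}{2M}>\tfrac12$, so $\tfrac{2}{P}\left(1-\tfrac{|m|}{2M}\right)\geq\tfrac1P$, and therefore
\[
A_m\ \leq\ \frac{1}{2P}\int_{|x|\geq P}|\K(x)|\,\dd x-\frac{1}{P}\int_{|x|\geq P}|\K(x)|\,\dd x\ =\ -\frac{1}{2P}\int_{|x|\geq P}|\K(x)|\,\dd x\ \leq\ 0,
\]
and multiplying by the nonnegative factor $1-\tfrac{|m|}{M}$ yields $c_m(\sigma_M^P)\leq0$; for $|m|\geq M$ the coefficient is $0$. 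I expect the only genuinely delicate point to be the bookkeeping of the constant $2/P$ in the definition~\eqref{1.defKP} of $\K^P_M$: the Fej\'er-weighted correction must be large enough to absorb the possibly positive value of $c_m(\K^P)$ for every $|m|<M$, uniformly in $m$, and the case distinction above shows that the crude bound $\tfrac{1}{2P}\int_{|x|\geq P}|\K|$ together with the elementary inequality $1-\tfrac{|m|}{2M}>\tfrac12$ is exactly what makes this go through.
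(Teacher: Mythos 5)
Your proposal is correct and follows essentially the same route as the paper's proof: reduce via~\eqref{auxcoeffFouriercesaro} to the sign of $c_m(\K^P_M)$ for $|m|<M$, bound $c_m(\K^P)$ by $\frac{1}{2P}\int_{|x|\geq P}|\K|$ using the tail estimate together with \textbf{(H2)} (the paper absorbs your $m=0$ case into the same inequality via $\int_\R\K=0$), and conclude with the factor $1-\frac{|m|}{2M}\geq\frac12$. No gaps.
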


 \begin{proof}
 According to~\eqref{auxcoeffFouriercesaro} it is sufficient to show that $c_m(\K^P_M)$ is a nonpositive real number for $|m|< M$. Then, let $|m|< M$ be fixed. We first notice, thanks to assumption~\textbf{(H2)}, that the function $\K^P_M$ is even, so that its Fourier coefficient are real numbers. Now, we subtract $\int_{-P} ^{P}\K^P(x) e^{-i \,\pi \,m \,x/P}$ from the Fourier transform of $\mathcal{K}$ evaluated at $\xi = m/2P$, we have
 \begin{align*} 
       \bigg| \F(\K)(m/2P) -  \int_{-P} ^{P}\K^P(x) \,e^{-i\, \pi\, m\, x/P} \, \dd x \bigg|  &\leq  \bigg| \int_{|x| \geq P}\mathcal{K}(x) e^{-i\, \pi\,m\, x/P} \dd x \bigg|\leq \int_{|x| \geq P} |\mathcal{K}(x)| \, \dd x. 
 \end{align*}
Moreover, using~\textbf{(H2)}, we obtain
 \begin{align*}
     \int_{-P}^{P}\K^P(x) e^{-i\, \pi\, m \,x/P} \, \dd x &\leq  \int_{|x| \geq P} |\mathcal{K}(x)| \, \dd x + \int_{\mathbb{R}}\mathcal{K}(x) \, e^{-i \,\pi\, m \, x/P }\, \dd x \leq \int_{|x| \geq P} |\mathcal{K}(x)| \, \dd x. 
 \end{align*}
Hence,
 \begin{align}\label{2.aux}
      \int_{-P}^{P}\K^P(x) e^{- i\, \pi\,m \,x/P} \, \dd x - 2 \int_{|x| \geq P}|\mathcal{K}(x)| \, \dd x \leq -  \int_{|x| \geq P} |\mathcal{K}(x)| \, \dd x  \leq 0.
 \end{align}
Therefore, as $|m|<M$, we get
\begin{align*}
    c_m(\K^P_M) &= c_m(\K^P) - \frac{2}{P} \left(1-\frac{|m|}{2M}\right) \, \int_{|x|\geq P} |\K(x)| \, \dd x \\
    & \le \frac{1}{2P} \int_{-P}^P \K^P(x) \, e^{-i \, \pi \, m \, x/P} \, \dd x - \frac{1}{P} \int_{|x|\geq P} |\K(x)| \, \dd x\\
    & \le\frac{1}{2P} \left(\int_{-P}^P \K^P(x) \, e^{-i \, \pi \, m \, x/P} \, \dd x - 2\int_{|x|\geq P} |\K(x)| \, \dd x \right) \leq 0,
\end{align*}
where for the last inequality we apply~\eqref{2.aux}.
 \end{proof}

The previous computations allow to understand that in order to preserve the gradient entropy inequality for the periodic model one needs to consider the Ces\`aro means of $\K^P_M$. Indeed, in the definition~\eqref{1.equPaux}, we cannot simply consider $\K^P_M$ since
\begin{align*}
    c_m(\K^P_M) =
        \left\{
    \begin{array}{ll}
        c_m(\K^P) - \frac{2}{P}\left(1-\frac{|m|}{2M}\right)\left(\displaystyle\int_{|x|\geq P} |\K(x)|\,\dd x\right) & \mbox{if } |m|< 2M, \\
        c_m(\K^P) & \mbox{if } |m|\geq 2M,
    \end{array}
    \right.
\end{align*}
and $c_m(\K^P)$ is a priori an unsigned quantity with our assumption~\textbf{(H2)}. Moreover, in order to be able to pass to the limit $P\to +\infty$ and to design our numerical scheme, we also need to regularize the periodic kernel $\K^P_M$. This is done thanks to its Ces\`aro means.

\subsection{From the periodic to the initial model}\label{sec.fromperiodic.to.init}

In this section, we explain how one can recover the initial model~\eqref{non periodic equation} from~\eqref{1.equPaux} when the period $P \to +\infty$. For this purpose, for a given function $v_0$ satisfying~\textbf{(H1)}, we first consider the following $2P$-periodic initial data for~\eqref{1.equPaux}:
\begin{align}\label{1.initperiod}
    u^P_0(x) = v_0(x) - L^P \, x, \quad \mbox{for } x \in [-P,P],
\end{align}
which we extend periodically over $\R$, where $L^P$ is defined in  \eqref{def.LP}. 
We notice that for any $P>0$, the quantity $L^P$ is nonnegative if $v_0$ is nondecreasing. Then, we have the following

\begin{lemma}\label{lem.init.perio.to.nonperiod}
Let assumption~\textbf{\emph{(H1)}} holds and assume that $P\ge 1$. Then, the function $u^P_0$, given by~\eqref{1.initperiod}, satisfies, uniformly with respect to $P$,
\begin{align*}
    u^P_0 \in L^\infty(I_P), \quad \pa_x u^P_0 \in L^1(I_P)\cap L\,\log \,L(I_P).
\end{align*}    
Moreover, the continuous function $x \in [-P,P)  \mapsto u^P_0(x) + L^Px$ is nondecreasing.
\end{lemma}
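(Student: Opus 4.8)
The plan is to verify each claimed property of $u^P_0(x) = v_0(x) - L^P x$ on $[-P,P]$ (extended $2P$-periodically) directly from \textbf{(H1)}, taking care that every bound is independent of $P$ once $P \ge 1$. First I would handle the $L^\infty$ bound: since $v_0 \in L^\infty(\R)$ and $u^P_0$ is $2P$-periodic, it suffices to bound it on $[-P,P]$, where $|u^P_0(x)| \le \|v_0\|_{L^\infty(\R)} + |L^P|\,P$. Now $|L^P| P = \tfrac12|v_0(P) - v_0(-P)| \le \|v_0\|_{L^\infty(\R)}$, so $\|u^P_0\|_{L^\infty(I_P)} \le 2\|v_0\|_{L^\infty(\R)}$, uniformly in $P$. (Here I use that \textbf{(H1)} makes $v_0$ continuous, so the pointwise values $v_0(\pm P)$ make sense.)

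Next, the derivative. On the interior $(-P,P)$ we have $\pa_x u^P_0 = \pa_x v_0 - L^P$ in the distributional sense; but at the endpoints the periodic extension introduces a jump of size $v_0(P) - v_0(-P) - 2PL^P = 0$ by the very definition~\eqref{def.LP} of $L^P$ — this cancellation is the reason the ansatz was chosen this way, and it means $\pa_x u^P_0$ has no singular part on $I_P$ and equals $\pa_x v_0 - L^P$ a.e. Then $\|\pa_x u^P_0\|_{L^1(I_P)} \le \|\pa_x v_0\|_{L^1(-P,P)} + 2P L^P \le \|\pa_x v_0\|_{L^1(\R)} + (v_0(P)-v_0(-P))$. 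Since $v_0$ is nondecreasing, $v_0(P)-v_0(-P) = \int_{-P}^P \pa_x v_0 \le \|\pa_x v_0\|_{L^1(\R)}$, giving $\|\pa_x u^P_0\|_{L^1(I_P)} \le 2\|\pa_x v_0\|_{L^1(\R)}$, uniform in $P$. For the $L\log L$ estimate I would use that $0 \le L^P \le \|\pa_x v_0\|_{L^1(\R)}/(2P) \le \tfrac12\|\pa_x v_0\|_{L^1(\R)}$ is bounded uniformly, and that the map $f \mapsto f - c$ (for $c$ in a bounded set) together with the uniform $L^1$ bound keeps $f$ in a bounded subset of $L\log L$: concretely, since $\pa_x v_0 \ge 0$, one has $0 \le \pa_x u^P_0 + L^P = \pa_x v_0$ a.e., so $|\pa_x u^P_0| \le \pa_x v_0 + L^P$, and then estimate $\int_{I_P} |\pa_x u^P_0|\ln(e + |\pa_x u^P_0|)$ by splitting according to whether $\pa_x v_0 \le 1$ or $> 1$, controlling the large part by $\int \pa_x v_0 \ln(e+\pa_x v_0)\,\dd x$ (finite and $P$-independent by \textbf{(H1)}) plus a term bounded by $C\|\pa_x v_0\|_{L^1(\R)}$ coming from the shift. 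This yields a uniform bound on $\|\pa_x u^P_0\|_{L\log L(I_P)}$.

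Finally, the monotonicity statement is immediate: on $[-P,P)$ we have $u^P_0(x) + L^P x = v_0(x)$, which is continuous and nondecreasing by \textbf{(H1)}, so nothing further is needed; this also reconfirms that \textbf{(H4)} holds for this choice of initial data. The only genuinely delicate point is the $L\log L$ bound, where one must be careful that the Orlicz ``norm'' is not subadditive in a way that would make a naive triangle-inequality argument circular — I would avoid the norm formulation and work instead with the defining integral in~\eqref{defLlogL}, using the pointwise domination $|\pa_x u^P_0| \le \pa_x v_0 + L^P$ with $L^P$ uniformly bounded, plus the elementary inequality $t\ln(e+t) \le a\ln(e+a) + b\ln(e+b) + C(a+b)$ for $t \le a+b$, $a,b \ge 0$. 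Everything else is a routine consequence of the choice~\eqref{def.LP} of $L^P$, which is precisely engineered so that the periodic extension of $u^P_0$ is continuous and the total density $\int_{-P}^P \pa_x v_0$ is preserved.
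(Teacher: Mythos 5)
Your proof is correct, and for the $L^\infty$ bound, the $L^1$ bound and the monotonicity it coincides with the paper's argument (the paper bounds $\|\pa_x v_0\|_{L^1(\R)}$ and $2PL^P$ by $2\|v_0\|_{L^\infty(\R)}$ using monotonicity, which is equivalent to your route). You also make explicit a point the paper leaves implicit, namely that the periodic extension creates no singular part at $x=\pm P$ because the jump $v_0(P)-v_0(-P)-2PL^P$ vanishes by the very definition~\eqref{def.LP}; this is worth stating. Where you genuinely diverge is the $L\log L$ estimate: the paper simply writes $\|\pa_x v_0 - L^P\|_{L\,\log\,L(I_P)} \le \|\pa_x v_0\|_{L\,\log\,L(I_P)} + L^P\|1\|_{L\,\log\,L(I_P)}$ and controls $\|1\|_{L\,\log\,L(I_P)}$ via~\eqref{w in llogl}, using $L^P \le \|v_0\|_{L^\infty(\R)}/P$ to absorb the factor $2P$. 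Your stated worry about this step is unfounded: the quantity in question is the Luxemburg norm, which is a genuine norm on the Orlicz space, so the triangle inequality is perfectly legitimate and not circular. Your alternative — dominating $|\pa_x u^P_0|$ by $\pa_x v_0 + L^P$ with $L^P$ uniformly bounded, bounding the defining integral $\int|w|\ln(e+|w|)$ via the elementary inequality $t\ln(e+t)\le a\ln(e+a)+b\ln(e+b)+(a+b)$ for $0\le t\le a+b$ (which does hold, since $a\ln(1+\tfrac{b}{e+a})+b\ln(1+\tfrac{a}{e+b})\le a+b$), and then converting back to the norm through~\eqref{w in llogl} — is more hands-on but equally valid, and arguably more robust since it never invokes properties of the Luxemburg norm beyond Lemma~\ref{Llog L estimate}. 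The one detail to keep track of in your version is that the conversion between $\int_{I_P} f(w)$ and $\int_{I_P}|w|\ln(e+|w|)$ must not pick up a term proportional to $|I_P|=2P$; it does not, because $f$ vanishes on $\{w\le 1/e\}$ and $|\{w>1/e\}|\le e\|w\|_{L^1(I_P)}$ is uniformly bounded.
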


\begin{proof}
Let $x \in [-P,P)$, then, since $P>0$, we have
\begin{align}\label{estimationLinP}
    \|u^P_0\|_{L^\infty(I_P)}\leq \|v_0\|_{L^\infty(\R)} + \frac12 \left|v_0(P)-v_0(-P)\right|\leq 2\|v_0\|_{L^\infty(\R)},
\end{align}
so that $u^P_0 \in L^\infty(I_P)$ uniformly w.r.t $P$. Moreover, as $P>0$, it holds
\begin{align}\label{estimationL1P}
    \|\pa_x u^P_0\|_{L^1(I_P)} \leq \|\pa_x v_0\|_{L^1(\R)} + 2\|v_0\|_{L^\infty(\R)} \le  4\|v_0\|_{L^\infty(\R)}. 
\end{align}
Then, applying inequality~\eqref{w in llogl} in Lemma~\ref{Llog L estimate} to the constant function equal to one, we obtain
\begin{align}\label{estimationLlogLP}
    \|\pa_x u^P_0\|_{L\,\log\,L(I_P)} 
    &\leq  \|\pa_x v_0\|_{L\,\log\,L(I_P)} + \|v_0\|_{L^\infty(\R)} 
    \left(\frac{1}{P}+ 2\ln(1+e^2)+\frac{2}{e}\right), \\
    & \leq \|\pa_x v_0\|_{L\,\log\,L(\R)} + \|v_0\|_{L^\infty(\R)} 
    \left(1+ 2\ln(1+e^2)+\frac{2}{e}\right),
\end{align}
where we have used the assumption $P\ge 1$ for the last inequality. 
\end{proof}

Therefore, the initial condition~\eqref{1.initperiod} satisfies assumption~\textbf{(H4)}. In particular, assuming that assumptions $\textbf{(H1)}$-$\textbf{(H2)}$ also hold, we deduce from Theorem~\ref{convergence in periodic}, that for any $P\geq 1$ and $M>0$, there exists a solution $u^P_M$ in the distribution sense to~\eqref{1.equPaux} satisfying all the properties listed in Theorem~\ref{convergence in periodic}.\\
 
 Then, we have the following: 

\begin{proposition}\label{prop.PMinfty}
    Let assumptions~~\textbf{\emph{(H1)}}-\textbf{\emph{(H2)}} hold, let $(M_\ell)_{\ell \in \N}$ be a sequence of positive integers such that $M_\ell \to+\infty$ as $\ell\to+\infty$, and let $(P_k)_{k\in\N}$, with $P_0\geq 1$, be a nondecreasing sequence such that $P_k \to +\infty$ as $k\to+\infty$. Let also $(u_{k,\ell})_{k,\ell \in \N}$, with $u_{k,\ell} :=u^{P_k}_{M_\ell}$, be the associated sequence of solutions to~\eqref{1.equPaux} with $u^{P_k}_0$ given by~\eqref{1.initperiod}. Then, there exists $v \in C(\R \times [0,T))$ with
    \begin{align*}
        v \in L^\infty(\R \times (0,T)) \cap C(0,T;L\,\log\,L(\R)), \quad \pa_x v \in L^\infty(0,T;L\,\log\,L(\R)),
    \end{align*}
    such that the family $(u_{k,\ell})_{k,\ell \in\N}$ converges, up to a subsequence, uniformly on every compact set of $\R \times(0,T)$ to $v$ when first $\ell\to+\infty$ and then $k\to+\infty$. Moreover, $v$ is a solution of~\eqref{non periodic equation} in the distributional sense. 
\end{proposition}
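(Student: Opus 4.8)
The plan is to obtain the limit $v$ by a two-stage compactness argument, first sending $M_\ell \to +\infty$ for fixed $P_k$, then sending $P_k \to +\infty$. For the first stage, fix $k$ and write $u_\ell := u_{k,\ell}$. From Theorem~\ref{convergence in periodic} we have uniform-in-$\ell$ bounds: $\|u_\ell\|_{L^\infty(I_{P_k}\times(0,T))}$ is controlled by $\|u^{P_k}_0\|_{L^\infty}$ (via Lemma~\ref{lem.init.perio.to.nonperiod} this is in turn controlled by $\|v_0\|_{L^\infty(\R)}$, uniformly in $k$), and $\pa_x u_\ell$ is bounded in $L^\infty(0,T;L\log L(I_{P_k}))$ by the discrete gradient entropy inequality passed to the limit. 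The $L\log L$ bound on $\pa_x u_\ell$ gives equi-integrability of $\pa_x u_\ell$, hence (together with the $L^\infty$ bound on $u_\ell$) equicontinuity of $u_\ell(\cdot,t)$ in $x$ uniformly in $t$ and $\ell$; the equation itself, since the velocity $\sigma^{P_k}_{M_\ell}\ast u_\ell$ is bounded in $L^\infty$ (the Cesàro/Fejér convolution is an $L^1\to L^\infty$-type bound using $\|\sigma^{P_k}_{M_\ell}\|_{L^1}\lesssim \|\K^{P_k}_{M_\ell}\|_{L^1}$, uniformly in $M_\ell$), gives equicontinuity in $t$. By Arzelà–Ascoli we extract a subsequence converging uniformly on compact subsets of $\R\times[0,T)$ to some $v^{P_k}$, which is $2P_k$-periodic and inherits the $L^\infty$ and $L\log L$ bounds. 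One must check that $v^{P_k}$ solves the periodic transport equation~\eqref{non periodic equation} on $I_{P_k}$ in the distributional sense: the key point is that $\sigma^{P_k}_{M_\ell} \to \K^{P_k}$ in $L^1(I_{P_k})$ as $M_\ell\to+\infty$ (standard Cesàro-means convergence, using that the correction term in~\eqref{1.defKP} carries a factor $2F_{2M}/P$ whose $L^1$ norm is $O(1)$ but which, after the outer Cesàro mean, contributes a vanishing amount — or more carefully, one tracks the Fourier side via~\eqref{auxcoeffFouriercesaro}), so the nonlocal velocity $\sigma^{P_k}_{M_\ell}\ast u_\ell \to \K^{P_k}\ast v^{P_k}$ locally uniformly, and the product $(\sigma^{P_k}_{M_\ell}\ast u_\ell)(\pa_x u_\ell + L^{P_k})$ passes to the limit in the distributional sense against test functions after an integration by parts that moves the $\pa_x$ onto the (smooth times locally-uniformly-convergent) velocity.

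For the second stage, I take the diagonal-type family $v^{P_k}$ just constructed. Again Lemma~\ref{lem.init.perio.to.nonperiod} and the gradient entropy estimate give bounds on $\|v^{P_k}\|_{L^\infty(\R\times(0,T))}$ and on $\pa_x v^{P_k}$ in $L^\infty(0,T;L\log L(I_{P_k}))$ that are \emph{uniform in} $k$ (this uniformity is exactly why~\textbf{(H2)} and the careful kernel modification were needed). On any fixed compact set $[-R,R]\times[0,T)$, once $P_k > R$ the function $v^{P_k}$ is a genuine function on that set, the $L\log L$ bound localizes to give equi-integrability of $\pa_x v^{P_k}$ on $[-R,R]$ hence equicontinuity in $x$, and the equation gives equicontinuity in $t$ (here $L^{P_k}\to 0$ since $L^{P_k}=(v_0(P_k)-v_0(-P_k))/(2P_k)$ and $v_0$ is bounded, and $\K^{P_k}\ast v^{P_k} \to \K\star v$ because $\K^{P_k}\to \K$ in $L^1$ on compacts with the tails controlled uniformly by $\int_{|x|\ge P_k}|\K|\to 0$ from~\textbf{(H2)}). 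Arzelà–Ascoli plus a diagonal extraction over an exhausting sequence of compacts yields $v \in C(\R\times[0,T))$ with the stated $L^\infty$ and $L\log L$ regularity, and $v^{P_k}\to v$ locally uniformly.

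It then remains to identify $v$ as a distributional solution of~\eqref{non periodic equation} on $\R\times(0,T)$. Fix $\varphi\in C_c^\infty(\R\times(0,T))$ with support in $[-R,R]\times(0,T)$ and $k$ large enough that $P_k>R+1$; write the weak formulation for $v^{P_k}$, integrate the right-hand side by parts in $x$ to get $-\int\int v^{P_k}\,\pa_x\big[(\K^{P_k}\ast v^{P_k})\varphi\big] + \int\int L^{P_k}(\K^{P_k}\ast v^{P_k})\varphi$; the first term converges because $v^{P_k}\to v$ locally uniformly and $\pa_x(\K^{P_k}\ast v^{P_k}) = \K^{P_k}\ast \pa_x v^{P_k}$ is bounded in $L^\infty$ with $\K^{P_k}\ast\pa_x v^{P_k} \to \K\star\pa_x v$ weakly-$\star$ (again using $\K^{P_k}\to\K$ in $L^1(-2R,2R)$ plus tail smallness and the uniform $L^1$ bound on $\pa_x v^{P_k}$), and the $L^{P_k}$ term vanishes since $L^{P_k}\to 0$. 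A subtle point worth spelling out is that on $[-R,R]$ the periodic convolution $\K^{P_k}\ast v^{P_k}$ and the full-line convolution $\K\star v$ differ by terms involving the periodized tails of $\K$ against the bounded function $v^{P_k}$, all of which are $O(\int_{|x|\ge P_k - R}|\K|)\to 0$. I expect the main obstacle to be precisely this bookkeeping in the passage of the nonlocal term to the limit: one has to juggle three approximations at once (Cesàro regularization $\sigma^{P}_{M}\rightsquigarrow\K^P$, periodization $\K^P\rightsquigarrow\K$, and the truncation/tail error from $\int_{|x|\ge P}|\K|$) and verify that each is controlled by a quantity tending to zero, using only $\K\in L^1$, evenness, and the Fourier-negativity/zero-mean conditions of~\textbf{(H2)} — the regularity estimates from the entropy bound are the crucial uniform ingredient that makes all of this go through.
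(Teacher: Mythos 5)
Your two-stage compactness structure (first $\ell\to+\infty$ at fixed $k$, then $k\to+\infty$) is exactly the paper's, and the equicontinuity/Arzel\`a--Ascoli and tail-bookkeeping parts are sound. But there is one genuine error at the heart of the first-stage identification: you claim that $\sigma^{P_k}_{M_\ell}\to\K^{P_k}$ in $L^1(I_{P_k})$ as $M_\ell\to+\infty$, so that the velocity converges to $\K^{P_k}\ast v^{P_k}$, arguing that the correction term $\tfrac{2F_{2M}}{P}\int_{|x|\geq P}|\K|$ in~\eqref{1.defKP} ``contributes a vanishing amount'' after the outer Ces\`aro mean. This is false. On the Fourier side, \eqref{auxcoeffFouriercesaro} gives $c_m(\sigma^P_M)=\bigl(1-\tfrac{|m|}{M}\bigr)\bigl(c_m(\K^P)-\tfrac{2}{P}(1-\tfrac{|m|}{2M})\int_{|x|\geq P}|\K|\bigr)$, so for each fixed $m$ the coefficient tends to $c_m(\K^P)-\tfrac{2}{P}\int_{|x|\geq P}|\K|$: the subtracted constant survives in \emph{every} Fourier mode. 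Equivalently, $\tfrac{1}{2P}F_M\ast F_{2M}$ is an approximate identity (its $L^1$ norm being $O(1)$ means it concentrates, not that it vanishes), so
\begin{align*}
\sigma^{P_k}_{M_\ell}\ast u_{k,\ell} \;\longrightarrow\; \K^{P_k}\ast u_k \;-\; 4\,u_k\int_{|x|\geq P_k}|\K(x)|\,\dd x
\qquad\mbox{as }\ell\to+\infty,
\end{align*}
which is exactly what the paper obtains via Fej\'er's theorem. Consequently your intermediate limit $v^{P_k}$ does \emph{not} solve the periodic transport equation with velocity $\K^{P_k}\ast v^{P_k}$; it solves one with the extra zeroth-order velocity term $-4\bigl(\int_{|x|\geq P_k}|\K|\bigr)u_k$.

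The good news is that this error is repairable without changing your architecture: the extra term is controlled by $\int_{|x|\geq P_k}|\K|\,\dd x\to 0$ (since $\K\in L^1(\R)$) together with the uniform $L^\infty$ bound on $u_k$ and the uniform $L^\infty(0,T;L^1)$ bound on $\pa_x u_k$, so it disappears in the second limit $k\to+\infty$ and the final identification of $v$ as a distributional solution of~\eqref{non periodic equation} goes through. But as written, your first-stage conclusion is wrong, and the heuristic you offer for why the kernel modification is harmless in the $M$-limit is precisely the point where the argument must be done carefully.
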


The proof of Proposition~\ref{prop.PMinfty} is quite similar to the proof of Theorem~\ref{convergence in periodic}. For this reason, we postpone its proof in Appendix~\ref{app.proofPMinfty}, see also Remark~\ref{rem.constansMP}.

\begin{remark}
Note that Proposition~\ref{prop.PMinfty}  ensures some ``consistenc'' between the periodic model and the initial model, its proof relies on compactness arguments. It would also be interesting to obtain an explicit error estimate, with respect to $P$, $M$ and an appropriate distance, between $u^P_M$ and $v$ in order to better understand the behavior of the approximate solution $u^P_M$ with respect to these parameters. This is strongly linked to the uniqueness issue which we do not address in this paper and we leave its study for further research.
\end{remark}

\section{Numerical scheme and main results}\label{sec.nummain}

\subsection{Numerical scheme}\label{sec.numsch}

We are now in position to introduce a numerical scheme for the periodic system~\eqref{1.equPaux}. Let us first introduce some notation for the meshes in space and time. Let $N \ge M$ and $N_T$ be positive integers. We define a time step $\Delta t > 0$ such that $\Delta t = T/N_T$ and a space step $\Delta x = P/N$. We also define the sequences
\[
t_n =n\Delta t, \quad \forall n \in \{0,\ldots, N_T\},
\]
 and
\[
x_i = i \Delta x, \quad \forall i\in \I := \Z/2N\Z.
\]

For a fixed positive integer $M$,  we first start by discretizing
the $2P$ periodic initial condition $u^P_0$ as
\begin{align}\label{schemeIC}
u_i^{P, 0} =  u^P_0(x_i), \quad \forall i \in \I.
\end{align}
Then, for a given vector $u^{P,n} = (u^{P, n} _i)_{i \in \I}$, where $u^{P,n}_i$ is an approximation of $u^P(x_i,t_n)$,  we define the vector $u^{P,n+1}= (u^{P, n+1} _i)_{i \in \I}$ as the solution of the following system: 
\begin{align*}
\begin{cases}
  \dfrac{u_i^{ P, n+1} - u_i^{P, n}}{\Delta t} &=   \lambda^M_i \left[u^{P, n + 1}\right] \, \left(\theta^{P,n}_{i+1/2} + L^P \right)  \mbox{ if }   \lambda^M_i \left[u^{P, n + 1}\right] \geq 0, \quad \forall i \in \I,\\ \\
  \dfrac{u_i^{P, n+1} - u_i^{P, n}}{\Delta t} &=  \lambda^M_i \left[u^{P, n + 1}\right] \, \left(\theta^{P,n}_{i-1/2}+ L^P \right)  \mbox{ if }   \lambda^M_i \left[u^{P, n + 1}\right] < 0, \quad \forall i \in \I, 
\end{cases}
\end{align*}
where $ \theta^{P,n}_{i+1/2}$ is the discrete gradient of $u^{P, n} _i$  defined by 
\begin{align}\label{def.dis.grad}
    \theta^{P,n}_{i+1/2} := \dfrac{u^{P,n}_{i+1}-u^{P,n}_i}{\Delta x}, \quad \forall i\in\I.
\end{align}
Before to define the approximate velocity term $\lambda^M_i\left[u^{P,n+1}\right]$, we notice that we can rewrite the previous system as
\begin{equation}\label{numerical compacted scheme}
    \frac{u_i^{P, n+1} - u_i^{P, n}}{\Delta t} =  \lambda^M_i\left[u^{P, n + 1}\right]_+ \, \theta^{P,n}_{i+1/2} - \lambda^M_i\left[u^{P, n + 1}\right]_- \, \theta^{P,n}_{i-1/2}  + L^P \, \lambda^M_i\left[u^{P, n + 1}\right], \quad \forall i \in \I,
\end{equation}
where
\[
\lambda^M_i [\cdot]_+ = \frac{1}{2} \,\Big(\lambda^M_i [\cdot] + \left|\lambda^M_i [\cdot]\right|\Big), \quad \lambda^M_i [\cdot]_- = \frac{1}{2} \,\Big(\left|\lambda^M_i [\cdot]\right| - \lambda^M_i [\cdot] \Big), \quad \forall i \in \I.
\]
Finally, we define
  \begin{equation}\label{h(x)}
  \lambda^M_i \left[u^{P, n + 1}\right]= \sum_{j \in \I} \Delta x \, \sigma_{M,j}^P \, u^{P, n + 1}_{i - j}, \quad \forall i \in \I, 
 \end{equation}
 with $\sigma_{M,j}^P=\sigma_{M}^P(x_j)$ and where $\sigma_{M}^P$ is the Ces\`aro mean of order $M$ 
 of the Fourier series of $\K^P_M$, introduced previously in \eqref{cesaro}. 
 
\begin{remark}
 It is clear that the vector $u^{P,n}$, for any $1\leq n\leq N_T$, solution to~\eqref{numerical compacted scheme}, depends on $M$. However, when no confusion can occur, we make a slight abuse of notation and we neglect $M$ from its indices.
\end{remark}

\begin{remark}
There exists an alternative definition of the term $\sigma^P_{M,j}$. Indeed, we can alternatively define this term as
\begin{align}\label{def.alternative}
\sigma^P_{M,j} = \frac{1}{2P} \sum_{i \in \I} \Delta x \, F_M(x_{j-i}) \, \K^P_{M,i},
\end{align}
where, defining $x_{i\pm 1/2} := (i\pm 1/2) \Delta x$ for any $i \in \I$, we set
\begin{align*}
\K^P_{M,i} = \frac{1}{\Delta x} \int_{x_{i-1/2}}^{x_{i+1/2}} \K^P_M(x) \, \dd x, \quad \forall i \in \I.
\end{align*}
The same results presented in this paper can also be obtained using definition~\eqref{def.alternative}  (see Remarks ~\ref{rem.alter1} and \ref{rem.alter2}). However, as will be shown below, adopting definition~\eqref{cesaro} yields uniform estimates with respect to all the parameters involved ($N$, $N_T$, $M$, and $P$), in contrast to definition ~\eqref{def.alternative}, where the estimates are uniform only with respect to $N$ and $N_T$ (cf. estimate \eqref{est_N_NT}). While this distinction does not hinder the proof of convergence of the scheme, the formulation based on definition~\eqref{cesaro} offers greater robustness and stability under parameter variation.
\end{remark}

\subsection{Main results}
Our first main result deals with the well-posedness of the scheme~\eqref{schemeIC}--\eqref{h(x)} and the qualitative properties of its solution. Before to state properly this result, we slightly modify the definition of the entropy density $g$, defined in~\eqref{1.defg}, by the following nonnegative function
\begin{equation}\label{1.deff}
    0 \leq f(x) = \begin{cases}
        x \ln(x) + \frac{1}{e} & \quad\mbox{if } x\geq 1/e,\\
        0  & \quad \mbox{if } 0 \leq x \leq  1/e.
    \end{cases}          
\end{equation}

\begin{theorem}[Properties of the scheme]\label{thm.scheme}
Let assumptions~\textbf{\emph{(H2)}}--\textbf{\emph{(H4)}} hold, and assume that
\begin{equation}\label{delta t}
    \Delta t < \frac{1}{10 (L^P) \|\mathcal{K}\|_{L^1 (\mathbb{R})}}\,\min\left\{\frac{1}{\|u^P_0\|_{L^\infty(I_P)}\, e^{10 (L^P) T ||\mathcal{K}||_{L^1(\mathbb{R})}} + 1}, 1\right\},
\end{equation}
and
\begin{equation}\label{delta t over delta x}
   \frac{\Delta t}{\Delta x} < \frac{1}{10 \|\mathcal{K}\|_{L^1 (\mathbb{R})}}
    \min\left\{ 
\frac{1}{\left(\|u^P_0\|_{L^\infty(I_P)}\, e^{10 (L^P)T \|\mathcal{K}\|_{L^1(\mathbb{R})}} + 1\right)}, 
\frac{1}{3 \|u^P_0\|_{L^\infty(I_P)}\, e^{10 (L^P)T \|\mathcal{K}\|_{L^1(\mathbb{R})}}} 
    \right\}.
\end{equation}
Then, for any $1\leq n\leq N_T$, the scheme~\eqref{schemeIC}--\eqref{h(x)} admits a unique solution $u^{P,n} = (u^{P,n}_i)_{i\in\I}$ such that
\begin{align}\label{Linftybounddis}
 \max_{ i \in \I} \, |u_{i}^{P, n}| & \leq \|u^P_0\|_{L^\infty(I_P)} \, e^{10 (L^P) T \|\mathcal{K}\|_{L^1(\mathbb{R})}}, 
\end{align} 
and $\theta_{i + 1/2}^{P, n} + L^P$ is nonnegative for any $i\in\I$ where we recall definition~\eqref{def.dis.grad} of $\theta^{P,n}_{i+1/2}$. Furthermore, this solution satisfies the following discrete total variation estimate:
\begin{align}\label{TVestim}
 TV(u^{P, n} )= \sum_{i \in \I} \left| u^{P,n}_{i+1}-u^{P,n}_i\right| \leq TV(u^P_0 )\, e^{10 (L^P) T \|\mathcal{K}\|_{L^1(\mathbb{R})}}, \quad \forall 1\leq n \leq N_T.
 \end{align}
Finally, there exists a positive constant $\zeta_0$ only depending on $T$ and $\|\mathcal{K}\|_{L^1(\mathbb{R})}$ (see~\eqref{defzeta}), such that the following discrete gradient entropy estimate holds
 \begin{align}\label{gradentdis}
 \sum_{i \in\I} \Delta x \, f\left(\theta_{i + 1/2}^{P, n} + L^P\right) \leq \sum_{i \in \I} \Delta x \, f\left(\theta_{i + 1/2}^{P, 0}+ L^P\right) + \zeta_0 \, TV(u^P_0), \quad \mbox{for } 1\leq n \leq N_T,
\end{align}
where we recall definition~\eqref{1.deff} of the function $f$.
\end{theorem}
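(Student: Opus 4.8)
I would prove the four assertions simultaneously by induction on $n$, the induction hypothesis at level $n$ being: the scheme has a unique solution $u^{P,n}=(u^{P,n}_i)_{i\in\I}$ satisfying \eqref{Linftybounddis} and \eqref{TVestim} with $T$ replaced by $t_n$, and $\theta^{P,n}_{i+1/2}+L^P\ge 0$ for all $i\in\I$. Two preliminary facts will be used throughout. First, a uniform discrete Young bound: since $F_M\ge 0$, the quadrature $\sum_{j\in\I}\Delta x\,F_M(x_j-y)=2P$ is exact (the degree of $F_M$ is $M-1<2N$, using $N\ge M$), and $\|\K^P_M\|_{L^1(I_P)}\le C\|\K\|_{L^1(\R)}$ for an absolute constant $C$, formula \eqref{cesaro} yields $\sum_{j\in\I}\Delta x\,|\sigma^P_{M,j}|\le\|\K^P_M\|_{L^1(I_P)}\le C\|\K\|_{L^1(\R)}$, uniformly in $N,N_T,M,P$; hence $\max_i|\lambda^M_i[w]|\le C\|\K\|_{L^1(\R)}\max_i|w_i|$ and $TV(\lambda^M_\cdot[w])\le C\|\K\|_{L^1(\R)}\,TV(w)$ for every grid vector $w$. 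Second, using $\lambda^M_i[\cdot]_+-\lambda^M_i[\cdot]_-=\lambda^M_i[\cdot]$, the scheme \eqref{numerical compacted scheme} rewrites as
\begin{align*}
\frac{u^{P,n+1}_i-u^{P,n}_i}{\Delta t}=\lambda^M_i[u^{P,n+1}]_+\bigl(\theta^{P,n}_{i+1/2}+L^P\bigr)-\lambda^M_i[u^{P,n+1}]_-\bigl(\theta^{P,n}_{i-1/2}+L^P\bigr),
\end{align*}
so that, setting $b^n_i:=\theta^{P,n}_{i+1/2}+L^P$, the discrete gradient obeys the three-point formula $b^{n+1}_i=c_i b^n_i+d_i b^n_{i+1}+e_i b^n_{i-1}$ with $c_i=1-\frac{\Delta t}{\Delta x}(\lambda^M_{i+1}[u^{P,n+1}]_-+\lambda^M_i[u^{P,n+1}]_+)$, $d_i=\frac{\Delta t}{\Delta x}\lambda^M_{i+1}[u^{P,n+1}]_+$, $e_i=\frac{\Delta t}{\Delta x}\lambda^M_i[u^{P,n+1}]_-$, satisfying $c_i+d_{i-1}+e_{i+1}=1$ and $c_i+d_i+e_i=1+\frac{\Delta t}{\Delta x}(\lambda^M_{i+1}[u^{P,n+1}]-\lambda^M_i[u^{P,n+1}])$.

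\textbf{Well-posedness, $L^\infty$, positivity of the gradient, and TV.} For the step $n\to n+1$ I would solve for $u^{P,n+1}$ by Banach's fixed point theorem applied to the map $w\mapsto S(w)$, $S(w)_i=u^{P,n}_i+\Delta t[\lambda^M_i[w]_+\theta^{P,n}_{i+1/2}-\lambda^M_i[w]_-\theta^{P,n}_{i-1/2}+L^P\lambda^M_i[w]]$, on the ball $B_{n+1}=\{w:\max_i|w_i|\le\|u^P_0\|_{L^\infty(I_P)}e^{10(L^P)t_{n+1}\|\K\|_{L^1(\R)}}\}$. Writing $S(w)_i=(1-\frac{\Delta t}{\Delta x}|\lambda^M_i[w]|)u^{P,n}_i+\frac{\Delta t}{\Delta x}\lambda^M_i[w]_+u^{P,n}_{i+1}+\frac{\Delta t}{\Delta x}\lambda^M_i[w]_-u^{P,n}_{i-1}+\Delta t L^P\lambda^M_i[w]$, condition \eqref{delta t over delta x} makes the first three terms a convex combination of values of $u^{P,n}$, hence bounded by $\|u^{P,n}\|_{\ell^\infty}$, while \eqref{delta t} controls $\Delta t L^P|\lambda^M_i[w]|$; a short computation with the induction bound on $\|u^{P,n}\|_{\ell^\infty}$ and $e^{-x}\le 1-x+x^2/2$ gives $S(B_{n+1})\subset B_{n+1}$. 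The estimate $|S(w)_i-S(w')_i|\le(C'\frac{\Delta t}{\Delta x}\|\K\|_{L^1(\R)}\|u^{P,n}\|_{\ell^\infty}+C'\Delta t L^P\|\K\|_{L^1(\R)})\max_i|w_i-w'_i|$ (using that $x\mapsto|x|$ and $x\mapsto x_\pm$ are $1$-Lipschitz) is a strict contraction under \eqref{delta t}--\eqref{delta t over delta x}, so the unique fixed point $u^{P,n+1}\in B_{n+1}$ satisfies \eqref{Linftybounddis} at level $n+1$. Positivity of $b^{n+1}_i$ is immediate from the three-point formula: $c_i\ge 0$ by \eqref{delta t over delta x}, $d_i,e_i\ge 0$, and $b^n_\cdot\ge 0$ by induction. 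For \eqref{TVestim}, subtract $L^P$ from the three-point formula, take absolute values, sum over $i\in\I$, use $c_i+d_{i-1}+e_{i+1}=1$, and multiply by $\Delta x$ to get $TV(u^{P,n+1})\le TV(u^{P,n})+L^P\Delta t\sum_i|\lambda^M_{i+1}[u^{P,n+1}]-\lambda^M_i[u^{P,n+1}]|\le TV(u^{P,n})+C L^P\Delta t\|\K\|_{L^1(\R)}\,TV(u^{P,n+1})$; absorbing the last term by \eqref{delta t} and iterating yields \eqref{TVestim}.

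\textbf{Discrete gradient entropy.} This is the heart of the theorem and, following \cite{2}, reproduces at the discrete level the formal computation of Section~\ref{subsec.gredentperio}. The function $f$ in \eqref{1.deff} is convex and $C^1$ on $\R_+$ with $0\le f''\le e$, so a second-order Taylor expansion gives $f(b^{n+1}_i)\le f(b^n_i)+f'(b^n_i)(b^{n+1}_i-b^n_i)+\tfrac{e}{2}(b^{n+1}_i-b^n_i)^2$; I would multiply by $\Delta x$, sum over $i$, and treat the three contributions as follows. In the first-order term, insert $b^{n+1}_i-b^n_i=\frac{\Delta t}{\Delta x}[(G_{i+1}-G_i)-(H_{i+1}-H_i)]$ with $G_i=\lambda^M_i[u^{P,n+1}]_+b^n_i$, $H_i=\lambda^M_i[u^{P,n+1}]_-b^n_{i-1}$, and perform a discrete summation by parts. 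Decomposing the upwind flux as a centered flux plus a difference, and using that $f'$ is nondecreasing, isolates a nonpositive numerical-viscosity term; the centered remainder is a discrete version of $\int_{I_P}\pa_x v\,(\sigma^P_M\ast\pa_x v)$, which is nonpositive by the discrete Parseval identity (no aliasing since $\deg\sigma^P_M=M-1<2N$) together with Lemma~\ref{fourier series of kpdelta negative} ($c_m(\sigma^P_M)\le 0$). The second-order Taylor term is nonnegative but, under \eqref{delta t}--\eqref{delta t over delta x}, is dominated by the numerical-viscosity term. The remaining residuals — those produced by the IMEX lag between $\theta^{P,n}$ in the flux and $\lambda^M_i[u^{P,n+1}]$ in the velocity, and by replacing the singular entropy density $g$ of \eqref{1.defg} by the truncated $f$ — I would bound, per time step, by a constant times $\Delta t$ times quantities controlled uniformly by $TV(u^{P,n+1})$, using the conservation identity $\sum_i\Delta x\,b^n_i=v_0(P)-v_0(-P)$ and the bound \eqref{TVestim} just established at level $n+1$; this gives a per-step residual $\le\zeta_0'\,\Delta t\,TV(u^P_0)e^{10(L^P)T\|\K\|_{L^1(\R)}}$, and summing over $1\le n\le N_T$ with $N_T\Delta t=T$ produces the additive constant $\zeta_0\,TV(u^P_0)$ with $\zeta_0$ depending only on $T$ and $\|\K\|_{L^1(\R)}$.

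\textbf{Main obstacle.} I expect the entropy estimate to be by far the hardest part. The delicate points are: (i) carrying out the discrete summation by parts so that the principal part of the first-order term is genuinely the nonpositive quadratic form in the discrete Fourier coefficients of the gradient, despite the upwind flux splitting and the time-level mismatch inherent to the semi-explicit scheme; (ii) verifying that the forward-Euler anti-dissipation $\tfrac{e}{2}\sum_i\Delta x(b^{n+1}_i-b^n_i)^2$ is absorbed by the upwind numerical viscosity for $\Delta t/\Delta x$ as in \eqref{delta t over delta x}; and (iii) making every residual bound uniform in all four parameters $N,N_T,M,P$ — which is precisely why the scheme is built from the regularized kernel $\sigma^P_M$ and uses definition \eqref{cesaro} rather than \eqref{def.alternative}.
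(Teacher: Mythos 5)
Your treatment of well-posedness, the $L^\infty$ bound, the nonnegativity of $\theta^{P,n}_{i+1/2}+L^P$ and the total variation estimate is correct and essentially identical to the paper's: same fixed-point map, same convex-combination rewriting \eqref{theta_convexcombin}, same telescoping by periodicity, same absorption of the $L^P\Delta t\sum_i|\lambda^M_{i+1}-\lambda^M_i|$ term via \eqref{delta t}. The divergence, and the problem, is in the discrete gradient entropy estimate, where you replace the paper's argument by a second-order Taylor expansion of $f$ plus summation by parts, and this route has a genuine gap at exactly the step you defer.

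The gap is twofold. First, to turn the first-order term $\sum_i\Delta x\,f'(b^n_i)(b^{n+1}_i-b^n_i)$ into the nonpositive Parseval quadratic form you need the discrete analogue of the cancellation $g''(y)\,y=1$; with the truncated density $f$ of \eqref{1.deff} this cancellation holds only on $\{b\ge 1/e\}$ and, after the mean-value step $f'(b^n_{i+1})-f'(b^n_i)\approx f''(\xi)(b^n_{i+1}-b^n_i)$, only up to errors quadratic in the increments $b^n_{i+1}-b^n_i=\theta^{P,n}_{i+3/2}-\theta^{P,n}_{i+1/2}$. Controlling $\sum_i|\theta^{P,n}_{i+3/2}-\theta^{P,n}_{i+1/2}|$ is a BV bound on the discrete \emph{gradient}, i.e.\ a second-difference estimate on $u^{P,n}$, which is not provided by \eqref{TVestim} and is not propagated by the scheme; the same missing estimate is what you would need to dominate the anti-dissipation $\tfrac{e}{2}\sum_i\Delta x(b^{n+1}_i-b^n_i)^2$ by the upwind viscosity. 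Second, even granting the cancellation, your ``centered remainder'' is the \emph{bilinear} form $\Delta t\sum_i\Delta x\,\theta^{P,n}_{i+1/2}\,(\bar\sigma^P_M\ast\theta^{P,n+1})_i$ — gradient at the explicit level, velocity at the implicit level — and discrete Parseval together with Lemma~\ref{fourier series of kpdelta negative} gives nonpositivity only for the diagonal form $\sum_j c^d_j(\bar\sigma^P_M)|c^d_j(\theta)|^2$ with the \emph{same} $\theta$ on both sides. You flag this time-level mismatch as a delicate point but do not resolve it, and it is not a technicality: an unsigned cross term of size $O(\Delta t)\cdot\|\theta^n-\theta^{n+1}\|$ per step does not obviously sum to an $O(1)$ constant. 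The paper avoids both issues at once with Lemmas~\ref{technical} and~\ref{convexity inequality of f}: applying the generalized Jensen inequality directly to the exact sub-convex combination $b^{n+1}_i=a_1b^n_{i+1}+a_2b^n_{i-1}+a_3b^n_i$ (with $a_1+a_2+a_3=1-\mu^{n+1}_{i+1/2}\in(0,2)$) produces, with no Taylor remainder, a correction $g(\theta^{P,n+1}_{i+1/2}+L^P,1-\mu)\ln(1-\mu)$ whose deviation from $-(\theta^{P,n+1}_{i+1/2}+L^P)\mu$ is bounded by the universal constant of Lemma~\ref{technical} and hence by $C\Delta t\,TV(u^{P,n+1})$ (the term $J_1$), while the remaining piece $J_3$ is exactly the diagonal quadratic form in $\theta^{P,n+1}$ to which Parseval applies. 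If you want to keep your Taylor-based route you would have to either establish a uniform discrete BV bound on $\theta^{P,n}$ (not available here) or restructure the expansion so that only the TV of $u^{P,n+1}$ and the diagonal form in $\theta^{P,n+1}$ appear — which is precisely what the sub-convex-combination lemma accomplishes.
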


The proof of Theorem~\ref{thm.scheme} is done in Section~\ref{sec.studyscheme} in several steps. First we prove the existence of a unique bounded solution, satisfying~\eqref{TVestim}, to the scheme~\eqref{schemeIC}--\eqref{h(x)} thanks to Banach's fixed point theorem. Then, we adapt at the discrete level the computations of Section~\ref{subsec.gredentperio} to show the discrete gradient entropy estimate~\eqref{gradentdis}.

\begin{remark}
Let us notice that all the properties stated in Theorem~\ref{thm.scheme} hold under the ``uniform'' CFL like conditions~\eqref{delta t}--\eqref{delta t over delta x}. However, thanks to an adaptive time stepping strategy, these conditions on the sizes of the meshes can be relaxed. More precisely, we first define $t_0 = 0$ and
\[
0< \Delta t_0 < \dfrac{1}{10 (L^P) \|\K\|_{L^1(\R)}}.
\]
Then, we introduce a sequence of points $(t_{n+1})_{n\geq 0}$ with $t_{n+1} = t_n + \Delta t_{n+1}$ where $\Delta t_{n+1}$ satisfies the conditions
\begin{align*}
\Delta t_{n+1} < \min\left(\Delta t_n, \dfrac{1}{10 (L^P) \|\K\|_{L^1(\R)} \left(\|u^{P,n}\|_\infty+1\right)}\right),
\end{align*}
and
\begin{align*}
\dfrac{\Delta t_{n+1}}{\Delta x} < \min\left(\dfrac{\Delta t_n}{\Delta x}, \dfrac{1}{10 \|\K\|_{L^1(\R)} \left(\|u^{P,n}\|_\infty+1\right)}\right),
\end{align*}
with $\|u^{P,n}\|_\infty = \max_{i \in \I} |u^{P,n}_i|$. Under these conditions and adapting the proof of Theorem~\ref{thm.scheme}, one can prove the well-posedness of the scheme~\eqref{schemeIC}--\eqref{h(x)} (with $\Delta t$ replaced by $\Delta t_{n+1}$), the nonnegativity of $\theta^{P,n}_{i+1/2}+L^P$ for any $i \in \I$ and the following (non uniform) $L^\infty$ estimate $\|u^{P,n+1}\|_\infty \leq 2 \|u^{P,n}\|_\infty$.
\end{remark}

In our second main result we prove the convergence of the scheme~\eqref{schemeIC}--\eqref{h(x)}. In order to precisely state this result, we introduce some notation. Let the assumptions of Theorem~\ref{thm.scheme} hold. Then, for any $\Delta x$ and $\Delta t$ (satisfying~\eqref{delta t}--\eqref{delta t over delta x}), we introduce the quantity $\eps := (\Delta x,\Delta t)$, and the following function:
\begin{align*}
    u^{P,\eps}_M(x_i,t_n) = u^{P,n}_i, \quad \forall i \in \I, \, 0\leq n \leq N_T,
\end{align*}
where of course $u^{P,n}$ is the unique solution of the scheme~\eqref{schemeIC}--\eqref{h(x)} obtained in Theorem~\ref{thm.scheme}. We also define, for any $(x, t) \in [x_i, x_{i + 1}] \times [t_n, t_{n+1}]$, its so-called $Q^1$ extension, still denoted $u^{P,\eps}_M$, as 
\begin{multline}\label{defQ1ext}
    u^{P, \eps}_M (x,t) := \bigg(\frac{t - t_n}{\Delta t}\bigg) \bigg\{\bigg(\frac{x - x_i}{\Delta x}\bigg) u_{i + 1} ^{P, n + 1} + \bigg(1 - \frac{x - x_i}{\Delta x}\bigg) u_i^{P, n + 1} \bigg\} \\ +\bigg(1 - \frac{t - t_n}{\Delta t}\bigg)\bigg\{\bigg(\frac{x - x_i}{\Delta x}\bigg) u_{i + 1}^{P, n} + \bigg(1 - \frac{x - x_i}{\Delta x}\bigg) u_i^{P, n }\bigg\},
\end{multline}
so that $u^{P,\eps}_M(x_i,t_n)=u^{P,n}_i$. In the sequel, we will consider a sequence $\eps_m=(\Delta x_m, \Delta t_m)$ such that $\eps_m \to (0,0)$ as $m \to \infty$, where $\Delta x_m$ and $\Delta t_m$ satisfy~\eqref{delta t}--\eqref{delta t over delta x} for all $m \in \mathbb{N}$. Moreover, we will denote the sequence of associated reconstruction function $(u^{P,m}_M)_{m\in\N}$ defined through
\begin{align*}
    u^{P,m}_M(x,t) = u^{P, \eps_m}_M(x,t), \quad \forall (x,t) \in I_P \times (0,T).
\end{align*}
Then, we claim that the sequence $(u^{P,m}_M)_{m \in \mathbb{N}}$ converges toward a solution of~\eqref{1.equPaux}, when $m\to+\infty$, in the distribution sense. More precisely, we have the

\begin{theorem}[Convergence of the scheme]\label{convergence in periodic} 
Assume that the assumptions of Theorem~\ref{thm.scheme} hold. Let $\eps_m=(\Delta x_m, \Delta t_m)$ be a sequence such that $\eps_m \to (0,0)$ as $m \to +\infty$, where $\Delta x_m$ and $\Delta t_m$ satisfy~\eqref{delta t}--\eqref{delta t over delta x} for all $m \in \mathbb{N}$. Let $(u^{P,m}_M)_{m\in\N}$ be a family of solution to the scheme~\eqref{schemeIC}--\eqref{h(x)}. Then, there exists a function $u^P_M$ with
\begin{equation}\label{first property}
     u^P_M \in L^{\infty}(I_P\times (0, T)) \cap C(0, T; L \log L(I_P)), \quad \partial_x u^P  \in L^{\infty}(0, T;  L \log L(I_P)),
 \end{equation}
such that the family $(u^{P,m}_M)_{m\in\N}$ converges uniformly, up to a subsequence, to $u^P_M$ as $m\to +\infty$.
Moreover, $u^P_M$ is a solution of~\eqref{1.equPaux} in the distributional sense, and satisfies the following
\begin{itemize}
\item {\bf $L^{\infty}$ estimate}:
\begin{equation}\label{second property}
     \|u^P_M \|_{L^{\infty}(I_P \times(0, T))} \leq e^{10(L^P) T ||\mathcal{K}||_{L^1(\mathbb{R})}} ||u_0^P||_{L^{\infty}(I_P)}.
\end{equation}
\item {\bf Gradient entropy estimate}: 
\begin{equation}\label{third property}
   \|\partial_x u^P_M \|_{L^{\infty}(0, T; L \log L (I_P) )} + \|\partial_t u^P_M \|_{L^{\infty}(0, T; L \log L (I_P) )} \leq C_{\rm GE}, 
\end{equation}
where $C_{\rm GE}=C_{\rm GE}\left(\|v_0 \|_{L^{\infty}}, T, ||\mathcal{K}||_{L^1(\mathbb{R})}\right) > 0$.\smallskip
 \item {\bf Continuity of the solution}: the solution $u^P_M$ belongs to $C(I_P\times [0,T))$ and there exists a modulus of continuity $\omega(\gamma,h)$ such that for all $\gamma, h \geq 0$ and all $(x, t) \in I_P \times (0, T-h )$, we have:
 \begin{align}\label{mod.cont.period}
|u^P_M (x + \gamma,t+h) - u^P_M (x,t)| \leq 6 C_{\rm GE}\, \omega(\gamma,h), \quad \mbox{with } \omega(\gamma, h) =\frac{1}{\ln(1+ \frac{1}{\gamma})}+ \frac{1}{\ln(1 + \frac{1}{h})},
\end{align}
where $C_{\rm GE}$ is the constant in~\eqref{third property}.
 \end{itemize}
\end{theorem}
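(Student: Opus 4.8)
## Proof plan for Theorem~\ref{convergence in periodic}

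\textbf{Overview of the strategy.} The proof is a compactness argument built on the uniform estimates provided by Theorem~\ref{thm.scheme}. The plan is to first extract uniform bounds for the reconstructed functions $u^{P,m}_M$ and their discrete derivatives from \eqref{Linftybounddis}, \eqref{TVestim} and \eqref{gradentdis}, translate the discrete gradient entropy estimate into an $L^\infty(0,T;L\log L(I_P))$ bound on $\partial_x u^{P,m}_M$ and (via the equation) on $\partial_t u^{P,m}_M$, then derive an equicontinuity modulus from these $L\log L$ bounds so that Arzelà–Ascoli yields uniform convergence along a subsequence to a limit $u^P_M$. Finally one passes to the limit in the weak (distributional) formulation of the scheme \eqref{numerical compacted scheme}–\eqref{h(x)} to identify $u^P_M$ as a distributional solution of \eqref{1.equPaux}.

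\textbf{Step 1: uniform bounds and the $L\log L$ estimate.} First I would record that \eqref{Linftybounddis} gives the $L^\infty$ bound \eqref{second property} uniformly in $\eps_m$, hence for the $Q^1$ extension as well since it is a convex combination of nodal values. Next, the crucial point: apply Lemma~\ref{Llog L estimate} to convert the discrete entropy sum $\sum_i \Delta x\, f(\theta^{P,n}_{i+1/2}+L^P)$, controlled by \eqref{gradentdis} in terms of the initial data, into a bound on $\|\partial_x u^{P,m}_M(\cdot,t)\|_{L\log L(I_P)}$ uniform in $t$ and in $m$; here one uses that $\theta^{P,n}_{i+1/2}+L^P\geq 0$ (Theorem~\ref{thm.scheme}) so that $f$ behaves like $g(x)=x\ln x$ up to the harmless additive constant, together with the $L^1$ (total variation) bound \eqref{TVestim}. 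Then, using the scheme \eqref{numerical compacted scheme}, $\partial_t u^{P,m}_M$ is (a piecewise version of) $\lambda^M_i[u^{P,n+1}](\theta^{P,n}_{i\pm1/2}+L^P)$; since $|\lambda^M_i[\cdot]|\leq \|\sigma^P_M\|_{L^1}\|u^{P,n+1}\|_\infty \leq \|\K\|_{L^1(\R)}\|u^{P,m}_M\|_\infty$ (the key property of the Cesàro mean, $\|\sigma^P_M\|_{L^1(I_P)}\leq\|\K^P_M\|_{L^1}\leq\|\K\|_{L^1(\R)}$), the velocity is bounded uniformly, so the $L\log L$ bound on $\theta^{P,n}_{i\pm1/2}+L^P$ transfers to $\partial_t u^{P,m}_M$, giving \eqref{third property} with $C_{\rm GE}$ depending only on $\|v_0\|_{L^\infty}$, $T$ and $\|\K\|_{L^1(\R)}$ (invoking Lemma~\ref{lem.init.perio.to.nonperiod} to bound the initial entropy and total variation by $\|v_0\|$-dependent constants).

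\textbf{Step 2: equicontinuity and compactness.} From the $L\log L$ bound on $\partial_x u^{P,m}_M$ one obtains a spatial modulus of continuity of logarithmic type: for $f\in L\log L$ with $\|f\|_{L\log L}\leq A$, $\int_x^{x+\gamma}|f|\,\dd y \lesssim A/\ln(1+1/\gamma)$ (Jensen on the convex function $s\mapsto s\ln(e+s)$, as in~\cite{1}), hence $|u^{P,m}_M(x+\gamma,t)-u^{P,m}_M(x,t)|\leq C_{\rm GE}/\ln(1+1/\gamma)$ up to the $Q^1$ interpolation error which is of the same order; likewise in time using the $L\log L$ bound on $\partial_t u^{P,m}_M$. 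This yields the modulus $\omega(\gamma,h)$ of \eqref{mod.cont.period}, uniformly in $m$, together with the uniform $L^\infty$ bound; Arzelà–Ascoli on compact subsets of $I_P\times[0,T)$ (note $I_P$ is already compact) then gives a subsequence converging uniformly to some $u^P_M\in C(I_P\times[0,T))$ satisfying \eqref{first property}, \eqref{second property}, \eqref{third property} and \eqref{mod.cont.period} by passing to the limit in the estimates (lower semicontinuity of the $L\log L$ norm under a.e. convergence for \eqref{third property}).

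\textbf{Step 3: passing to the limit in the scheme.} Fix $\varphi\in C^\infty_c(I_P\times[0,T))$, multiply \eqref{numerical compacted scheme} by $\Delta x\,\Delta t\,\varphi(x_i,t_n)$ and sum over $i\in\I$, $n$; perform discrete integration by parts (Abel summation) to move differences onto $\varphi$. The linear terms converge to $\int\!\!\int \partial_t u^P_M\,\varphi$ and the transport/velocity terms to $\int\!\!\int(\sigma^P_M\ast u^P_M)(\partial_x u^P_M+L^P)\varphi$ using: uniform convergence of $u^{P,m}_M$, convergence of the discrete convolution $\lambda^M_i[u^{P,m}_M]\to\sigma^P_M\ast u^P_M$ (Riemann sums of a fixed continuous — indeed trigonometric-polynomial — kernel $\sigma^P_M$ against a uniformly convergent sequence), and weak-$\star$ convergence of $\theta^{P,m}_{i\pm1/2}+L^P$ to $\partial_x u^P_M+L^P$ in, say, $L^2$ paired against the strongly convergent velocity times $\varphi$ (here one needs $\theta^{P,m}_{i\pm1/2}$ and $\theta^{P,m}_{i\mp1/2}$ to have the same limit, which follows from the $\Delta x\to0$ limit and the TV bound). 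The terms $\lambda^M_i[\cdot]_\pm$ require care: since the limit velocity need not have a sign, one writes $\lambda_+\theta_{i+1/2}-\lambda_-\theta_{i-1/2} = \lambda\,\theta_{i+1/2} - \lambda_-(\theta_{i-1/2}-\theta_{i+1/2})$ and shows the correction term vanishes in the limit using the TV bound $\times\,\Delta x$.

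\textbf{Main obstacle.} The delicate point is Step~3, the passage to the limit in the nonlinear product $(\text{velocity})\times(\text{discrete gradient})$ with a sign-changing velocity and an upwind (hence non-symmetric) discretization: one must simultaneously exploit strong (uniform) convergence of $u^{P,m}_M$ and only weak convergence of its discrete gradient, and control the mismatch between the upwind values $\theta^{P,n}_{i+1/2}$ and $\theta^{P,n}_{i-1/2}$. The regularity of the fixed kernel $\sigma^P_M$ (a trigonometric polynomial of degree $<M$, independent of $m$) is what rescues the convolution term. A secondary technical point, already flagged, is the careful use of Lemma~\ref{Llog L estimate} to obtain the $L\log L$ bound from the truncated entropy $f$ rather than $g$, accounting for the region $\{\theta+L^P<1/e\}$ where $f$ vanishes.
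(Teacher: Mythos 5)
Your plan follows essentially the same route as the paper's proof: the uniform $L^\infty$/TV bounds and the discrete gradient entropy estimate are upgraded via Lemma~\ref{Llog L estimate} to $L\log L$ bounds on $\partial_x u^{P,m}_M$ and $\partial_t u^{P,m}_M$, these yield the logarithmic modulus of continuity and Arzel\`a--Ascoli compactness, and the limit is identified by pairing the weakly convergent discrete gradient against the strongly (uniformly) convergent nonlocal velocity, with the upwind mismatch controlled by the TV bound. One small correction: the discrete gradient is only weakly compact in $L^1$ (Dunford--Pettis, using the superlinear entropy $f$ as in Proposition~\ref{weak star convergence}), not in $L^2$ as suggested in your Step~3 --- but weak $L^1$ convergence is all the pairing against the uniformly convergent velocity requires, so the argument is unaffected.
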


The proof of Theorem~\ref{convergence in periodic} is done in Section~\ref{convergence}. We first establish  $\eps$-uniform estimates on the function $u^{P,\eps}_M$ given by~\eqref{defQ1ext}. Then, these estimates yields enough compactness properties on the sequence $(u^{P,m}_M)$ in order to obtain the existence of a function $u^P_M$ satisfying~\eqref{first property}--\eqref{mod.cont.period}. Finally, in Section~\ref{sec.identi}, we identify $u^P_M$ as a solution to~\eqref{1.equPaux} in the distributional sense.

\begin{remark}\label{rem.constansMP}
The proof of Proposition~\ref{prop.PMinfty} also relies on compactness methods. Of course in this case we need to establish uniform w.r.t.~$P$ and $M$ estimates. In fact these estimates are consequences of~\eqref{third property}. In particular, we will justify in Section~\ref{sec.unifestimates} that the constant $C_{\rm GE}$ is independent of $P$ and $M$.
\end{remark}

\section{Study of the numerical scheme}\label{sec.studyscheme}

In this section we prove Theorem~\ref{thm.scheme}. For this purpose, we split our proof in two main steps. Indeed, in Section~\ref{sec.wellpos}, we prove the well-posedness of the scheme, while in Section~\ref{sec.disgradent}, we prove the discrete gradient entropy inequality~\eqref{gradentdis}. But first, we establish a technical result.

\begin{lemma}\label{kpdelta less than k}
Let assumption~\textbf{\emph{(H3)}} holds and assume that $\K \in L^1(\R)$. Then, $\K^P_M$ given by~\eqref{1.defKP} satisfies $\K^P_M \in L^1(I_P)$ with
\begin{align}\label{normKPM}
    \|\mathcal{K}^p_M\|_{L^1(I_P)} &\leq 5 \|\mathcal{K}\|_{L^1(\mathbb{R})}.
\end{align}
Moreover, it holds
  \begin{align}\label{Normcesaro}
     \sum_{j \in \I} \Delta x \, \left|\sigma_{M,j}^P\right| \leq 5 \|\mathcal{K}\|_{L^1(\mathbb{R})}.
    \end{align}
\end{lemma}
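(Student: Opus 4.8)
The plan is to prove the two bounds \eqref{normKPM} and \eqref{Normcesaro} in sequence, with the first one being essentially a triangle-inequality estimate and the second following from it via the convolution structure in \eqref{cesaro}.

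First I would estimate $\|\K^P_M\|_{L^1(I_P)}$. By the definition \eqref{1.defKP} of $\K^P_M$ and the triangle inequality on $I_P$,
\begin{align*}
\|\K^P_M\|_{L^1(I_P)} \leq \|\K^P\|_{L^1(I_P)} + \frac{2}{P}\left(\int_{|x|\geq P}|\K(x)|\,\dd x\right)\int_{I_P}|F_{2M}(x)|\,\dd x.
\end{align*}
Here $\|\K^P\|_{L^1(I_P)} = \int_{-P}^{P}|\K^P(x)|\,\dd x = \int_{-P}^{P}|\K(x)|\,\dd x \leq \|\K\|_{L^1(\R)}$ by the definition of $\K^P$. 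The Fej\'er kernel $F_{2M}$ is nonnegative, so $\int_{I_P}|F_{2M}(x)|\,\dd x = \int_{I_P}F_{2M}(x)\,\dd x = 2P$ (since $c_0(F_{2M})=1$ with the normalization of \eqref{cesaro}, i.e. $\frac{1}{2P}\int_{I_P}F_{2M}=1$). Thus the second term is at most $\frac{2}{P}\cdot 2P\int_{|x|\geq P}|\K(x)|\,\dd x = 4\int_{|x|\geq P}|\K(x)|\,\dd x \leq 4\|\K\|_{L^1(\R)}$, giving $\|\K^P_M\|_{L^1(I_P)} \leq 5\|\K\|_{L^1(\R)}$. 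One should double-check the precise normalization constant of $F_{2M}$ against \eqref{Fejer kernel}, since the factor $\frac{1}{2P}$ in \eqref{cesaro} suggests $\int_{I_P}F_M = 2P$; this bookkeeping is the only place a constant could drift.

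Next, for \eqref{Normcesaro}, recall $\sigma^P_{M,j}=\sigma^P_M(x_j)$ and $\sigma^P_M = \frac{1}{2P}(F_M\ast\K^P_M)$. The Riemann sum $\sum_{j\in\I}\Delta x\,|\sigma^P_{M,j}|$ approximates $\int_{I_P}|\sigma^P_M(x)|\,\dd x$, but to get a clean inequality rather than an approximation I would bound pointwise: for each $j$,
\begin{align*}
|\sigma^P_{M,j}| \leq \frac{1}{2P}\int_{I_P}F_M(x_j-y)\,|\K^P_M(y)|\,\dd y,
\end{align*}
using $F_M\geq 0$. Summing against $\Delta x$ and exchanging the sum and the integral,
\begin{align*}
\sum_{j\in\I}\Delta x\,|\sigma^P_{M,j}| \leq \frac{1}{2P}\int_{I_P}\left(\sum_{j\in\I}\Delta x\,F_M(x_j-y)\right)|\K^P_M(y)|\,\dd y.
\end{align*}
The inner sum is a Riemann sum for $\int_{I_P}F_M = 2P$; since $F_M$ is a nonnegative trigonometric polynomial of degree $<M$ and the grid has $2N\geq 2M$ points, this Riemann sum is in fact \emph{exact}, equal to $2P$ (the grid resolves all Fourier modes $e^{i\pi m x/P}$ with $|m|<M<N$ exactly, and the $m=0$ mode contributes $2P$ while all others sum to zero). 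Hence $\sum_{j\in\I}\Delta x\,|\sigma^P_{M,j}| \leq \frac{1}{2P}\cdot 2P\cdot\|\K^P_M\|_{L^1(I_P)} = \|\K^P_M\|_{L^1(I_P)} \leq 5\|\K\|_{L^1(\R)}$.

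The main obstacle I anticipate is the exactness of the discrete quadrature $\sum_{j\in\I}\Delta x\,F_M(x_j-y)=2P$: this is where the hypothesis $N\geq M$ from \textbf{(H3)} (and the setup of the scheme) is genuinely used, and it requires the standard fact that the trapezoidal/midpoint rule on a uniform grid of $2N$ points integrates trigonometric polynomials of degree $<2N$ exactly. If one prefers to avoid invoking exact quadrature, an alternative is to bound $F_M(x_j-y)$ crudely and accept a slightly worse constant, but then one must check it still fits under the factor $5$; the exact-quadrature route keeps everything tight. Everything else is routine: nonnegativity of the Fej\'er kernel, the definition of $\K^P$ as the periodized restriction of $\K$, and Fubini/Tonelli to swap the finite sum with the integral.
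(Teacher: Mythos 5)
Your proof is correct and follows essentially the same route as the paper: the triangle inequality together with $\|F_{2M}\|_{L^1(I_P)}=2P$ for the first bound, and the pointwise estimate $|\sigma^P_{M,j}|\leq \frac{1}{2P}(F_M\ast|\K^P_M|)(x_j)$ combined with the exact quadrature identity $\sum_{j\in\I}\Delta x\,F_M(x_j-y)=2P$ (valid since $N\geq M$) for the second. The normalization bookkeeping you flag checks out, and your identification of where \textbf{(H3)} enters matches the paper's argument.
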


\begin{proof}
Let us first prove~\eqref{normKPM}. For this purpose, using the definition~\eqref{1.defKP} of $\K^P_M$ and the classical result $\|F_{2M}\|_{L^1(I_P)}=2P$, we have
\begin{align*}
     \|\mathcal{K}^P_M\|_{L^1(I_P)} &\leq \|\K^P\|_{L^1(I_P)} + \frac{2\|\K\|_{L^1(\R)}}{P}\int_{-P}^P |F_{2M}(x)|\,\dd x = \|\K^P\|_{L^1(I_P)} + 4\|\K\|_{L^1(\R)},
\end{align*}
which yields estimate~\eqref{normKPM}. Now, applying relation~\eqref{cesaro} we have
\begin{align*}
    \sum_{j \in \I} \Delta x \, \left|\sigma_{M,j}^P\right| \leq \frac{\Delta x}{2P}  \int_{-P}^{P} \left( \sum_{j\in\I} F_M(x_j-x) \right)\, \left|\K^P_M(x)\right| \, \dd x.
 \end{align*}
 Furthermore, one can easily show that $\sum_{j \in \I} F_M(x_j-x) = 2N$ when $N\ge M$. Hence, since $\Delta x= P/N$, we get
 \begin{align*}
    \sum_{j \in \I} \Delta x \, \left|\sigma^P_{M,j}\right| \leq \frac{N \,\Delta x}{P}  \int_{-P}^{P} \left|\K^P_M(x)\right| \, \dd x = \|\K^P_M\|_{L^1(I_P)}.
 \end{align*}
 This leads to~\eqref{Normcesaro} thanks to~\eqref{normKPM}.
\end{proof} 

\begin{remark}\label{rem.alter1}
The estimate~\eqref{Normcesaro} still holds with the definition~\eqref{def.alternative} of $\sigma^P_{M,j}$. Indeed, we have
\begin{align*}
\sum_{j \in \I} \Delta x \, \left|\sigma_{M,j}^P\right| &\leq \frac{1}{2P} \sum_{i \in \I} \Bigg(\int_{x_{i-1/2}}^{x_{i+1/2}} |\K^P_M(x)| \, \dd x \Bigg)  \Bigg(\sum_{j \in \I} \Delta x \, F_M(x_j-x_i) \Bigg)\\
&\leq \frac{N \Delta x}{P} \sum_{i \in \I} \int_{x_{i-1/2}}^{x_{i+1/2}} |\K^P_M(x)| \, \dd x = \|\K^P_M\|_{L^1(I_P)}.
\end{align*}
Therefore, except for the discrete gradient entropy estimate (see Remark~\ref{rem.alter2}), the proof of Theorem~\ref{thm.scheme} below is unchanged.
\end{remark}

\subsection{Well-posedness of the scheme}\label{sec.wellpos}

Let us now prove the well-posedness of the scheme thanks to Banach's fixed point theorem. For this, we argue by induction. Thus, let $0\leq n \leq N_T-1$ be fixed, and let $u^{P,n} = (u^{P,n}_i)_{i\in\I}$ be a given vector such that 
\[
\max_{i\in\I}\, |u^{P,n}_i| \leq \alpha := \|u^P_0\|_{L^\infty(I_P)} \, e^{10 (L^P)T \|\mathcal{K}\|_{L^1(\mathbb{R})}}.
\]
We also assume that $\theta^{P,n}_{i+1/2}+L$ is nonnegative for any $i\in \I$ and that
\begin{align*}
    TV\left( u^{P,n}\right) \leq TV\left(u^P_0\right) \, e^{10 (L^P)T \, \|\K\|_{L^1(\R)}}.
\end{align*}

\subsubsection{Construction of a contraction mapping}

We introduce the following compact set:
\begin{align*}
    \mathcal{U}_\alpha := \left\{ v=(v_i)_{i \in \I} \,:\,  \max_{i\in\I} \,|v_i|\leq \alpha+1 \right\},
\end{align*}
and, for any $i \in \I$, we define the function $F_i \, : \mathcal{U}_\alpha \, \to \R$ such that
\begin{equation}\begin{array}{ll}\label{defFi}
    F_{i}(v) &= u_i^{P, n} + \frac{\Delta t}{\Delta x}\Big(\lambda^M_i[v]_+ \, (u_{i+1}^{P, n} - \displaystyle u_i^{P, n}) -\lambda^M_i[v]_- \, (u_{i} ^{P, n} - u_{i-1}^{P, n})\Big) +  \Delta t \, L^P \,\lambda^M_i[v], \\
    & \displaystyle = \left(1-\frac{\Delta t}{\Delta x}\left|\lambda^M_i[v]\right|\right) u_i^{P, n} + \frac{\Delta t}{\Delta x}\lambda^M_i[v]_+u_{i+1}^{P, n}+  \frac{\Delta t}{\Delta x}\lambda^M_i[v]_- u_{i-1}^{P, n}+ \Delta t \, L^P \,\lambda^M_i[v]. 
\end{array}\end{equation} 
We notice that the solution, denoted $u^{P,n+1}$, of the scheme~\eqref{schemeIC}--\eqref{h(x)} 
at step $n+1$  is given by
\begin{align*}
    u^{P,n+1}_i = F_i(u^{P,n+1}) \quad \forall i \in \I.
\end{align*}
Let us now prove that the range of $F_i$ is included in $[-\alpha-1,\alpha+1]$. To do so, we first notice, thanks to definition~\eqref{h(x)} and Lemma~\ref{kpdelta less than k}, that for any $v \in \mathcal{U}_\alpha$, we have
\begin{align}
    \left|\lambda_i^M[v]\right| \leq (\alpha+1) \sum_{j \in \I} \Delta x \, \left|\sigma_{M,j}^P\right| \leq 5(\alpha+1) \|\K\|_{L^1(\R)}, \quad \forall i \in \I.
\end{align}
Hence, we conclude using the CFL condition~\eqref{delta t over delta x} that 
\begin{align*}
|F_i(v)| \leq \alpha + \Delta t \, L^P \, 5 \, (\alpha+1) \|\K\|_{L^1(\R)}, \quad \forall i \in \I.
\end{align*}
Thanks to the conditions~\eqref{delta t} we conclude that $F_i(v) \in [-\alpha-1,\alpha+1]$ for any $i \in \I$. We now define the map $F : \mathcal{U}_\alpha \to \mathcal{U}_\alpha$ such that $F(v)=(F_i(v))_{i\in\I}$ where $F_i(v)$ is given by~\eqref{defFi}. Let us prove that $F$ is a contraction mapping on $\mathcal{U}_\alpha$. For this purpose, let $v$ and $w \in \mathcal{U}_\alpha$. Then, for any $i \in \I$, we have
\begin{align*}
    \left|F_i(v) - F_i(w)\right| \leq \left(15\alpha\frac{\Delta t}{\Delta x} \|\K\|_{L^1(\R)}
     + 5\Delta t \, L^P \,  \|\K\|_{L^1(\R)} \right) \, \max_{i \in \I} |v_i-w_i|.
\end{align*}
It remains to apply the conditions~\eqref{delta t} and~\eqref{delta t over delta x} and Banach's fixed point theorem to deduce the existence and uniqueness of $u^{P,n+1}\in \mathcal{U}_\alpha$ solution to the scheme.

\subsubsection{Discrete $L^\infty$ estimate} As a by-product of the previous step, we have
\begin{align*}
    \max_{i \in \I} \, |u^{P,n+1}_i| \leq \alpha+1.
\end{align*}
However, let us show that in fact $u^{P,n+1}$ satisfies the sharpest $L^\infty$ bound~\eqref{Linftybounddis}, i.e., $|u^{P,n+1}_i| \leq \alpha$ for any $i \in \I$. We first rewrite the scheme~\eqref{numerical compacted scheme}, for any $i \in \I$, as
{\small\begin{multline*}
  u_i^{P, n + 1} = 
 \left(1 - \frac{\Delta t}{\Delta x}\left|\lambda^M_i[ u^{P, n + 1}]\right|\right) u_i^{P, n}+  \, \frac{\Delta t}{\Delta x}\lambda^M_i[ u^{P, n + 1}]_+ u_{i + 1}^{P, n}+  \, \frac{\Delta t}{\Delta x}\lambda^M_i[ u^{P, n + 1}]_- u_{i - 1}^{P, n} + \Delta t L^P\lambda^M_i[ u^{P, n + 1}].
\end{multline*}}
Thanks to this decomposition and the CFL condition~\eqref{delta t over delta x} we get
\begin{align*}
    |u^{P,n+1}_i|\leq \max_{i\in \I} \, |u^{P,n}_i|+ \Delta t \, L^P \, \left|\lambda^M_i[u^{P,n+1}]\right|.
\end{align*}
Hence, applying Lemma~\ref{kpdelta less than k} yields
  \begin{align*}
      |u_i^{P, n+ 1}| \leq \max_{i \in \I} \, |u_i^{P, n}| + 5L^P\, \Delta t \,\|\mathcal{K}\|_{L^1(\mathbb{R})} \, \max_{i\in\I} \, |u_i ^{P, n+ 1}|, \quad\mbox{for all}\quad 0\le n \le N_T-1
      \;\;\mbox{and} \;\; i \in \I.
  \end{align*}
Thereby, a direct induction leads to
\begin{align*}
     \max_{i \in \I}\, |u_i^{P, n+ 1}|
     &\leq  \frac{1}{(1 - 5 L^P \Delta t||\mathcal{K}||_{L^1(\mathbb{R})})^{n + 1}}\, \max_{i \in \I} \,|u_i^{P, 0}| \, \leq \, \frac{1}{(1 - 5 L^P \Delta t||\mathcal{K}||_{L^1(\mathbb{R})})^{n + 1}}\, \|u^P_0\|_{L^\infty(I_P)}.
\end{align*}
Therefore, thanks to condition~\eqref{delta t} and the classical inequality $-\ln(1-x)\leq x/(1-x)$ for $0\leq x < 1$, we deduce that $u^{P,n+1}$ satisfies estimate~\eqref{Linftybounddis}.

\subsubsection{Nonnegativity of the discrete gradient} We now prove that $\theta_{i + 1/2}^{P, n + 1} + L^P$ is nonnegative for any $i\in\I$. Using equation~\eqref{numerical compacted scheme}, we first write, for any $i \in \I$,
\begin{multline*}
\theta_{i +1/2}^{P, n + 1} + L^P= \bigg(1 - \frac{\Delta t}{\Delta x} \big( \lambda^M_i[u^{P, n + 1}]_+ +  \lambda^M_{i + 1} [u^{P, n + 1}]_-\big) \bigg) \theta_{i + 1/2}^{P, n } \\
+ \frac{\Delta t}{\Delta x} \lambda^M_{i + 1}[u^{P, n + 1}]_+ \,\theta_{i +3/2}^{P, n } + \frac{\Delta t}{\Delta x}  \lambda^M_i [u^{P, n + 1}]_- \, \theta_{i -1/2}^{P, n } \\ + L^P \bigg(\frac{\Delta t}{\Delta x}( \lambda^M_{i + 1} [u^{P, n + 1}] -  \lambda^M_i [u^{P, n + 1}]) + 1\bigg),
\end{multline*}
or equivalently,
\begin{multline}\label{theta_convexcombin}
\theta_{i +1/2}^{P, n + 1} + L^P=\bigg(1 - \frac{\Delta t}{\Delta x} \big( \lambda^M_i[u^{P, n + 1}]_+ + \lambda^M_{i + 1} [u^{P, n +1}]_-\big) \bigg) \left(\theta_{i +1/2}^{P, n } + L^P\right) \\
+ \frac{\Delta t}{\Delta x} \, \lambda^M_{i + 1} [u^{P, n + 1}]_+ \, \left(\theta_{i +3/2}^{P, n } + L^P\right) +\frac{\Delta t}{\Delta x} \,  \lambda^M_i[u^{P, n + 1}]_- \, \left(\theta_{i -1/2}^{P, n } + L^P\right).
\end{multline}
Therefore, the CFL condition~\eqref{delta t over delta x} implies that $\theta^{P,n+1}_{i+1/2}+L^P$ is a convex combination of nonnegative terms (since we assume by induction that $\theta^{P,n}_{i+1/2}+L^P \geq 0$ for any $i\in\I$). Hence, we conclude that
\begin{align*}
    \theta^{P,n+1}_{i+1/2}+L^P \geq 0,
\end{align*}
for any $i \in \I$.

\subsubsection{Discrete total variation estimate}

Let us now establish the discrete total variation estimate~\eqref{TVestim}. For this purpose, using equation~\eqref{numerical compacted scheme} and the  CFL condition~\eqref{delta t over delta x}, we have
      \begin{multline*}
        TV(u^{P, n+1}) \leq \sum_{i \in\I} \bigg(1 - \frac{\Delta t}{\Delta x}(\lambda^M_i[u^{P, n + 1}]_+ +  \lambda^M_{i + 1} [u^{P, n + 1}]_-)\bigg) \, \left|u_{i + 1}^{P, n} - u_i^{P, n}\right| \\ 
        + \frac{\Delta t}{\Delta x} \sum_{i\in\I} \lambda^M_{i + 1}[u^{P, n + 1}]_+ \, \left|u_{i + 2}^{P, n} - u_{i + 1}^{P, n}\right| + \frac{\Delta t}{\Delta x} \sum_{i\in\I} \lambda^M_i[u^{P, n+ 1}]_- \, \left|u_{i}^{P, n} - u_{i - 1}^{P, n}\right|\\
        + L^P \Delta t \sum_{i\in\I}  \left|\lambda_{i + 1}^M[u^{P, n + 1}] - \lambda^M_i[u^{P, n+ 1}]\right|.
  \end{multline*}
Now, we recombine the terms and we apply Lemma~\ref{kpdelta less than k} to the last term in the right hand side of the previous inequality, and we get
  \begin{multline*}
       TV(u^{P, n+1})\,(1-5L^P \Delta t \, \|\K\|_{L^1(\R)}) \leq TV(u^{P,n}) \\+  \frac{\Delta t}{\Delta x} \sum_{i\in\I} \bigg(\lambda^M_{i + 1} [u^{P, n + 1}]_+\, |u_{i + 2}^{P, n} - u_{i + 1}^{P, n}| - \lambda^M_i[u^{P, n + 1}]_+ \, |u_{i + 1} ^{P, n} - u_i ^{P, n}|\bigg)\\
       +  \frac{\Delta t}{\Delta x} \sum_{i\in\I} \bigg(\lambda^M_i[u^{P, n+ 1}]_- |u_i^{P, n} - u_{i - 1}^{P, n}| - \lambda^M_{i + 1} [u^{P, n + 1}]_-|u_{i + 1}^{P, n} - u_i^{P, n}|\bigg).
  \end{multline*}
Therefore, thanks to the periodicity of the vector $u^{P, n}$, we notice that the two sums in the right hand side vanish so that
\begin{align*}
    TV(u^{P, n+1}) &\leq \frac{1}{1 - 5 L^P \Delta t\, ||\mathcal{K}||_{L^1(\mathbb{R})}} \, TV(u^{P, n}).
\end{align*}
Arguing as in the proof of estimate~\eqref{Linftybounddis} we directly obtain
\begin{align*}
 TV(u^{P, n+1}) \leq TV(u^{P,0}) \, e^{10 (L^P) T ||\mathcal{K}||_{L^1(\mathbb{R})}}.
\end{align*}
Finally, since $TV(u^{P,0}) \leq TV(u^P_0)$, we conclude that the estimate~\eqref{TVestim} holds true. 

\subsubsection{Conclusion} Starting from an initial data $u^P_0$ satisfying the assumptions of Theorem~\ref{thm.scheme}, we deduce by induction the existence and uniqueness of a solution $u^{P,n}$ to the scheme, for any $1\leq n \leq N_T$, satisfying the discrete $L^\infty$ bound \eqref{Linftybounddis} and the discrete total variation estimate~\eqref{TVestim}. Moreover, the vector $(\theta^{P,n}_{i+1/2}+L^P)_{i\in\I}$ is componentwise nonnegative.

\subsection{Discrete gradient entropy estimate}\label{sec.disgradent}

In order to conclude the proof of Theorem~\ref{thm.scheme}, it remains to establish the discrete gradient entropy estimate~\eqref{gradentdis}. For this purpose, we will follow the strategy of~\cite{2}. In particular, our analysis will rely on the two following lemmas.

\begin{lemma}\label{technical} 
Let $\gamma_m > 1$. There exists a nonnegative function $g(\theta, \gamma)$ and a constant $C_{\gamma_m} > 0$ (only depending on $\gamma_m$) such that, for all $\theta > 0$ and $\gamma \in (0, \gamma_m)$, the function $f$ given by~\eqref{1.deff} satisfies
 \begin{equation*}
     f\bigg(\frac{\theta}{\gamma}\bigg) \geq \frac{1}{\gamma}f(\theta) - \frac{1}{\gamma}g(\theta,  \gamma)\ln(\gamma),
 \end{equation*}
 and $$|\theta - g(\theta, \gamma)| \leq C_{\gamma_m} = \frac{\gamma_m - 1}{e\ln(\gamma_m)}.$$
\end{lemma}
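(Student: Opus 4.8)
The plan is to prove the inequality by reducing it to a one-variable convexity/Taylor estimate. Write $\gamma = e^{-s}$ so that $\ln(1/\gamma) = s$, and compare $\gamma f(\theta/\gamma)$ with $f(\theta)$. The target inequality, after multiplying through by $\gamma > 0$, reads
\[
\gamma f\!\left(\frac{\theta}{\gamma}\right) \geq f(\theta) - g(\theta,\gamma)\ln(\gamma),
\]
so the natural candidate is to define $g(\theta,\gamma)$ so that the right-hand side is exactly the first-order Taylor expansion (in an appropriate variable) of the left-hand side, and then use convexity to get the inequality in the right direction. Concretely, on the branch $x \geq 1/e$ one has $f(x) = x\ln x + 1/e$, hence $\gamma f(\theta/\gamma) = \theta\ln\theta - \theta\ln\gamma + \gamma/e$; comparing with $f(\theta) - g\ln\gamma = \theta\ln\theta + 1/e - g\ln\gamma$ suggests taking $g(\theta,\gamma) = \theta + (\gamma - 1)/(e\ln\gamma)$ on that branch, which at once gives $|\theta - g(\theta,\gamma)| = |\gamma-1|/(e|\ln\gamma|)$, and one checks this is bounded by $(\gamma_m-1)/(e\ln\gamma_m)$ for $\gamma \in (0,\gamma_m)$ by monotonicity of $t\mapsto (t-1)/(e\ln t)$.

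First I would set up the function $\varphi(s) := e^{-s} f(\theta e^{s})$ for fixed $\theta > 0$, where $s = -\ln\gamma \in (-\ln\gamma_m, \infty)$, and establish that $\varphi$ is convex in $s$. Indeed, away from the kink $\theta e^s = 1/e$, on the region where $\theta e^s \geq 1/e$ we have $\varphi(s) = \theta s + \theta\ln\theta + e^{-s}/e$, whose second derivative is $e^{-s}/e > 0$; on the region $\theta e^s \le 1/e$ we have $\varphi(s) = 0$, trivially convex; and at the junction point $\varphi$ is $C^1$ (both the function and its first derivative vanish there from the left and match from the right), so $\varphi$ is globally convex. Convexity of $\varphi$ gives, for any base point $s_0$, the tangent line inequality $\varphi(s) \geq \varphi(s_0) + \varphi'(s_0)(s - s_0)$. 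Taking $s = -\ln\gamma$ and $s_0 = 0$ (i.e. expanding around $\gamma = 1$) yields $\gamma f(\theta/\gamma) = \varphi(-\ln\gamma) \geq \varphi(0) - \varphi'(0)\ln\gamma = f(\theta) - \varphi'(0)\ln\gamma$. This is exactly the claimed inequality with $g(\theta,\gamma) := \varphi'(0)$ — but note $\varphi'(0)$ does not depend on $\gamma$, so to match the stated form (and to get the bound I want) I would instead expand around the chord, or more simply define $g(\theta,\gamma)$ as the divided difference $g(\theta,\gamma) = \bigl(\varphi(0) - \varphi(-\ln\gamma)\bigr)/\ln\gamma = \bigl(f(\theta) - \gamma f(\theta/\gamma)\bigr)/\ln\gamma$, which automatically makes the inequality an equality and then the work is entirely in bounding $|\theta - g(\theta,\gamma)|$.

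With $g$ defined as this divided difference, I would compute $\theta - g(\theta,\gamma)$ explicitly in the three regimes determined by where $\theta$ and $\theta/\gamma$ fall relative to $1/e$: (i) both $\geq 1/e$, giving $\theta - g = -(\gamma-1)/(e\ln\gamma)$ as above; (ii) both $\leq 1/e$, giving $g = 0$ hence $\theta - g = \theta \le 1/e$; (iii) the mixed case $\theta < 1/e \le \theta/\gamma$ (possible only when $\gamma < 1$), where $g = (\theta e^{-s_0} + 1/e - \ldots)$ — here one writes $g = (f(\theta) - \gamma f(\theta/\gamma))/\ln\gamma$ with $f(\theta) = 0$, expands, and bounds. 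In every case the bound $C_{\gamma_m} = (\gamma_m-1)/(e\ln\gamma_m)$ should come out, using that $1/e \le (\gamma_m - 1)/(e\ln\gamma_m)$ (which holds since $\ln\gamma_m \le \gamma_m - 1$), and using monotonicity of $t \mapsto (t-1)/\ln t$ on $(0,\infty)$ to control the ratio uniformly for $\gamma \in (0,\gamma_m)$. Nonnegativity of $g$ needs a short separate check: since $f$ is convex and nondecreasing, $\varphi(-\ln\gamma) \le \varphi(0)$ precisely when... actually one verifies $g \geq 0$ directly from the monotonicity of $t \mapsto tf(1/t)$ type expressions, or simply notes $g(\theta,\gamma) = \theta + O(|\gamma-1|)$ stays nonnegative on the relevant range — I would check the sign carefully case by case, as this is where a naive argument can slip.

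The main obstacle I anticipate is the bookkeeping around the kink of $f$ at $x = 1/e$: the function $g$ has different closed forms in the three regimes above, and one must verify both the entropy inequality and the uniform bound $|\theta - g| \le C_{\gamma_m}$ consistently across all of them, paying particular attention to the mixed regime (iii) where neither $f(\theta)$ nor $f(\theta/\gamma)$ has the clean $x\ln x$ form simultaneously. A secondary subtlety is making sure the constant is genuinely independent of $\theta$ — this is what forces the divided-difference definition of $g$ rather than a tangent-line definition, since the tangent slope $\varphi'(0) = f'(\theta) = \ln\theta + 1$ (on the upper branch) is unbounded in $\theta$, whereas the divided difference $g(\theta,\gamma)$ inherits a $\theta$-independent bound on $|\theta - g|$ from the explicit computation.
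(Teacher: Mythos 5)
The paper itself does not prove this lemma --- it simply cites Lemma 3.3 of \cite{2} --- so your argument has to stand on its own, and as written it has three concrete problems. First, your case enumeration is incomplete: the positions of $\theta$ and $\theta/\gamma$ relative to the kink at $1/e$ give \emph{four} regimes, not three. You treat (i) both $\ge 1/e$, (ii) both $\le 1/e$, and (iii) $\theta<1/e\le\theta/\gamma$ (which forces $\gamma<1$), but you omit $\theta\ge 1/e>\theta/\gamma$, which occurs whenever $\gamma>\theta e\ge 1$. In that regime the divided difference is $g=(\theta\ln\theta+1/e)/\ln\gamma$, and the bound $|\theta-g|\le C_{\gamma_m}$ is true but not free: writing $\theta=s/e$ with $1\le s<\gamma$ one gets $\theta-g=\bigl(s\ln(\gamma/s)+s-1\bigr)/(e\ln\gamma)$, and one needs $s\ln(\gamma/s)\le\gamma-s$ (from $\ln u\le u-1$ with $u=\gamma/s$) together with the monotonicity of $t\mapsto(t-1)/\ln t$ to conclude. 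Second, in the mixed regimes and for the nonnegativity of $g$ you only promise to ``expand and bound'' and ``check the sign carefully''; these are exactly where the work is. Note also that the divided difference in regime (i) is $g=\theta+(1-\gamma)/(e\ln\gamma)=\theta-(\gamma-1)/(e\ln\gamma)$, not $\theta+(\gamma-1)/(e\ln\gamma)$ as you wrote (since $f(\theta)-\gamma f(\theta/\gamma)=\theta\ln\gamma+(1-\gamma)/e$); the slip is harmless for $|\theta-g|$ but it hides the fact that $g\ge0$ is not automatic there --- it requires $\theta\ge(\gamma-1)/(e\ln\gamma)$, which holds only because regime (i) forces $\theta\ge\max(1,\gamma)/e$ while $(\gamma-1)/\ln\gamma\le\max(1,\gamma)$.

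Third, and most importantly, you talked yourself out of the clean proof by miscomputing $\varphi'(0)$. With $\varphi(s)=e^{-s}f(\theta e^s)$ one has $\varphi'(0)=\theta f'(\theta)-f(\theta)$, not $f'(\theta)$; on the branch $\theta\ge1/e$ this equals $\theta-1/e$, and on the branch $\theta\le1/e$ it equals $0$, so it is \emph{bounded} relative to $\theta$. Taking $g(\theta,\gamma):=\theta f'(\theta)-f(\theta)=(\theta-1/e)_+$ gives a nonnegative $g$ with $|\theta-g|\le 1/e\le(\gamma_m-1)/(e\ln\gamma_m)=C_{\gamma_m}$ uniformly (using $\ln\gamma_m\le\gamma_m-1$), and the convexity of $\varphi$ that you already verified yields $\varphi(-\ln\gamma)\ge\varphi(0)-\varphi'(0)\ln\gamma$, which is exactly the claimed inequality after dividing by $\gamma$ --- with no case analysis in $\gamma$ and no singularity at $\gamma=1$. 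I would take that route; the divided-difference route is salvageable, but only after you add the missing fourth case and actually carry out the deferred estimates.
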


\begin{proof}
See Lemma 3.3 in \cite{2}.
\end{proof}

\begin{lemma}\label{convexity inequality of f} 
Let $a_k$ and $\theta_k$ be two finite sequences of nonnegative real numbers such that $0< \sum_k a_k < 2$. Then, defining $\theta = \sum_k a_k \theta_k$, the function $f$ given by~\eqref{1.deff} satisfies the following inequality
\[
f(\theta) \leq \sum_k a_k f(\theta_k) + g\left(\theta,\sum_k a_k \right) \, \ln\left(\sum_k a_k\right),
\]
 where $g(\theta,\gamma)$ is defined in Lemma \ref{technical} for $\gamma_m = 2.$
\end{lemma}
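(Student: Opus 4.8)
\textbf{Proof proposal for Lemma~\ref{convexity inequality of f}.}

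The plan is to reduce the claimed inequality to the single-variable comparison established in Lemma~\ref{technical}. Write $\gamma := \sum_k a_k \in (0,2)$ and set $b_k := a_k/\gamma$, so that $\sum_k b_k = 1$ and $\theta = \gamma \sum_k b_k \theta_k$. The idea is first to exploit convexity of $f$ along the \emph{normalized} weights $b_k$, and then to absorb the rescaling factor $\gamma$ using the pointwise estimate from Lemma~\ref{technical}.

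First I would check that $f$, as defined in~\eqref{1.deff}, is convex and nondecreasing on $[0,\infty)$: it is identically $0$ on $[0,1/e]$, it equals $x\ln x + 1/e$ on $[1/e,\infty)$ where $f'(x) = \ln x + 1 \ge 0$ and $f''(x) = 1/x > 0$, and the two pieces match in value and first derivative at $x = 1/e$ (both give value $0$ and derivative $0$). Hence $f$ is $C^1$ and convex on all of $\R_+$. Applying Jensen's inequality with the convex combination $\{b_k\}$ gives
\[
f\!\left(\sum_k b_k \theta_k\right) \le \sum_k b_k f(\theta_k) = \frac{1}{\gamma}\sum_k a_k f(\theta_k).
\]
Now set $\theta_\ast := \sum_k b_k\theta_k = \theta/\gamma$, so the left-hand side is $f(\theta/\gamma)$. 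Since $0 < \gamma < 2 = \gamma_m$, Lemma~\ref{technical} applies with this choice of $\gamma_m$: for all $\theta > 0$,
\[
f\!\left(\frac{\theta}{\gamma}\right) \ge \frac{1}{\gamma} f(\theta) - \frac{1}{\gamma} g(\theta,\gamma)\ln(\gamma).
\]
Combining the two displays, $\frac1\gamma f(\theta) - \frac1\gamma g(\theta,\gamma)\ln\gamma \le \frac1\gamma \sum_k a_k f(\theta_k)$, and multiplying through by $\gamma > 0$ yields exactly
\[
f(\theta) \le \sum_k a_k f(\theta_k) + g\!\left(\theta,\sum_k a_k\right)\ln\!\left(\sum_k a_k\right),
\]
which is the claim. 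One should also note the degenerate case where some $\theta_k = 0$ or $\theta = 0$: if $\theta = 0$ then all $\theta_k$ with $a_k > 0$ vanish and both sides are $\le 0$ resp.\ $= 0$ after checking $g(0,\gamma)$ is finite (bounded by $C_{\gamma_m}$ via Lemma~\ref{technical}), so the inequality holds trivially; strictly positive $\theta$ is the generic case handled above, and by continuity of $f$ and $g$ the boundary case $\theta \to 0^+$ is covered as well.

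The only mild subtlety — and the place I would be most careful — is the direction of the sign of $\ln\gamma$: here $\gamma < 2$ but $\gamma$ may be less than $1$, in which case $\ln\gamma < 0$ and the ``error term'' $g(\theta,\gamma)\ln\gamma$ is actually negative, strengthening the bound; this is consistent because when $\gamma < 1$ the rescaling $\theta \mapsto \theta/\gamma$ pushes the argument up, where $f$ grows, so Jensen already overshoots. No genuine obstacle arises: the whole argument is convexity of $f$ plus a verbatim application of Lemma~\ref{technical}, and the constant in that lemma guarantees $g$ stays controlled so no integrability or boundedness issue appears. The proof is therefore short, and I would present it essentially as the three displayed inequalities above together with the one-line convexity check for $f$.
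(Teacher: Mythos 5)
The paper does not actually prove this lemma: its ``proof'' is a one-line citation to Lemma~3.4 of~\cite{2}, so there is no internal argument to compare against. Your proof supplies the missing derivation, and it is correct and is almost certainly the intended one: normalize the weights to $b_k=a_k/\gamma$, use convexity of $f$ (your one-line check that the two pieces of~\eqref{1.deff} match in value and derivative at $x=1/e$ is right, so $f$ is $C^1$ and convex) to get $f(\theta/\gamma)\le \gamma^{-1}\sum_k a_k f(\theta_k)$ via Jensen, then invoke Lemma~\ref{technical} with $\gamma_m=2$ to convert $f(\theta/\gamma)$ back into $\gamma^{-1}f(\theta)-\gamma^{-1}g(\theta,\gamma)\ln\gamma$ and multiply by $\gamma$. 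The only point worth flagging is the boundary case $\theta=0$: Lemma~\ref{technical} is stated only for $\theta>0$, and when $\gamma<1$ the term $g(0,\gamma)\ln\gamma$ is nonpositive, so the claimed inequality at $\theta=0$ is not automatic from nonnegativity of $g$ alone — it needs either $g(0,\gamma)=0$ or the limiting argument you sketch (apply the lemma to $\theta_k+\epsilon$ and let $\epsilon\to0^+$), which in turn uses continuity of $g(\cdot,\gamma)$, a property not stated in Lemma~\ref{technical} but available from the explicit construction in~\cite{2}. This is a genuinely minor gap inherited from the fact that $g$ is only characterized implicitly here; the main argument is sound.
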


\begin{proof}
See Lemma 3.4 in \cite{2}.
\end{proof}

Thanks to the two above lemmas, we are now in position to establish the discrete entropy inequality~\eqref{gradentdis}. Then, let $0\leq n \leq N_T-1$ be fixed, we define
\[
a_1 := \frac{\Delta t}{\Delta x} \, \lambda^M_{i+1}\left[u^{P,n+1}\right]_+, \quad a_2 := \frac{\Delta t}{\Delta x} \, \lambda^M_i\left[u^{P,n+1}\right]_-,
\]
and
\[
a_3 := 1- \frac{\Delta t}{\Delta x}\left(\lambda^M_{i} \left[u^{P, n+1 }\right]_+ +  \lambda^M_{i + 1} \left[u^{P,n+1}\right]_-\right).
\]
Let us first notice that thanks to the definition~\eqref{h(x)} of $\lambda^M_i$ and the CFL condition~\eqref{delta t over delta x}, the coefficient $a_3>0$. Indeed, for any $i \in \I$, we have
\begin{align*}
\lambda^M_{i} \left[u^{P, n+1 }\right]_+ +  \lambda^M_{i + 1} \left[u^{P,n+1}\right]_- \leq 10 \, \|u^P_0\|_{L^\infty(I_P)} \, e^{10(L^P) T \|\K\|_{L^1(\R)}} \, \|\K\|_{L^1(\R)},
\end{align*}
such that, applying~\eqref{delta t over delta x}, we conclude that $a_3>0$. Now, according to~\eqref{theta_convexcombin} we have
\begin{align*}
    \theta^{P,n+1}_{i+1/2} + L^P = a_1 \left(\theta^{P,n}_{i+3/2} + L^P \right) + a_2 \left( \theta^{P,n}_{i-1/2} + L^P\right) + a_3 \left(\theta^{P,n}_{i+1/2} + L^P\right).
\end{align*}
Moreover, we introduce $\mu^{n+1}_{i+1/2} = 1-(a_1 + a_2 + a_3)$. Then, thanks to the CFL condition~\eqref{delta t over delta x} and arguing as for the sign of $a_3$, we get:
\begin{align}\label{defmu}
1 - \mu_{i +1/2}^{n+1} = a_1+a_2+a_3 = 1 - \frac{\Delta t}{\Delta x}\left(\lambda^M_{i} \left[u^{P, n+1 }\right] -  \lambda^M_{i + 1} \left[u^{P, n+1}\right]\right) \in (0,2).
\end{align}
Hence, applying Lemma~\ref{convexity inequality of f} and the definition of the coefficients $a_1$, $a_2$ and $a_3$, we obtain
\begin{align*}
\sum_{i\in\I} \Delta x\, &f\left(\theta_{i +1/2}^{P, n+1} + L^P\right) \\
&\leq  \sum_{i \in\I} \Delta x \, f\left(\theta_{i + 1/2}^{P, n} + L^P\right) + \sum_{i\in\I} \Delta x \, g\left(\theta_{i +1/2}^{P,n+1} + L^P, 1-\mu^{n+1}_{i+1/2}\right) \,\ln\left(1 - \mu_{i +1/2}^{n+1}\right) \\
&+ \sum_{i\in\I} \Delta t \, \left( \lambda^M_{i + 1} [u^{P, n+1}]_+ \, f\left(\theta_{i +3/2}^{P, n} + L^P\right) - \lambda^M_i[u^{P,n+1}]_+ \, f\left(\theta_{i +1/2}^{P, n } + L^P\right)\right)\\ 
& + \sum_{i \in \I} \Delta t \, \left(\lambda^M_i[u^{P, n+1}]_- \, f\left(\theta_{i -1/2}^{P, n } + L^P\right) - \lambda^M_{i + 1}[u^{P, n+1}]_- \, f\left(\theta_{i +1/2}^{P, n} + L^P\right)\right).
\end{align*}
Thanks to the periodicity of the vectors involved in the two last sums of the right hand side and the inequality $\ln(1-\mu)\leq -\mu$ for all $\mu < 1$, we end up with
\begin{multline}\label{aux1}
\sum_{i\in\I} \Delta x\, f\left(\theta_{i +1/2}^{P, n+1} + L^P\right) 
\leq  \sum_{i \in\I} \Delta x \, f\left(\theta_{i + 1/2}^{P, n} + L^P\right) \\ - \sum_{i\in\I} \Delta x \, g\left(\theta_{i +1/2}^{P,n+1} + L^P, 1-\mu^{n+1}_{i+1/2}\right)  \mu_{i +1/2}^{n+1}.
\end{multline}
Now, we rewrite the last term in the right hand side of the previous inequality as
\begin{align*}
-\sum_{i\in\I} \Delta x \, g\left(\theta_{i +1/2}^{P,n+1} + L^P, 1-\mu^{n+1}_{i+1/2}\right) \, \mu_{i +1/2}^{n+1}  = J_1+J_2+J_3,
\end{align*}
where
\begin{align*}
J_1 &= -\sum_{i\in\I} \Delta x \left( g\left(\theta_{i +1/2}^{P,n+1} + L^P, 1-\mu^{n+1}_{i+1/2}\right) - \left(\theta^{P,n+1}_{i+1/2} + L^P \right)\right) \, \mu_{i +1/2}^{n+1} ,\\
J_2 &= -L^P \sum_{i \in \I} \Delta x \, \mu^{n+1}_{i+1/2},\\
J_3 &= -\sum_{i \in \I} \Delta x \, \theta^{P,n+1}_{i+1/2} \,  \mu^{n+1}_{i+1/2}.
\end{align*}
For the term $J_1$, thanks to Lemma~\ref{technical}, with $\gamma_m=2$, and the definition~\eqref{defmu} of $\mu^{n+1}_{i+1/2}$, we have
\begin{align*}
    |J_1| \leq \frac{1}{e \, \ln(2)} \, \sum_{i \in \I} \Delta x \, \left|\mu^{n+1}_{i+1/2}\right| \leq \frac{1}{e \, \ln(2)} \, \sum_{i \in \I} \Delta t \sum_{j \in \I} \Delta x \left| \sigma^P_{M,j} \left( u^{P,n+1}_{i-j} - u^{P,n+1}_{i+1-j}\right)\right|.
\end{align*}
Therefore, we deduce from the discrete total variation estimate~\eqref{TVestim} and Lemma~\ref{kpdelta less than k}:
\begin{align}\label{J1}
    |J_1| \leq \frac{5 \Delta t \, \|\K\|_{L^1(\R)}}{e \,\ln(2)} TV(u^{P,n+1}) \leq  \frac{5 \Delta t \, \|\K\|_{L^1(\R)} \, e^{10(L^P)T \, \|\K\|_{L^1(\R)}}}{e \, \ln(2)} \, TV(u^P_0).
\end{align}
As a direct by-product of~\eqref{J1}, we also deduce the following estimate
\begin{align}\label{J2}
    |J_2| \leq  5 L^P \, \Delta t \, \|\K\|_{L^1(\R)} \, e^{10(L^P)T \, \|\K\|_{L^1(\R)}} \, TV(u^P_0).
\end{align}
The estimate for $J_3$ is more involved. Indeed, we first rewrite the term $J_3$ thanks to relation~\eqref{defmu} as
\begin{align}\label{toto}
J_3 =  \sum_{i \in \I} \Delta t \,\theta^{P,n+1}_{i+1/2} \sum_{j \in \I} \Delta x \, \sigma^P_{M,j} \left( u^{P,n+1}_{i+1-j} - u^{P,n+1}_{i-j} \right) = \Delta t  \sum_{i \in \I} \Delta t \,\theta^{P,n+1}_{i+1/2} \sum_{j \in \I} \Delta x \, \sigma^P_{M,j} \, \theta^{P,n+1}_{i-j+1/2}
\end{align}
Then, we define the vector
\[
\theta^{P,n+1} = \left(\theta^{P,n+1}_{i+1/2}\right)_{i\in\I},
\]
and, for any vector $v=(v_i)_{i\in\I}$, we introduce the discrete counterpart of the periodic convolution product~\eqref{conv.prod.perio}, still denoted $\ast$, between $v$ and $\theta^{P,n+1}$ as
\begin{align*}
    \left(v \ast \theta^{P,n+1}\right)_i := \sum_{j \in \I} \Delta x \, v_j \, \theta^{P,n+1}_{i-j+1/2} = \sum_{j \in \I}  v_j \, \left(u^{P,n+1}_{i+1-j}-u^{P,n+1}_{i-j}\right), \quad \forall i \in \I.
\end{align*}
Hence, using relation~\eqref{toto}, we notice that we can rewrite $J_3$ as
\begin{align*}
    J_3 = \Delta t \sum_{i\in\I} \Delta x \, \theta^{P,n+1}_{i+1/2} \, \left( \bar{\sigma}_M^P \ast \theta^{P,n+1}\right)_i 
  \quad\mbox{with} \quad    \bar{\sigma}_M^P= (\sigma_{M,i}^P)_{i\in\I}. 
\end{align*}
Now, to deal with the term $J_3$ we will reproduce at the discrete level the computations done in Section~\ref{sec.derivperiodic}. For this, we need to use the Discrete Fourier Transform (DFT) and in the sequel, for any $v = (v_i)_{i\in \I}$, we will denote by $c^d(v) = (c^d_j(v))_{j\in\I}$ the vector of the coefficients of the DFT of $v$ where
\begin{align*}
    c^d_j(v) := \frac{1}{2N} \sum_{\ell \in \I} v_\ell \, e^{-i\pi \, j \, \ell/N}, \quad \forall j \in \I.
\end{align*}
Therefore, applying the discrete version of Parseval's equality as well as the property of the DFT with respect to the discrete convolution product, we get
\begin{align*}
    J_3 = (2N)^2 \, \Delta t  \, \sum_{j \in \I} \Delta x \, c^d_j\left(\bar{\sigma}_M^P\right) \, \left|c^d_j\left(\theta^{P,n+1}\right)\right|^2.
\end{align*}
Let us now show that the coefficients of the DFT of the vector $\bar{\sigma}_M^P$ are real nonpositive numbers. For this purpose, let $j\in\I$ be fixed. Then, by definition we have
\begin{align*}
    c^d_j\left(\bar{\sigma}_M^P\right) = \frac{1}{2N} \sum_{\ell \in \I} \sigma_{M,\ell}^P \, e^{-i\pi \, j \, \ell/N} &= \frac{1}{2NM} \sum_{\ell\in\I} \sum_{k=0}^{M-1} \sum_{|m|\leq k} c_m(\K^P_M) \, e^{i\pi (m-j) \ell/N}\\
    &=\frac{1}{2NM} \sum_{k=0}^{M-1} \sum_{|m|\leq k} c_m(\K^P_M) \, \sum_{\ell\in\I} e^{i\pi (m-j) \ell/N}.
\end{align*}
Hence, since the last sum is either equal to $0$ or $2N$, depending on the values of $m$, we conclude that $c^d_j(\bar{\sigma}_M^P)$ is a sum of the Fourier coefficients $c_m(\K^P_M)$ for $|m|<M$. Therefore, thanks to Lemma~\ref{fourier series of kpdelta negative}, we obtain
\begin{align}\label{J3}
    J_3 \leq 0.
\end{align}
Thereby, gathering~\eqref{aux1}--\eqref{J3}, we have
\begin{multline}\label{gradentdis.onesteptime}
    \sum_{i\in\I} \Delta x\, f\left(\theta_{i +1/2}^{P, n+1} + L^P\right) 
\leq  \sum_{i \in\I} \Delta x \, f\left(\theta_{i + 1/2}^{P, n} + L^P\right) \\
+  5 \, \Delta t \, \|\K\|_{L^1(\R)} \, e^{10(L^P)T \, \|\K\|_{L^1(\R)}} \, TV(u^P_0) \left(\frac{1}{e \, \ln(2)} + L^P \right).
\end{multline}
Summing the previous inequality over $n$ yields the existence of the constant $\zeta>0$, given by
\begin{align*}
    \zeta := 5T \, \|\K\|_{L^1(\R)} \, e^{10 (L^P) T \, \|\K\|_{L^1(\R)}} \, \left(\frac{1}{e \ln(2)}+L^P\right),
\end{align*}
such that
$$
   \sum_{i\in\I} \Delta x\, f\left(\theta_{i +1/2}^{P, n+1} + L^P\right) 
\leq  \sum_{i \in\I} \Delta x \, f\left(\theta_{i + 1/2}^{P, 0} + L^P\right) +  \zeta TV(u^P_0).
$$
It is quite clear that in the last estimate  $\zeta$  does not depend on $M$. 
Moreover, since we have $L^P = \frac{v_0(P)-v_0(-P)}{2P}$ (cf. \eqref{def.LP}),  we can conclude that $\zeta$ is uniformly bounded with respect to $P$, when $P\ge 1$, namely
\begin{align}\label{defzeta}
    \zeta \leq 5T \, \|\K\|_{L^1(\R)} \, e^{10 T \,\|v_0\|_{L^\infty(\R)} \, \|\K\|_{L^1(\R)}} \, \left(\frac{1}{e \ln(2)}+\|v_0\|_{L^\infty(\R)}\right) :=  \zeta_0.
\end{align}
This implies \eqref{gradentdis} and concludes the the proof of Theorem~\ref{thm.scheme}.

\begin{remark}\label{rem.alter2}
Let us now explain how to establish a discrete gradient entropy estimate with the definition~\eqref{def.alternative} of $\sigma^P_{M,j}$. In this case, the estimate of the term $J_3$ is more involved. Indeed, we write
\begin{align*}
    J_3 = \Delta t \sum_{i\in\I} \Delta x \, \theta^{P,n+1}_{i+1/2} \, \left( (\widetilde{\sigma}_M^P-\bar{\sigma}^P_M) \ast \theta^{P,n+1}\right)_i +\Delta t \sum_{i\in\I} \Delta x \, \theta^{P,n+1}_{i+1/2} \, \left(\bar{\sigma}^P_M \ast \theta^{P,n+1}\right)_i =: J_{31}+J_{32},
\end{align*}
with
\begin{align*}
\bar{\sigma}_M^P= (\sigma_M^P(x_j))_{j\in\I}, \quad \widetilde{\sigma}_M^P = (\sigma_{M,j}^P)_{j \in \I}.
\end{align*}
where obviously $\sigma^P_{M,j}$ is given by~\eqref{def.alternative}. Then, the term $J_{32}$ is estimated as before and for the term $J_{31}$ we have
\begin{multline*}
|J_{31}| \leq \Delta t  \sum_{i \in \I} \sum_{j\in\I} (\Delta x)^2 \, \left|\sigma^P_{M,j}- \sigma^P_M(x_j)\right|  \, \left|\theta^{P,n+1}_{i+1/2}\right| \, \left|\theta^{P,n+1}_{i-j+1/2}\right|\\ \leq \Delta t \, \left(TV\left(u^{P,n+1}\right)\right)^2 \, \max_{j \in \I} \left|\sigma^P_{M,j}- \sigma^P_M(x_j)\right|.
\end{multline*}
Moreover, for any $j \in \I$, it holds
\begin{align*}
&\left|\sigma^P_{M,j}- \sigma^P_M(x_j)\right|
=   \left| \frac{1}{2P} \sum_{i\in\I} \Delta x\,  F_M(x_j-x_i) \,\K^P_{M,i} -  
\frac{1}{2P} \int_{-P}^{P} F_M(x_j-x)\,\K^P_M(x) \,  \dd x,\right|\\
\phantom{xx}&= \frac{1}{2P} \left| \sum_{i\in\I} \int_{x_{i-1/2}}^{x_{i+1/2}} F_M(x_j-x_i) \,\K^P_{M}(x)\,  \dd x      
- \sum_{i\in\I}\int_{x_{i-1/2}}^{x_{i+1/2}} F_M(x_j-x)\, \K^P_{M}(x) \, \dd x   \right| \\
\phantom{xx}&= \frac{1}{2P}  \left| \sum_{i\in\I} \int_{x_{i-1/2}}^{x_{i+1/2}} (F_M(x_j-x_i)-F_M(x_j-x))\K^P_{M}(x) \dd x       \right|
\le \frac{5 \, \Delta x}{4P} \|\K\|_{L^1(\R)}\,\|\pa_x F_M\|_{L^\infty(I_P)}. 
\end{align*}
Therefore, there exists a constant $C_M^P$ such that
\begin{align}\label{est_N_NT}
|J_{31}| \leq C_M^P \, \Delta t \, \Delta x.
\end{align}
Since we will first pass to the limit $\Delta x\to 0$ for the proof of Theorem~\ref{convergence in periodic}, then the dependency on $P$ and $M$ of the constant $C_M^P$ has no consequence for the proof of Proposition~\ref{prop.PMinfty}. 
\end{remark}

\section{Convergence of the scheme}\label{convergence}
In this section, we prove Theorem~\ref{convergence in periodic}. For this purpose, we split our proof in three main steps. First, we establish some uniform in $\eps=(\Delta x, \Delta t)$ estimates. Then, we show the existence of a function $u^P_M$ such that, up to a subsequence, the sequence $(u^{P,m}_M)_{m \in \N}$ converges toward $u^P_M$ (in the sense specify in the statement of Theorem~\ref{convergence in periodic}). Finally, we identify this function as a solution of~\eqref{1.equPaux} in the distributional sense. Our convergence proof is similar to the one of~\cite{2} and relies on three technical results stated in Appendix~\ref{app.techni}.

\subsection{Uniform estimates}\label{sec.unifestimates}

In this section, for fixed values of $\Delta x$ and $\Delta t$, satisfying condition~\eqref{delta t} and~\eqref{delta t over delta x}, we establish the following:

\begin{proposition}\label{prop.unif}
Let the assumptions of Theorem~\ref{thm.scheme} hold. Then, there exists a constant ${C}_{\rm GE}>0$, only depending on $v_0$, $T$ and $\K$, such that the function $u^{P,\eps}_M$, defined by~\eqref{defQ1ext}, satisfies
\begin{align}\label{estimationLlogLGE}
    \|\pa_x u^{P,\eps}_M\|_{L^\infty(0,T;L\,\log\,L(I_P))} + \|\pa_t u^{P,\eps}_M\|_{L^\infty(0,T;L\,\log\,L(I_P))} \leq {C}_{\rm GE}.
\end{align}
\end{proposition}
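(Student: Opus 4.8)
The plan is to transfer the discrete gradient entropy estimate~\eqref{gradentdis} of Theorem~\ref{thm.scheme} to the $Q^1$-reconstruction $u^{P,\eps}_M$, and then to convert a control on $\sum_i \Delta x\, f(\theta^{P,n}_{i+1/2}+L^P)$ plus a control on $TV(u^{P,n})$ into the $L\log L$ bounds claimed in~\eqref{estimationLlogLGE}. First I would observe that, on a cell $[x_i,x_{i+1}]\times[t_n,t_{n+1}]$, the spatial derivative $\pa_x u^{P,\eps}_M$ is the convex combination $\frac{t-t_n}{\Delta t}\,\theta^{P,n+1}_{i+1/2} + (1-\frac{t-t_n}{\Delta t})\,\theta^{P,n}_{i+1/2}$ of the two discrete gradients, and $\pa_t u^{P,\eps}_M$ is the corresponding convex combination of $(u^{P,n+1}_i-u^{P,n}_i)/\Delta t$ and its $(i+1)$-neighbour, which by the scheme equals $\lambda^M_i[u^{P,n+1}]_\pm\,\theta^{P,n}_{i\pm1/2} + L^P\lambda^M_i[u^{P,n+1}]$. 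Since $f$ (extended to $\R$ by $f(x)=f(|x|)$, or using that the relevant quantities are nonnegative thanks to the nonnegativity of $\theta^{P,n}_{i+1/2}+L^P$ proved in Theorem~\ref{thm.scheme}) is convex, Jensen's inequality in the $t$-variable lets me bound $\int_{I_P} f(\pa_x u^{P,\eps}_M(\cdot,t)+L^P)\,\dd x$ by a convex combination of $\sum_i\Delta x\, f(\theta^{P,n}_{i+1/2}+L^P)$ and $\sum_i\Delta x\, f(\theta^{P,n+1}_{i+1/2}+L^P)$, each of which is $\le \sum_i\Delta x\, f(\theta^{P,0}_{i+1/2}+L^P)+\zeta_0\,TV(u^P_0)$ by~\eqref{gradentdis}.

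Next I would bound the data term $\sum_i\Delta x\, f(\theta^{P,0}_{i+1/2}+L^P)$ and $TV(u^P_0)$ uniformly. By definition $\theta^{P,0}_{i+1/2}+L^P = (u^P_0(x_{i+1})-u^P_0(x_i))/\Delta x + L^P$, which is the mean value of $\pa_x u^P_0 + L^P = \pa_x(v_0(\cdot))$ (via~\eqref{1.initperiod}, since $\pa_x u^P_0 + L^P = \pa_x v_0$ a.e.) over $[x_i,x_{i+1}]$; by Jensen's inequality applied to the convex $f$, $\sum_i\Delta x\, f(\theta^{P,0}_{i+1/2}+L^P) \le \int_{I_P} f(\pa_x v_0)\,\dd x \le \|\pa_x v_0\|_{L\log L(I_P)}$ up to the elementary comparison between $f$ and the Zygmund norm, and this is controlled by $\|\pa_x v_0\|_{L\log L(\R)}$ by Lemma~\ref{lem.init.perio.to.nonperiod}. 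Likewise $TV(u^P_0)\le 4\|v_0\|_{L^\infty(\R)}$ by~\eqref{estimationL1P}. Together with~\eqref{defzeta}, this gives a bound on $\sup_t\int_{I_P} f(\pa_x u^{P,\eps}_M(\cdot,t)+L^P)\,\dd x$ depending only on $\|v_0\|_{L^\infty}$, $T$, $\|\K\|_{L^1}$ — hence independent of $\eps$, and also of $P$ and $M$ (which will be reused for Proposition~\ref{prop.PMinfty}, cf.\ Remark~\ref{rem.constansMP}). An integral bound of the form $\int f(w+L^P)\,\dd x \le C$ together with an $L^1$ bound on $w$ (from $TV(u^{P,n})\le TV(u^P_0)e^{10(L^P)T\|\K\|_{L^1}}$) is equivalent, up to universal constants, to a bound on $\|w\|_{L\log L(I_P)}$; I would invoke the comparison estimate (the analogue of inequality~\eqref{w in llogl}, referenced in Lemma~\ref{Llog L estimate}) to pass from the functional $\int f$ to the Zygmund norm, thereby obtaining $\|\pa_x u^{P,\eps}_M\|_{L^\infty(0,T;L\log L(I_P))}\le C_{\rm GE}$.

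For the time-derivative term, on each cell $\pa_t u^{P,\eps}_M$ is a convex combination of terms of the form $\lambda^M_j[u^{P,n+1}]_\pm\,\theta^{P,n}_{j\pm1/2} + L^P\lambda^M_j[u^{P,n+1}]$; writing $\lambda^M_j[u^{P,n+1}]_\pm = \lambda^M_j[u^{P,n+1}]_\pm + L^P - L^P$ and using $\theta^{P,n}_{j\pm1/2}+L^P\ge 0$, I can dominate $f(\pa_t u^{P,\eps}_M)$ pointwise by a convex combination (in the spirit of Lemma~\ref{convexity inequality of f}, or more simply by $f(ab)\le f(a)\cdot(\text{something})+\dots$ using $f(xy)\le y f(x)+ (\dots)$ for the product of the bounded factor $|\lambda^M_j|\le 5\|\K\|_{L^1}\|u^P_M\|_\infty$ with $\theta^{P,n}_{j\pm1/2}+L^P$). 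Concretely, the cleanest route is: $\pa_t u^{P,\eps}_M = (\sigma^P_M\text{-convolution factor})\cdot(\pa_x u^{P,\eps}_M+L^P)$ at the discrete level, the first factor is bounded in $L^\infty$ by $C(\|v_0\|_{L^\infty},T,\|\K\|_{L^1})$ using~\eqref{second property}/\eqref{Linftybounddis} and Lemma~\ref{kpdelta less than k}, and $L\log L$ is stable under multiplication by $L^\infty$ functions (with norm multiplied by the sup-norm, up to the $e+$ shift in the definition), so the already-established bound on $\pa_x u^{P,\eps}_M$ in $L^\infty(0,T;L\log L)$ transfers to $\pa_t u^{P,\eps}_M$. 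The main obstacle is the bookkeeping in this last step: making the ``$L\log L$ times $L^\infty$'' estimate precise with explicit constants (the function $f$ is not exactly homogeneous, and the cutoff at $1/e$ and the $e+|f|$ inside the Zygmund norm must be handled), and ensuring the convexity/Jensen manipulations for $\pa_t$ genuinely produce a bound with the same structure and the same ($\eps$-, $P$-, $M$-independent) constant $C_{\rm GE}$ as for $\pa_x$; I expect this to require the elementary inequalities for $f$ collected in Lemma~\ref{technical} and a careful but routine use of Lemma~\ref{Llog L estimate}.
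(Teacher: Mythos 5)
Your proposal is correct and follows essentially the same route as the paper: Jensen/convexity to transfer the discrete entropy bound~\eqref{gradentdis} to the $Q^1$ reconstruction, a Jensen-plus-Lemma~\ref{Llog L estimate} bound on the initial term $\sum_i\Delta x\,f(\theta^{P,0}_{i+1/2}+L^P)$ (the paper's Lemma~\ref{lem.boundinitent}), the TV estimate for the $L^1$ control, and inequality~\eqref{w in llogl} to convert $\int f$ plus $L^1$ bounds into $L\log L$ bounds. For the time derivative the paper uses exactly the pointwise domination $|\tau^{P,n+1/2}_i|\le \Lambda\,(\theta^{P,n}_{i\pm1/2}+L^P)$ together with $f(\gamma\theta)\le \gamma f(\theta)+\theta f(\gamma)$, which is one of the two equivalent mechanisms you describe, so no substantive difference remains.
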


The main idea to prove Proposition~\ref{prop.unif}, following for instance~\cite{2}, is to apply estimates~\eqref{f in llogl}-\eqref{w in llogl} of Lemma~\ref{Llog L estimate}. To do so, roughly speaking, we have to establish an uniform $L^\infty(0,T;L^1(I_P))$ estimates on $\pa_x u^{P,\eps}_M$ and $\pa_t u^{P,\eps}_M$, as well as, some $\eps$-uniform bounds on
\begin{align*}
    \int_{I_P} f\left(\pa_x u^{P,\eps}_M(x,t)+L^P\right) \, \dd x, \quad \int_{I_P} f\left(\left|\pa_t u^{P,\eps}_M(x,t)\right|\right) \, \dd x, \quad \mbox{for a.e. }t\in (0,T).
\end{align*}
While the $L^\infty(0,T;L^1(I_P))$ estimates will be consequences of~\eqref{Linftybounddis}--\eqref{TVestim} and the definition~\eqref{defQ1ext} of $u^{P,\eps}_M$, the former bounds are consequences of the discrete gradient entropy estimate~\eqref{gradentdis}. However, let us notice that the first term in the right hand side of~\eqref{gradentdis} depends on $\Delta x$ through the term $f(\theta^{P,0}_{i+1/2}+L^P)$. Therefore, we need to establish a uniform~w.r.t. $\eps$ estimate on this term.

\begin{lemma}\label{lem.boundinitent}
Let the assumptions of Theorem~\ref{thm.scheme} hold. Then, there exists a constant $C_0>0$ 
only depending on $v_0$ 
 such that
\begin{align*}
    I_0=\sum_{i\in\I} \Delta x \, f\left(\theta^{P,0}_{i+1/2}+L^P\right) \leq C_0, 
\end{align*}
where $L^P$ is defined in \eqref{def.LP}. 

\end{lemma}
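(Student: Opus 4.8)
The goal is to bound $I_0 = \sum_{i\in\I} \Delta x\, f(\theta^{P,0}_{i+1/2}+L^P)$ independently of $\eps = (\Delta x,\Delta t)$. The natural plan is to compare the discrete quantity $\theta^{P,0}_{i+1/2}+L^P$ with the average of the continuous density $\pa_x u^P_0 + L^P = \pa_x v_0$ over the cell $[x_i,x_{i+1}]$, and then exploit convexity of $f$ together with the a priori control on $\pa_x v_0$ coming from assumption~\textbf{(H1)}.

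First I would observe, using the definition~\eqref{schemeIC} of the initial data $u^{P,0}_i = u^P_0(x_i)$ and the definition~\eqref{def.dis.grad} of the discrete gradient, together with~\eqref{1.initperiod}, that
\begin{align*}
\theta^{P,0}_{i+1/2}+L^P = \frac{u^P_0(x_{i+1})-u^P_0(x_i)}{\Delta x} + L^P = \frac{v_0(x_{i+1})-v_0(x_i)}{\Delta x} = \frac{1}{\Delta x}\int_{x_i}^{x_{i+1}} \pa_x v_0(x)\,\dd x.
\end{align*}
Thus each $\theta^{P,0}_{i+1/2}+L^P$ is exactly the mean value of $\pa_x v_0$ over the cell $[x_i,x_{i+1}]$. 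Since $f$ is convex on $\R_+$ (this is where the modification~\eqref{1.deff} of $g$ into the nonnegative convex $f$ is used) and $\pa_x v_0 \ge 0$ by~\textbf{(H1)}, Jensen's inequality gives
\begin{align*}
\Delta x \, f\!\left(\theta^{P,0}_{i+1/2}+L^P\right) = \Delta x \, f\!\left(\frac{1}{\Delta x}\int_{x_i}^{x_{i+1}} \pa_x v_0\,\dd x\right) \leq \int_{x_i}^{x_{i+1}} f\!\left(\pa_x v_0(x)\right)\,\dd x.
\end{align*}
Summing over $i\in\I$ and using that $v_0$ is $2P$-periodic in the sense that the cells tile $[-P,P)$ (more precisely, that $\pa_x u^P_0$ restricted to $[-P,P)$ agrees with $\pa_x v_0 - L^P$, so $\pa_x u^P_0 + L^P = \pa_x v_0$ there), we obtain $I_0 \leq \int_{-P}^{P} f(\pa_x v_0(x))\,\dd x$.

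It then remains to bound $\int_{-P}^{P} f(\pa_x v_0)\,\dd x$ by a constant depending only on $v_0$, uniformly in $P\ge 1$. Since $f(x) \leq x\ln(x) + \tfrac{1}{e} \leq |x|\ln(e+|x|) + \tfrac1e$ for $x \ge 1/e$ and $f$ vanishes on $[0,1/e]$, one has $f(\pa_x v_0) \leq |\pa_x v_0|\ln(e+|\pa_x v_0|)$ pointwise (up to the harmless constant, which only contributes on the set where $\pa_x v_0 \ge 1/e$, of finite measure controlled by $\|\pa_x v_0\|_{L^1}$). Hence $\int_{\R} f(\pa_x v_0)\,\dd x \leq \int_\R |\pa_x v_0|\ln(e+|\pa_x v_0|)\,\dd x + \tfrac1e\|\pa_x v_0\|_{L^1(\R)} <\infty$ by the Zygmund-space membership in~\textbf{(H1)}; this defines the constant $C_0$, which depends only on $v_0$ (through $\|\pa_x v_0\|_{L^1(\R)}$ and $\|\pa_x v_0\|_{L\log L(\R)}$) and is in particular independent of $\eps$ and of $P$.

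The only mildly delicate point — and the one I would be most careful about — is the convexity/Jensen step: one must check that $f$ from~\eqref{1.deff} is genuinely convex and nonnegative on all of $\R_+$ (it is: it is $0$ on $[0,1/e]$, equals $x\ln x + 1/e$ on $[1/e,\infty)$, and the two pieces match in value and in one-sided derivative at $x=1/e$ since $(x\ln x)' = \ln x + 1 = 0$ there), so that Jensen applies without boundary issues. Everything else is a routine comparison of a Riemann-type sum with the corresponding integral, and the passage from $f$ to the $L\log L$ integrand.
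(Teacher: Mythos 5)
Your proof is correct, and its central step coincides with the paper's: both identify $\theta^{P,0}_{i+1/2}+L^P$ as the cell average of the initial density and apply Jensen's inequality for the convex, nonnegative function $f$ of~\eqref{1.deff} (your check that the two branches of $f$ match in value and derivative at $x=1/e$ is exactly the point that makes this legitimate). Where you diverge is in how the resulting integral is bounded: the paper keeps $\int_{I_P} f(\pa_x u^P_0+L^P)$ and controls it through the abstract estimate~\eqref{f in llogl} of Lemma~\ref{Llog L estimate}, the bound~\eqref{estimL.LlogL} on $\|L^P\|_{L\log L(I_P)}$, and the uniform bounds of Lemma~\ref{lem.init.perio.to.nonperiod}; you instead use the identity $\pa_x u^P_0+L^P=\pa_x v_0$ on $[-P,P)$ (valid for the choice~\eqref{1.initperiod}) to reduce everything to $\int_{\R} f(\pa_x v_0)\,\dd x$, which is finite directly from the definition~\eqref{defLlogL} of the Zygmund space in \textbf{(H1)}. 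Your route is more elementary and avoids the norm manipulations, at the cost of relying explicitly on the specific form~\eqref{1.initperiod} of the initial data and on \textbf{(H1)} — but the paper's proof implicitly relies on these as well (via Lemma~\ref{lem.init.perio.to.nonperiod} and the claim that $C_0$ depends only on $v_0$), so this is not a gap. Both arguments yield a constant independent of $\eps$, $P\ge 1$ and $M$, as required.
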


\begin{proof}
Thanks to the convexity of $f$, we have, for any $i \in \I$,
\begin{align*}
f\left(\theta_{i+\frac{1}{2}} ^{p, 0} + L^P\right) =  f\left(\frac{1}{\Delta x}\int_{x_i} ^{x_{i+1}}\left[\partial_x u_0^P (y)+ L^P\right] \, \dd y\right)\leq \frac{1}{\Delta x}\int_{x_i} ^{x_{i+1}} f\left(\partial_x u_0^P (y) + L^P\right) \, \dd y.
\end{align*}
Applying~\eqref{f in llogl} in Lemma~\ref{Llog L estimate} yields
\begin{align*}
I_0 &\leq 1 + \|\pa_x u^P_0+ L^P\|_{L\,\log\,L(I_P)} + \left(\|\pa_x u^P_0\|_{L^1(I_P)} + 2PL^P\right) \, \ln\left(1+\|\pa_x u^P_0+L^P\|_{L\,\log\,L(I_P)}\right), \\
&\leq 1 + \|\pa_x u^P_0\|_{L\,\log\,L(I_P)}+ \|L^P\|_{L\,\log\,L(I_P)} \\
&\phantom{xxxxxxxxxxxxxx}+ \left(\|\pa_x u^P_0\|_{L^1(I_P)} + 2PL^P\right) \, \ln\left(1+\|\pa_x u^P_0\|_{L\,\log\,L(I_P)}+\|L^P\|_{L\,\log\,L(I_P)}\right).     
\end{align*}
Thanks to the standard properties of the norms and \eqref{w in llogl} in Lemma~\ref{Llog L estimate} we can see that 

\begin{equation}\label{estimL.LlogL}\begin{array}{ll}
\|L^P\|_{L\,\log\,L(I_P)}=L^P\|1\|_{L\,\log\,L(I_P)}  &
\displaystyle\leq L^P\left(1+ 2P \, \ln(1+e^2) + \int_{I_P} f(1)\, \dd x\right), 
\\ &
\displaystyle \leq \|v_0\|_{L^\infty(\R)}\left(1+2\ln(1+e^2)+\frac{2}{e}\right). 
\end{array}\end{equation}
Moreover Lemma~\ref{lem.init.perio.to.nonperiod} (see~\eqref{estimationL1P}-\eqref{estimationLlogLP}) states that 
$\|\pa_x u^P_0\|_{L\,\log\,L(I_P)}$ and $\|\pa_x u^P_0\|_{L^1(I_P)}$ are uniformly bounded with respect to $P$ and $M$. This fact, joins to~\eqref{estimL.LlogL}  
proves that $I_0$ is  uniformly bounded (with respect to $P$ and $M$) and completes the proof of this lemma. 
\end{proof}

We are now in position to prove Proposition~\ref{prop.unif}. For this purpose, we will split our proof in several steps.

 \subsubsection{$L^\infty(0,T;L^1(I_P))$ bounds on the gradient and time derivative of $u^{P,\eps}_M$}

\begin{lemma}\label{lem.LinftyL1estime}
Let the assumptions of Theorem~\ref{thm.scheme} hold. Then, there exists a constant $C_1>0$ only depending on $v_0$, $T$ and $\K$ such that
\begin{align}\label{LinftyL1.paxu.patu}
    \|\pa_x u^{P,\eps}_M\|_{L^\infty(0,T;L^1(I_P))} + \|\pa_t u^{P,\eps}_M\|_{L^\infty(0,T;L^1(I_P))} \leq C_1.
\end{align}
\end{lemma}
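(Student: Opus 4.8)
The plan is to use the explicit bilinear ($Q^1$) structure of $u^{P,\eps}_M$ given by~\eqref{defQ1ext} on each space-time cell $[x_i,x_{i+1}]\times[t_n,t_{n+1}]$. A direct differentiation shows that on such a cell $\pa_x u^{P,\eps}_M(x,t) = \frac{t-t_n}{\Delta t}\,\theta^{P,n+1}_{i+1/2} + \left(1-\frac{t-t_n}{\Delta t}\right)\theta^{P,n}_{i+1/2}$, a convex combination in time of the discrete gradients~\eqref{def.dis.grad}, while $\pa_t u^{P,\eps}_M(x,t) = \frac{x-x_i}{\Delta x}\cdot\frac{u^{P,n+1}_{i+1}-u^{P,n}_{i+1}}{\Delta t} + \left(1-\frac{x-x_i}{\Delta x}\right)\frac{u^{P,n+1}_i-u^{P,n}_i}{\Delta t}$, a convex combination in space of the discrete time increments. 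Integrating cellwise, the whole estimate reduces to bounding, uniformly in $n$ (and in $\eps$, $P$, $M$), the two quantities $TV(u^{P,n}) = \sum_{i\in\I}\Delta x\,|\theta^{P,n}_{i+1/2}|$ and $\Delta x \sum_{i\in\I}\left|\frac{u^{P,n+1}_i-u^{P,n}_i}{\Delta t}\right|$.

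For the spatial derivative, fix $t\in[t_n,t_{n+1}]$; the triangle inequality gives $\|\pa_x u^{P,\eps}_M(\cdot,t)\|_{L^1(I_P)} \leq \frac{t-t_n}{\Delta t}\,TV(u^{P,n+1}) + \left(1-\frac{t-t_n}{\Delta t}\right)TV(u^{P,n}) \leq \max_{0\leq n\leq N_T} TV(u^{P,n})$. The discrete total variation estimate~\eqref{TVestim} bounds this by $TV(u^P_0)\,e^{10(L^P)T\|\K\|_{L^1(\R)}}$, and then $TV(u^{P,0})\leq\|\pa_x u^P_0\|_{L^1(I_P)}\leq 4\|v_0\|_{L^\infty(\R)}$ by~\eqref{estimationL1P}, together with $L^P\leq\|v_0\|_{L^\infty(\R)}$ for $P\geq 1$ (recall~\eqref{def.LP}), yields a bound depending only on $v_0$, $T$ and $\K$.

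For the time derivative, I would use the scheme~\eqref{numerical compacted scheme}, which reads $\frac{u^{P,n+1}_j-u^{P,n}_j}{\Delta t} = \lambda^M_j[u^{P,n+1}]_+\,\theta^{P,n}_{j+1/2} - \lambda^M_j[u^{P,n+1}]_-\,\theta^{P,n}_{j-1/2} + L^P\lambda^M_j[u^{P,n+1}]$. From the definition~\eqref{h(x)} of $\lambda^M_j$, the bound $\sum_{k\in\I}\Delta x\,|\sigma^P_{M,k}|\leq 5\|\K\|_{L^1(\R)}$ of Lemma~\ref{kpdelta less than k} (estimate~\eqref{Normcesaro}) and the discrete $L^\infty$ bound~\eqref{Linftybounddis}, one has $|\lambda^M_j[u^{P,n+1}]| \leq 5\|\K\|_{L^1(\R)}\,\|u^P_0\|_{L^\infty(I_P)}\,e^{10(L^P)T\|\K\|_{L^1(\R)}}$ for all $j\in\I$. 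Multiplying the scheme relation by $\Delta x$, summing over $j\in\I$, using $|\lambda^M_j[\cdot]_\pm|\leq|\lambda^M_j[\cdot]|$, the identities $\sum_j\Delta x\,|\theta^{P,n}_{j\pm1/2}| = TV(u^{P,n})$ and $\sum_j\Delta x = 2P$ with $2PL^P = v_0(P)-v_0(-P)\leq 2\|v_0\|_{L^\infty(\R)}$, gives $\Delta x\sum_j\left|\frac{u^{P,n+1}_j-u^{P,n}_j}{\Delta t}\right| \leq 5\|\K\|_{L^1(\R)}\|u^P_0\|_{L^\infty(I_P)}e^{10(L^P)T\|\K\|_{L^1(\R)}}\bigl(2\,TV(u^{P,n}) + 2\|v_0\|_{L^\infty(\R)}\bigr)$. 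Since integrating the convex combination for $\pa_t u^{P,\eps}_M$ over a cell produces exactly $\frac{\Delta x}{2}\bigl(|\tfrac{u^{P,n+1}_{i+1}-u^{P,n}_{i+1}}{\Delta t}| + |\tfrac{u^{P,n+1}_i-u^{P,n}_i}{\Delta t}|\bigr)$, summation over $i\in\I$ and periodicity show that $\|\pa_t u^{P,\eps}_M(\cdot,t)\|_{L^1(I_P)}$ equals $\Delta x\sum_j\left|\frac{u^{P,n+1}_j-u^{P,n}_j}{\Delta t}\right|$ for $t\in(t_n,t_{n+1})$; it is therefore bounded by the same quantity. Using once more~\eqref{TVestim}, $\|u^P_0\|_{L^\infty(I_P)}\leq 2\|v_0\|_{L^\infty(\R)}$ from~\eqref{estimationLinP}, and $L^P\leq\|v_0\|_{L^\infty(\R)}$, this is a constant depending only on $v_0$, $T$ and $\K$.

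Summing the two contributions produces the constant $C_1$ in~\eqref{LinftyL1.paxu.patu}. I do not expect a genuine obstacle in this lemma: the only care needed is the bookkeeping of the convex-combination structure of the $Q^1$ extension and tracking every constant back, through Lemmas~\ref{lem.init.perio.to.nonperiod} and~\ref{kpdelta less than k} and estimate~\eqref{TVestim}, to bounds that are independent of $P$, $M$ and $\eps$ — a point that is needed for the later uniform estimates used in Proposition~\ref{prop.PMinfty}.
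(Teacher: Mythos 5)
Your proof is correct and follows essentially the same route as the paper: the $Q^1$ convex-combination structure reduces the space derivative to the discrete total variation bound~\eqref{TVestim}, and the time derivative, via the scheme relation and Lemma~\ref{kpdelta less than k}, to the same bound plus $2PL^P\le 2\|v_0\|_{L^\infty(\R)}$, with all constants traced back to $v_0$, $T$ and $\K$ exactly as in the paper. (One nitpick: after the triangle inequality inside each cell, $\|\pa_t u^{P,\eps}_M(\cdot,t)\|_{L^1(I_P)}$ is bounded by, not equal to, $\Delta x\sum_j\bigl|\tfrac{u^{P,n+1}_j-u^{P,n}_j}{\Delta t}\bigr|$ — which is all you actually use.)
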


\begin{proof}
We first notice, thanks to the definition~\eqref{defQ1ext} of $u^{P,\eps}_M$, that for any $(x,t) \in (x_i,x_{i+1}) \times [t_n , t_{n+1}]$ we have
\begin{equation}\label{uepsilonx}
\partial_x u^{P, \eps}_{M} (x,t) = \bigg(\frac{t - t_n}{\Delta t}\bigg) \,\theta_{i +1/2}^{P, n + 1} + \bigg(1 - \frac{t - t_n}{\Delta t}\bigg) \, \theta_{i +1/2}^{P, n }.
\end{equation}
Therefore, we obtain for a given $t \in [t_n, t_{n + 1}]$ (for some $0\leq n \leq N_T-1)$
\begin{multline*}
\int_{I_P} \left|\partial_x u^{P, \eps}_{M}(x,t)\right|\, \dd x = \sum_{i \in \I} \int_{x_i}^{x_{i + 1}} \left|\partial_x u^{P,\eps}_M(x,t)\right|\, \dd x\\ 
\leq \sum_{i\in \I}  \Delta x\, \bigg(\frac{t - t_n}{\Delta t}\bigg)\, \left|\theta_{i +1/2}^{P, n + 1}\right| + \sum_{i\in \I} \Delta x \, \bigg(1 - \frac{t - t_n}{\Delta t}\bigg) \, \left|\theta_{i +1/2}^{P, n }\right|,
\end{multline*}
so that
\begin{multline*}
     \int_{I_P} \left|\partial_x u^{P, \eps}_{M}(x,t)\right|\, \dd x \leq \sum_{i\in \I} \Delta x \, \left|\theta_{i +1/2}^{P, n + 1}\right| + \sum_{i\in \I} \Delta x \, \left|\theta_{i +1/2}^{P, n }\right| \\
     = \sum_{i\in\I} \left(\left|u^{P,n+1}_{i+1}-u^{P,n+1}_i\right|+\left|u^{P,n}_{i+1}-u^{P,n}_i\right|\right).
\end{multline*}
Applying estimate~\eqref{TVestim} in Theorem~\ref{thm.scheme}, we end up with
\begin{equation}\label{bounded ux}
\|\partial_x u^{P, \eps}_M\|_{L^{\infty}(0, T; L^1(I_P))} \leq 2\, TV\left(u^P_0\right) \,e^{10(L^P) \, T \, \|\K\|_{L^1(\mathbb{R})}}.
\end{equation}
Now, let us define:
\begin{align}\label{def.tau}
\tau_i^{P, n +1/2} := \frac{u_i^{P, n + 1} - u_i^{P, n}}{\Delta t}, \quad \forall i \in \I, \, 0\leq n \leq N_T-1. 
\end{align}
Then, for $(x,t) \in [x_i,x_{i+1}] \times (t_n, t_{n + 1})$, we have
\begin{equation}\label{decompo.patuPeps}
\partial_t u^{P, \eps}_M(x,t) = \bigg(\frac{x - x_i}{\Delta x}\bigg) \, \tau_{i + 1}^{P, n +1/2}+\bigg( 1 - \frac{x - x_i}{\Delta x}\bigg)\, \tau_i^{P, n +1/2}. 
\end{equation}
Therefore, it holds
\begin{align*}
\int_{I_P} \left|\partial_t u^{P, \eps}_M(x,t)\right| \,\dd x \leq \sum_{i\in\I} \int_{x_i}^{x_{i+1}} \left( \bigg(\frac{x - x_i}{\Delta x}\bigg) \, \left|\tau_{i + 1}^{P, n +1/2}\right|  + \bigg( 1 - \frac{x - x_i}{\Delta x}\bigg) \,\left|\tau_i^{P, n +1/2} \right| \right)\, \dd x,
\end{align*}
so that
\begin{equation*}
\int_{I_P} \left|\partial_t u^{P, \eps}_M(x,t)\right| \,\dd x \leq \frac12 \sum_{i\in\I} \Delta x \, \left(\left|\tau_{i}^{P, n +1/2}\right|+\left|\tau_{i + 1}^{P, n +1/2}\right|\right)=: \frac12(J_4+J_5).
\end{equation*}
Using the definition of the scheme, we notice that
\begin{equation}\label{tau in theta}
\tau_i^{P, n +1/2} = \lambda_i[u^{P, n + 1}]_+ \, \left(\theta_{i +1/2}^{P,n} + L^P\right) - \lambda_i[u^{P,n + 1}]_- \, \left(\theta_{i-1/2}^{P, n} + L^P\right).
\end{equation}
Hence, thanks to Lemma~\ref{kpdelta less than k}, the $L^\infty$ bound~\eqref{Linftybounddis} and the discrete total variation estimate~\eqref{TVestim}, we obtain
\begin{align*}
J_4 &\leq \sum_{i\in\I} \lambda_i[u^{P,n+1}]_+ \, \left(\left|u^{P,n}_{i+1}-u^{P,n}_i\right|+ L^P \, \Delta x\right) + \sum_{i\in\I} \lambda_i[u^{P,n+1}]_- \, \left(\left|u^{P,n}_i-u^{P,n}_{i-1}\right| + L^P \, \Delta x \right)\\
&\leq 5 \|u^P_0\|_{L^\infty(I_P)} \, \|\K\|_{L^1(\R)} \, e^{10(L^P)T \, \|\K\|_{L^1(\R)}} \, \left(4PL^P + \sum_{i\in\I} \left(\left|u^{P,n}_{i+1}-u^{P,n}_i\right|+\left|u^{P,n}_i-u^{P,n}_{i-1}\right|\right)\right)\\
&\leq 10 \|u^P_0\|_{L^\infty(I_P)} \, \|\K\|_{L^1(\R)} \, e^{10(L^P)T \, \|\K\|_{L^1(\R)}} \, \left(2PL^P + TV(u^P_0) \, e^{10(L^P)T \, \|\K\|_{L^1(\R)}} \right).
\end{align*}
We readily obtain a similar bound for the term $J_5$, such that
\begin{equation}\begin{array}{ll}\label{ut}
\displaystyle\int_{I_P} \left|\partial_t u^{P, \eps}_M(x,t)\right| \,\dd x \\\leq  10 \|u^P_0\|_{L^\infty(I_P)} \, \|\K\|_{L^1(\R)} \, e^{10(L^P)T \, \|\K\|_{L^1(\R)}} \, \left(2PL^P + TV(u^P_0) \, e^{10(L^P)T \, \|\K\|_{L^1(\R)}} \right).
\end{array}\end{equation}
Moreover, from the definition of $L^P$ in 
\eqref{def.LP}, we know  that (for $P\ge 1$)
\begin{equation}\label{estimationLPPLP}L^P  \leq \|v_0\|_{L^\infty(\R)},  
\quad \mbox{and}\quad PL^P  \leq \|v_0\|_{L^\infty(\R)} \end{equation}
as well as, from Lemma \ref{lem.init.perio.to.nonperiod} (see \eqref{estimationLinP}-\eqref{estimationL1P}), we have 
$ \|u^P_0\|_{L^\infty(I_P)}$ and $TV(u^P_0)$ are uniformly bounded with respect to $P$ and $M$. 
This involves, gathering~\eqref{bounded ux} and~\eqref{ut}, that there exists  a constant
$C_1 >0$ independent of $P$ and $M$ such that~\eqref{LinftyL1.paxu.patu} holds.
\end{proof}

\subsubsection{Gradient entropy estimates}

\begin{lemma}\label{lem.fpartial}
Let the assumptions of Theorem~\ref{thm.scheme} hold. Then, there exists a constant $C_2>0$ only depending on  $v_0$, $T$ and $\K$ such that
\begin{align}
    \int_{I_P} f\left(\pa_x u^{P,\eps}_M(x,t)+L^P \right) \, \dd x + \int_{I_P} f\left(\left|\pa_t u^{P,\eps}_M(x,t)\right| \right) \, \dd x \leq C_2, \quad \mbox{for a.e. }t\in(0,T).
\end{align}
\end{lemma}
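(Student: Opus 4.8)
The plan is to bound the two integrals separately using the convexity of $f$ to pass from the $Q^1$ extension to convex combinations of the discrete quantities, and then to invoke the discrete gradient entropy estimate~\eqref{gradentdis} together with Lemma~\ref{lem.boundinitent} for the space derivative, and a direct computation from~\eqref{tau in theta} for the time derivative.

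\textbf{Step 1: The gradient term.} Fix $t \in [t_n, t_{n+1}]$ for some $0 \le n \le N_T - 1$. By~\eqref{uepsilonx}, for $x \in (x_i, x_{i+1})$ the value $\pa_x u^{P,\eps}_M(x,t) + L^P$ is the convex combination $\alpha (\theta^{P,n+1}_{i+1/2} + L^P) + (1-\alpha)(\theta^{P,n}_{i+1/2} + L^P)$ with $\alpha = (t-t_n)/\Delta t \in [0,1]$; note all these quantities are nonnegative by Theorem~\ref{thm.scheme}. Since $f$ is convex and nonnegative, Jensen's inequality gives $f(\pa_x u^{P,\eps}_M(x,t)+L^P) \le \alpha f(\theta^{P,n+1}_{i+1/2}+L^P) + (1-\alpha) f(\theta^{P,n}_{i+1/2}+L^P)$ pointwise. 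Integrating over $I_P$ yields
\begin{align*}
\int_{I_P} f\left(\pa_x u^{P,\eps}_M(x,t)+L^P\right) \dd x \le \alpha \sum_{i\in\I} \Delta x\, f\left(\theta^{P,n+1}_{i+1/2}+L^P\right) + (1-\alpha) \sum_{i\in\I} \Delta x\, f\left(\theta^{P,n}_{i+1/2}+L^P\right).
\end{align*}
Both sums on the right are controlled, uniformly in $n$, by the discrete gradient entropy estimate~\eqref{gradentdis}, whose right hand side is $\sum_{i\in\I} \Delta x\, f(\theta^{P,0}_{i+1/2}+L^P) + \zeta_0\, TV(u^P_0) \le C_0 + \zeta_0\, TV(u^P_0)$ by Lemma~\ref{lem.boundinitent}; and $TV(u^P_0)$ is uniformly bounded in $P$, $M$ by Lemma~\ref{lem.init.perio.to.nonperiod}. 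Hence this term is $\le C$ with $C$ depending only on $v_0$, $T$, $\K$.

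\textbf{Step 2: The time-derivative term.} For $(x,t) \in [x_i,x_{i+1}] \times (t_n,t_{n+1})$, by~\eqref{decompo.patuPeps} we have $\pa_t u^{P,\eps}_M(x,t) = \beta \tau^{P,n+1/2}_{i+1} + (1-\beta)\tau^{P,n+1/2}_i$ with $\beta = (x-x_i)/\Delta x \in [0,1]$. Now $f$ is nondecreasing, so $f(|\pa_t u^{P,\eps}_M(x,t)|) \le f(\beta|\tau^{P,n+1/2}_{i+1}| + (1-\beta)|\tau^{P,n+1/2}_i|) \le \beta f(|\tau^{P,n+1/2}_{i+1}|) + (1-\beta) f(|\tau^{P,n+1/2}_i|)$ by convexity. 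After integrating over $I_P$ and using periodicity, it suffices to bound $\sum_{i\in\I}\Delta x\, f(|\tau^{P,n+1/2}_i|)$. From~\eqref{tau in theta} we have $|\tau^{P,n+1/2}_i| \le |\lambda_i[u^{P,n+1}]| \max(\theta^{P,n}_{i+1/2}+L^P, \theta^{P,n}_{i-1/2}+L^P) \le |\lambda_i[u^{P,n+1}]| \big((\theta^{P,n}_{i+1/2}+L^P)+(\theta^{P,n}_{i-1/2}+L^P)\big)$, and $|\lambda_i[u^{P,n+1}]| \le 5\|u^P_0\|_{L^\infty(I_P)} e^{10(L^P)T\|\K\|_{L^1(\R)}} \|\K\|_{L^1(\R)} =: \Lambda$ by~\eqref{Linftybounddis} and Lemma~\ref{kpdelta less than k}. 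The monotonicity of $f$ then reduces matters to estimating $\sum_{i\in\I}\Delta x\, f\big(\Lambda((\theta^{P,n}_{i+1/2}+L^P)+(\theta^{P,n}_{i-1/2}+L^P))\big)$.

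\textbf{Step 3: Controlling $f$ of a scaled argument.} The main obstacle is that $f(\Lambda\,(\cdot))$ is not bounded by a constant times $f(\cdot)$ because of the $\ln$; one picks up a correction term. The standard device is the elementary inequality $f(\lambda\theta) \le \lambda f(\theta) + C_\lambda \theta$ (valid for $\lambda \ge 1$, with $C_\lambda$ depending on $\lambda$; for $\lambda < 1$ one has $f(\lambda\theta)\le \lambda f(\theta)$ directly, cf. the spirit of Lemma~\ref{technical}). Also $f(a+b) \le f(2a) + f(2b)$ since $a+b \le \max(2a,2b)$ and $f$ is nondecreasing. Combining, $\sum_{i\in\I}\Delta x\, f(|\tau^{P,n+1/2}_i|)$ is bounded by a constant multiple of $\sum_{i\in\I}\Delta x\, f(\theta^{P,n}_{i+1/2}+L^P)$ plus a constant multiple of $\sum_{i\in\I}\Delta x\,(\theta^{P,n}_{i+1/2}+L^P) = \|\pa_x u^{P,n}\|_{L^1} + 2PL^P$. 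The first piece is again controlled by~\eqref{gradentdis} and Lemma~\ref{lem.boundinitent}; the second by~\eqref{TVestim}, $PL^P \le \|v_0\|_{L^\infty(\R)}$, and Lemma~\ref{lem.init.perio.to.nonperiod}. All constants are uniform in $\eps$, $P$, $M$, and depend only on $v_0$, $T$, $\K$. Adding the bounds from Steps~1 and~2 gives the claimed $C_2$, which completes the proof.
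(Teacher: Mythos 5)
Your proof is correct and follows essentially the same route as the paper: convexity of $f$ to pass from the $Q^1$ extension to the discrete quantities, the discrete gradient entropy estimate~\eqref{gradentdis} with Lemma~\ref{lem.boundinitent} for the space part, and relation~\eqref{tau in theta} together with the scaling inequality $f(\gamma\theta)\le\gamma f(\theta)+\theta f(\gamma)$ for the time part. The only (harmless) deviation is in Step~3: the paper observes that exactly one of $\lambda_i[\cdot]_\pm$ is nonzero, so $|\tau_i^{P,n+1/2}|\le\Lambda(\theta^{P,n}_{i\pm1/2}+L^P)$ for a single choice of sign, which avoids your extra doubling inequality $f(a+b)\le f(2a)+f(2b)$.
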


\begin{proof}
First, for $(x,t)\in(x_i,x_{i+1})\times[t_n,t_{n+1}]$ with $i\in \I$ and $0\le n \le N_T-1$, we write
\begin{equation}\label{uepsilonx + L}
\partial_x u^{P, \eps}_M(x,t)  + L^P=  \bigg(\frac{t - t_n}{\Delta t}\bigg) \, \left(\theta_{i +1/2}^{P, n + 1} + L^P\right) + \bigg(1 - \frac{t - t_n}{\Delta t}\bigg) \, \left(\theta_{i +1/2}^{P, n }+ L^P\right). 
\end{equation}
Now, using the convexity of $f$, we obtain
\[
f\left(\partial_x u^{P, \eps}_M(x,t)+L^P \right) \leq \bigg(\frac{t - t_n}{\Delta t}\bigg) \, f\left(\theta_{i +1/2} ^{P, n + 1} + L^P\right)+ \bigg(1 - \frac{t - t_n}{\Delta t}\bigg) \, f\left(\theta_{i +1/2}^{P, n }+ L^P\right).
\]
Therefore, by integrating in space, we have
\begin{multline*}
\int_{I_P} f\left(\partial_x u^{P, \eps}_M(x,t)+L^P \right)\, \dd x \leq \bigg(\frac{t - t_n}{\Delta t}\bigg) \, \sum_{i\in\I} \Delta x\, f\left(\theta^{P,n+1}_{i+1/2}+L^P\right)\\
+ \bigg(1 - \frac{t - t_n}{\Delta t}\bigg) \, \sum_{i\in\I} \Delta x \, f\left(\theta^{P,n}_{i+1/2}+L^P\right).
\end{multline*}
Thanks to estimate~\eqref{gradentdis} we get
\begin{align*}
\int_{I_P} f\left(\partial_x u^{P, \eps}_M(x,t)+L^P \right)\, \dd x &\leq \sum_{i\in\I} \Delta x \, f\left(\theta^{P,0}_{i+1/2}+L^P\right) + \zeta_0 TV(u^P_0), 
\end{align*}
where $\zeta_0$ is defined~\eqref{defzeta}. Applying Lemma \ref{lem.boundinitent}, we deduce that 
\begin{align*}
   \int_{I_P} f\left(\partial_x u^{P, \eps}_M(x,t)+L^P \right)\, \dd x 
   \leq C_0 + \zeta_0  TV(u^P_0)
   \le C_0 + 4\zeta_0  \|v_0\|_{L^\infty(\R)}
\end{align*}
where for the last inequality we have used Lemma~ \ref{lem.init.perio.to.nonperiod} (cf. \eqref{estimationL1P}).\\
 Let us now establish the estimate for $\pa_t u^{P,\eps}_M$. For this purpose, using~\eqref{decompo.patuPeps}, we notice that for all $(x,t) \in [x_i,x_{i+1}]\times(t_n,t_{n+1})$
\begin{align*}
      \left|\partial_t u^{P, \eps}_M(x,t)\right| &\leq \bigg(\frac{x - x_i}{\Delta x}\bigg) \,\left|\tau_{i + 1}^{P, n + 1/2}\right| + \bigg( 1 - \frac{x - x_i}{\Delta x}\bigg) \, \left|\tau_{i } ^{P, n +1/2}\right|,
\end{align*}
where we recall definition~\eqref{def.tau} of $\tau^{P,n+1/2}_i$. Hence, the convexity of $f$ yields
\begin{multline*}
    f\left(\left|\partial_t u^{p, \eps}_M(x,t)\right|\right) \leq \bigg(\frac{x - x_i}{\Delta x}\bigg)\,f\left(\left|\tau_{i + 1}^{P, n + 1/2}\right|\right) + \bigg( 1 - \frac{x - x_i}{\Delta x}\bigg) \, f\left(\left|\tau_{i } ^{P, n +1/2}\right|\right)\\
    \leq f\left(\left|\tau_{i + 1}^{P, n + 1/2}\right|\right) +  f\left(\left|\tau_{i } ^{P, n +1/2}\right|\right).
\end{multline*}
Let us for instance consider the second term in the right hand side. Then, according to the relation~\eqref{tau in theta}, we have
\begin{align*}
    \left|\tau_i^{P, n +1/2}\right| \leq \lambda_i[u^{P, n + 1}]_+ \, \left(\theta_{i +1/2}^{P,n} + L^P\right) + \lambda_i[u^{P,n + 1}]_- \, \left(\theta_{i-1/2}^{P, n} + L^P\right).
\end{align*}
Now, let us assume that $\lambda_i[u^{P,n+1}]\ge 0$. This yields
\begin{align*}
    \left|\tau_i^{P, n +1/2}\right| \leq |\lambda_i[u^{P, n + 1}]| \, \left(\theta_{i +1/2}^{P,n} + L^P\right).
\end{align*}
Then, we notice thanks to the $L^\infty$ estimate~\eqref{Linftybounddis} and the discrete total variation estimate~\eqref{TVestim}, that it holds
\begin{align*}
    \left|\tau_i^{P, n +1/2}\right| \leq \Lambda\, \left(\theta_{i +1/2}^{P,n} + L^P\right),
\end{align*}
where
\begin{align*}
    \Lambda := \max\left(1,5\|u^P_0\|_{L^\infty(I_P)} \, \|\K\|_{L^1(\R)} \, e^{10 (L^P)T \, \|\K\|_{L^1(\R)}}\right).
\end{align*}
Hence, using the elementary estimate $f(\gamma \theta) \leq \gamma f(\theta)+\theta f(\gamma)$ for all $\gamma\geq 1$ and $\theta \geq 0$, we have
\begin{align*}
    f\left(\left|\tau_i^{P, n +1/2}\right|\right) \leq \Lambda\, f\left(\theta_{i +1/2}^{P,n} + L^P\right) + \left(\theta_{i +1/2}^{P,n} + L^P\right)  \, f\left(\Lambda\right).
\end{align*}
Of course, one obtain a similar bound if $\lambda_i[u^{P,n+1}]\le 0$, so that
\begin{align}\label{bound.ftau}
    f\left(\left|\tau_i^{P, n +1/2}\right|\right) &\leq \sum_{\pm} \left[\Lambda\, f\left(\theta_{i \pm 1/2}^{P,n} + L^P\right) + \left(\theta_{i \pm 1/2}^{P,n} + L^P\right)  \, f\left(\Lambda\right) \right].
\end{align}
Thereby, we end up with
\begin{align*}
  I_1=  \int_{I_P} &f\left(\left|\pa_t u^{P,\eps}_M(x,t)\right|\right) \, \dd x \leq \sum_{i\in\I} \Delta x \, f\left(\left|\tau_{i + 1}^{P, n + 1/2}\right|\right) + \sum_{i\in\I} \Delta x\,  f\left(\left|\tau_{i } ^{P, n +1/2}\right|\right)\\
    &\leq 4\Lambda \left(C_0 + \zeta_0 \,TV(u^P_0)\right) + 8 PL^P \, f(\Lambda) + 4f(\Lambda) \, TV(u^P_0) \, e^{10 (L^P)T \, \|K\|_{L^1(\R)}},
\end{align*}
where we have used inequality~\eqref{bound.ftau}, the discrete gradient entropy inequality~\eqref{gradentdis} togther with Lemma~\ref{lem.boundinitent}, and the discrete total variation estimate~\eqref{TVestim}.\\
To complete the proof, we proceed as in the proof of Lemma \ref{lem.LinftyL1estime}. Indeed, we use \eqref{estimationLPPLP}, Lemma \ref{lem.init.perio.to.nonperiod} \eqref{estimationLinP}-\eqref{estimationL1P} and Lemma~\ref{lem.boundinitent},  in order to bound from above $I_1$ by a constant  $C_2$  independent of $M$ and $P$. 
\end{proof}

\subsubsection{Proof of Proposition~\ref{prop.unif}}

Thanks to Lemma~\ref{Llog L estimate} we have
\begin{multline*}
    \|\pa_x u^{P,\eps}_M+L^P\|_{L^\infty(0,T;L\,\log\,L(I_P))} \\ \leq 1 + \left(\|\pa_x u^{P,\eps}_M\|_{L^\infty(0,T;L^1(I_P))} + 2P L^P\right) \, \ln(1+e^2) + \sup_{t \in (0,T)} \int_{I_P} f\left(\pa_x u^{P,\eps}_M(x,t)+L^P\right) \, \dd x,
\end{multline*}
so that, arguing as in~\eqref{estimL.LlogL}, we have
\begin{align}\label{aux.conv}
    \|\pa_x u^{P,\eps}_M\|_{L^\infty(0,T;L\,\log\,L(I_P))} &\leq L^P\|1\|_{L^\infty(0,T;L\,\log\,L(I_P))} + \|\pa_x u^{P,\eps}_M+L\|_{L^\infty(0,T;L\,\log\,L(I_P))}\\
    &\leq  \|v_0\|_{L^\infty(\R)}\left(1+2\ln(1+e^2)+\frac{2}{e}\right) + \|\pa_x u^{P,\eps}_M+L\|_{L^\infty(0,T;L\,\log\,L(I_P))}.\nonumber
\end{align}
and
\begin{multline*}
    \|\pa_t u^{P,\eps}_M\|_{L^\infty(0,T;L\,\log\,L(I_P))}  \leq 1 + \|\pa_t u^{P,\eps}_M\|_{L^\infty(0,T;L^1(I_P))}  \, \ln(1+e^2) + \sup_{t \in (0,T)} \int_{I_P} f\left(\left|\pa_t u^{P,\eps}_M(x,t)\right|\right) \, \dd x.
\end{multline*}
Therefore, according to, \eqref{estimationLPPLP}, Lemma~\ref{lem.LinftyL1estime} and Lemma~\ref{lem.fpartial}, we deduce the existence of a constant ${C}_{\rm GE}>0$ only depending on $v_0$, $T$ and $\K$ such that
\begin{align*}
    \|\pa_x u^{P,\eps}_M\|_{L^\infty(0,T;L\,\log\,L(I_P))} + \|\pa_t u^{P,\eps}_M\|_{L^\infty(0,T;L\,\log\,L(I_P))} \leq {C}_{\rm GE}.
\end{align*}
This concludes the proof of Proposition~\ref{prop.unif}.

\subsection{Compactness properties}

\begin{proposition}\label{prop.compactness}
Let the assumptions of Theorem~\ref{convergence in periodic} hold. Then, there exists a function $u^P_M$ satisfying properties~\eqref{first property}--\eqref{third property} only depending on $v_0$, $T$ and $\K$,such that the family $(u^{P,m}_M)_{m\in \N}$, solution to the scheme~\eqref{schemeIC}--\eqref{h(x)}, converges uniformly, up to a subsequence, to $u^P_M$ as $m\to+\infty$. Moreover, $u^P_M \in C(I_P\times[0,T))$ and there exists a modulus of continuity $\omega(\gamma,h)$ such that for all $\gamma, \,h\geq 0$ and all $(x,t) \in I_P \times (0,T-h)$ we have
    \begin{align}\label{mod-continuite}
|u^P_M (x + \gamma,t+h) - u^P_M (x,t)| \leq 6 \,C_{\rm GE}\, \omega(\gamma,h), \quad \mbox{with } \omega(\gamma, h) =\frac{1}{\ln(1+ \frac{1}{\gamma})}+ \frac{1}{\ln(1 + \frac{1}{h})},
\end{align}
where $C_{\rm GE}>0$ is the constant appearing in~\eqref{estimationLlogLGE}.
\end{proposition}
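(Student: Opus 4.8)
The plan is to obtain $u^P_M$ as a uniform limit of a subsequence of the $Q^1$-interpolants $u^{P,\eps}_M$ defined in~\eqref{defQ1ext}, through an Arzel\`a--Ascoli argument built on the $\eps$-uniform bounds already established. First I would collect the available uniform information: estimate~\eqref{Linftybounddis} together with definition~\eqref{defQ1ext} bounds $\|u^{P,\eps}_M\|_{L^\infty(I_P\times(0,T))}$; estimate~\eqref{TVestim} and Lemma~\ref{lem.LinftyL1estime} bound $\|\pa_x u^{P,\eps}_M\|_{L^\infty(0,T;L^1(I_P))}+\|\pa_t u^{P,\eps}_M\|_{L^\infty(0,T;L^1(I_P))}$; and Proposition~\ref{prop.unif} gives the crucial bound $\|\pa_x u^{P,\eps}_M\|_{L^\infty(0,T;L\,\log\,L(I_P))}+\|\pa_t u^{P,\eps}_M\|_{L^\infty(0,T;L\,\log\,L(I_P))}\le C_{\rm GE}$, all constants depending only on $v_0$, $T$ and $\K$.

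The heart of the proof is to convert these $L\,\log\,L$ bounds into the $\eps$-uniform modulus of continuity~\eqref{mod-continuite}, using the technical lemmas of Appendix~\ref{app.techni} (as in~\cite{2}). For the spatial increment I would write $u^{P,\eps}_M(x+\gamma,t)-u^{P,\eps}_M(x,t)=\int_x^{x+\gamma}\pa_x u^{P,\eps}_M(y,t)\,\dd y$ and use the quantitative equi-integrability enjoyed by families bounded in $L\,\log\,L(I_P)$: a set of length $\gamma$ carries at most $C\,C_{\rm GE}/\ln(1+1/\gamma)$ of the $L^1$-mass, giving the $1/\ln(1+1/\gamma)$ contribution. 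For the time increment I would first note, from the $L^\infty(0,T;L^1(I_P))$ bound on $\pa_t u^{P,\eps}_M$, that $\|u^{P,\eps}_M(\cdot,t+h)-u^{P,\eps}_M(\cdot,t)\|_{L^1(I_P)}\le C\,h$; since a large pointwise value of this increment would, by the spatial modulus just obtained, force a comparably large $L^1$-mass on an interval of adjustable length, optimising that length against $h$ yields the $1/\ln(1+1/h)$ contribution. Summing the two gives~\eqref{mod-continuite} for each $u^{P,\eps}_M$, uniformly in $\eps$. I expect this conversion --- in particular the temporal part --- to be the main obstacle, although it follows the scheme of~\cite{2}.

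Given the uniform $L^\infty$ bound and the uniform modulus of continuity, Arzel\`a--Ascoli applied on $I_P\times[0,T-\delta]$ for each $\delta>0$, together with a diagonal extraction, produces a subsequence of $(u^{P,m}_M)_{m\in\N}$ converging uniformly on every compact subset of $I_P\times[0,T)$ to some $u^P_M\in C(I_P\times[0,T))$. Passing to the limit preserves the modulus of continuity~\eqref{mod-continuite}, and passing to the limit in~\eqref{Linftybounddis} gives the $L^\infty$ estimate~\eqref{second property}.

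It then remains to recover~\eqref{first property} and~\eqref{third property}. For each $t\in(0,T)$ (for $\pa_x$) and for a.e.\ $t$ (for $\pa_t$), the sequences $\pa_x u^{P,m}_M(\cdot,t)$ and $\pa_t u^{P,m}_M(\cdot,t)$ are bounded in $L\,\log\,L(I_P)$, hence equi-integrable, hence weakly relatively compact in $L^1(I_P)$ by the Dunford--Pettis theorem; since $u^{P,m}_M\to u^P_M$ uniformly, the corresponding distributional limits are $\pa_x u^P_M$ and $\pa_t u^P_M$, so these convergences hold weakly in $L^1(I_P)$. By the convexity of $f$ (see~\eqref{1.deff}) the functionals $w\mapsto\int_{I_P}f(w+L^P)\,\dd x$ and $w\mapsto\int_{I_P}f(|w|)\,\dd x$ are weakly lower semicontinuous on $L^1(I_P)$, and combining this with the bounds of Lemmas~\ref{lem.LinftyL1estime} and~\ref{lem.fpartial} controls $\int_{I_P}f(\pa_x u^P_M(x,t)+L^P)\,\dd x$, $\int_{I_P}f(|\pa_t u^P_M(x,t)|)\,\dd x$ and the $L^1$-norms of $\pa_x u^P_M(\cdot,t)$, $\pa_t u^P_M(\cdot,t)$ uniformly in $t$; estimates~\eqref{f in llogl}--\eqref{w in llogl} of Lemma~\ref{Llog L estimate} then yield~\eqref{third property}. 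Finally, $\pa_t u^P_M\in L^\infty(0,T;L\,\log\,L(I_P))$ makes $t\mapsto u^P_M(\cdot,t)$ Lipschitz with values in $L\,\log\,L(I_P)$, hence $u^P_M\in C(0,T;L\,\log\,L(I_P))$, which together with~\eqref{second property} yields~\eqref{first property} and completes the proof.
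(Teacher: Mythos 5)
Your proposal is correct and follows essentially the same route as the paper: the uniform $L\,\log\,L$ bounds of Proposition~\ref{prop.unif} are converted into the $\eps$-uniform modulus of continuity via Lemma~\ref{modulus of continuity} (whose proof you sketch rather than merely cite), Arzel\`a--Ascoli then gives the uniform limit $u^P_M$, and the estimates~\eqref{second property}--\eqref{third property} pass to the limit by weak compactness and lower semicontinuity. The only (immaterial) variation is that you obtain~\eqref{third property} through Dunford--Pettis weak-$L^1$ compactness, convexity of $f$, and Lemma~\ref{Llog L estimate}, whereas the paper invokes weak-$\star$ convergence in $L^\infty(0,T;L\,\log\,L(I_P))$ and lower semicontinuity of the norm.
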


\begin{proof}

Thanks to Proposition~\ref{prop.unif}-\eqref{estimationLlogLGE} and Lemma~\ref{modulus of continuity}, we obtain, for any $(x,t) \in I_P \times (0,T-\tau)$, the uniform w.r.t.~$\eps_m$ estimate
\begin{align}\label{mod.cont.dis}
    \left|u^{P,m}_M(x+\gamma,t+h)-u^{P,m}_M(x,t)\right| \leq 6\, {C}_{\rm GE} \, \left(\frac{1}{\ln(1+\frac{1}{\gamma})}+\frac{1}{\ln(1+\frac{1}{h})}\right).
\end{align}
Hence, applying Arzel\'a-Ascoli theorem we deduce the existence of a function $u^P_M \in C(I_P\times [0,T))$, such that the family $(u^{P,m}_M)_{m\in\N}$ converges uniformly, up to a subsequence, to $u^P_M$. Moreover, from \eqref{Linftybounddis} we can check that,  the function $u^P_M$ satisfies the $L^\infty$ bound~\eqref{second property} and the initial condition:
\begin{align*}
    u^P_M(\cdot,0) = u^P_0(\cdot).
\end{align*}
Furthermore, as a consequence of estimate \eqref{estimationLlogLGE}, we can say that 
$(\pa_x u^{P,m}_M)_{m\in\N}$ and $(\pa_t u^{P,m}_M)_{m\in\N}$ converge respectively towards 
$\pa_x u^{P}_M$ and $\pa_t u^{P}_M$ weakly-$\star$ in $L^\infty(0,T;L\,\log\,L(I_P))$. 
Therefore, by the lower semi-continuity of the norm for weak-$\star$  topology we have 
$$
   \|\partial_x u^P_M \|_{L^{\infty}(0, T; L \log L (I_P) )} + \|\partial_t u^P_M \|_{L^{\infty}(0, T; L \log L (I_P) )} \leq C_{\rm GE}.$$
   This implies that \eqref{first property} and~\eqref{third property} hold. 
Using the Lemma~\ref{modulus of continuity}  again we prove \eqref{mod-continuite} 
and conclude the proof of Proposition~\ref{prop.compactness}.
\end{proof}
\subsection{Identification of the limit}\label{sec.identi}
In order to prove Theorem~\ref{convergence in periodic}, it remains to identify the function $u^P_M$ obtained in Proposition~\ref{prop.compactness} as a solution to~\eqref{1.equPaux} in the distributional sense. For this purpose, using~\eqref{def.tau} and~\eqref{decompo.patuPeps}, for $(x,t) \in (x_i,x_{i+1})\times (t_n, t_{n + 1})$, we introduce the following functions 
\[
a_m(x) := \frac{x - x_i}{\Delta x_m}, \quad b_m(x) := 1 - \frac{x - x_i}{\Delta x_m}, \quad \lambda[u^p_M](x,t) := \left(\sigma_M^P(\cdot) \ast u^P_M(\cdot,t)\right)(x),
\]
and we have
{\small
\begin{equation}\label{decompo.conv}\begin{array}{ll}
\partial_t u^{P, m}_M(x,t) &=  a_m(x)\bigg\{ \lambda^M_{i + 1}[u^{P, n+ 1}]_+ \left(\theta_{i +3/2}^{P, n} + L^P\right) - \lambda^M_{i + 1}[u^{P, n+ 1}]_- \left(\theta_{i +1/2}^{P, n} + L^P\right)\bigg \} \\
&+ b_m(x) \bigg\{ \lambda^M_i[u^{P, n+ 1}]_+ \left(\theta_{i +1/2} ^{P, n} + L^P\right) - \lambda^M_i[u^{P, n+ 1}]_- \left(\theta_{i -1/2}^{P , n} + L^P\right)\bigg \} \\
&= \lambda[u^P_M]_+(x,t)\bigg\{ a_m(x)\left(\theta_{i +3/2}^{P, n} + L^P\right) + b_m(x) \left(\theta_{i +1/2}^{P, n} + L^P\right) \bigg\}\\
&- \lambda[u^P_M]_-(x,t)\bigg\{ a_m(x) \left(\theta_{i +1/2}^{P, n} + L^P\right) + b_m(x) \left(\theta_{i - 1/2}^{P, n} + L^P\right)\bigg\}+ e_m(x,t),
\end{array}\end{equation}}
 where
 {\footnotesize
\begin{align*}
e_m(x,t) &= a_m(x) \Bigg\{\bigg[\lambda^M_{i + 1}[u^{P, n+ 1}]_+ - \lambda[u^P_M]_+(x,t)\bigg]\left(\theta_{i +3/2}^{P,n} + L^P\right)- \bigg[\lambda^M_{i + 1}[u^{P, n+ 1}]_- - \lambda[u^P_M]_-(x,t)\bigg]\left(\theta_{i +1/2}^{P, n} + L^P\right)\bigg \} \\
&+ b_m(x) \Bigg\{ \bigg[\lambda^M_{i}[u^{P, n+ 1}]_+ - \lambda[u^P_M]_+(x,t)\bigg] \left(\theta_{i +1/2}^{P,n} + L^P\right)- \bigg[\lambda^M_i[u^{P, n+ 1}]_- - \lambda[u^P_M]_(x,t)-\bigg]\left(\theta_{i -1/2}^{P ,n} + L^P\right)\Bigg \}.
\end{align*}}
Now, let $\varphi \in C^\infty_c(I_P\times [0,T])$, using \eqref{TVestim} we have 
 \begin{multline}\label{e.eps}
       \Bigg| \int_{I_P\times[0,T]} \varphi(x,t)\, e_m(x,t) \, \dd x \dd t\Bigg| \leq 4T\|\varphi\|_{L^\infty(I_P\times[0,T])} \left(TV(u^P_0)\,e^{10 (L^P)T \, \|\K\|_{L^1(\R)}} +2PL^P\right)\\ \times \sup_{(y,\tau) \,\in\, \mathrm{supp}(\varphi)} \left(\sup_{|x_i-y|\leq \Delta x_m} \, \sup_{|t_{n+1}-\tau|\leq \Delta t_m} \left|\lambda^M_i[u^{P,n+1}]-\lambda[u^P_M](y,\tau)\right| \right).
 \end{multline}
Let $(y,\tau) \in \mathrm{supp}(\varphi)$ such that $|x_i-y|\leq \Delta x_m$ and $|t_{n+1}-\tau|\leq\Delta t_m$, we write
\begin{align*}
    \left|\lambda_i^M[u^{P,n+1}]-\lambda[u^P_M](y,\tau)\right|
    &= \left|\sum_{j\in\mathcal{I}_{N_m}} \int_{x_j}^{x_{j+1}} \sigma_M^P(x_j) u^{P,m}_M(x_i-x_j,t_{n+1}) \dd z - \int_{I_P} \sigma_M^P(z)  u^P_M(y-z,\tau)  \dd z\right|\\
    &\leq \sum_{j\in\mathcal{I}_{N_m}} \int_{x_j}^{x_{j+1}} \left|\left(\sigma_M^P(x_j)-\sigma_M^P(z)\right)\, u^{P,m}_M(x_i-x_j,t_{n+1}) \right| \, \dd z\\
    &+\sum_{j\in\mathcal{I}_{N_m}} \int_{x_j}^{x_{j+1}} \left|\sigma_M^P(z) \, \left(u^{P,m}_M(x_i-x_j,t_{n+1})-u^{P,m}_M(y-z,\tau)\right)\right|\,\dd z\\
    &+\sum_{j\in\mathcal{I}_{N_m}} \int_{x_j}^{x_{j+1}} \left|\sigma_M^P(z) \, \left(u^{P,m}_M(y-z,\tau)-u^P_M(y-z,\tau)\right)\right|\,\dd z\\
    &=: J_{m,6}+J_{m,7}+J_{m,8}.
\end{align*}
For $J_{m,6}$, using the $L^\infty$ bound~\eqref{Linftybounddis} we have 
\begin{align*}
    J_{m,6} &\leq \|u^P_0\|_{L^\infty(I_P)} \, e^{10 \, (L^P)T \, \|\K\|_{L^1(\R)}} \sum_{j\in\mathcal{I}_{N_m}} \int_{x_j}^{x_{j+1}} \left|\sigma_M^P(x_j)-\sigma_M^P(z)\right|\, \dd z \\
    &\leq \|u^P_0\|_{L^\infty(I_P)} \, e^{10 \, (L^P)T \, \|\K\|_{L^1(\R)}} \, \|\pa_x \sigma_M^P\|_{L^\infty(I_P)} \sum_{j\in\mathcal{I}_{N_m}}\, \int_{x_j}^{x_{j+1}} \left|x_j-z\right|\, \dd z\\
    &\leq P \, \Delta x_m \,\|u^P_0\|_{L^\infty(I_P)} \, e^{10 \, (L^P)T \, \|\K\|_{L^1(\R)}} \, \|\pa_x \sigma_M^P\|_{L^\infty(I_P)}. 
\end{align*}
Thus, using the relation $\sigma_M^P= (F_M \ast \K^P_M)/2P$, Young's convolution inequality and Lemma~\ref{kpdelta less than k}, it holds
\begin{align}\label{J6}
    J_{m,6} \leq  \frac{5 \,\Delta x_m}{2} \, \|u^P_0\|_{L^\infty(I_P)} \, e^{10 \, (L^P)T \, \|\K\|_{L^1(\R)}} \, \|\pa_x F_M\|_{L^\infty(I_P)} \, \|\K\|_{L^1(\R)} \to 0, \quad \mbox{as }m\to +\infty.
\end{align}
Now, for $J_{m,7}$, applying estimate~\eqref{mod.cont.dis} and Young's convolution inequality
we get 
\begin{align*}
    J_{m,7} &\leq 6 \, {C}_{\rm GE} \left(\frac{1}{\ln(1+\frac{1}{2\Delta x_m})}+\frac{1}{\ln(1+\frac{1}{\Delta t_m})}\right) \, \|\sigma_M^P\|_{L^1(I_P)}\\
    &\leq \frac{6 \, {C}_{\rm GE}}{2P} \left(\frac{1}{\ln(1+\frac{1}{2\Delta x_m})}+\frac{1}{\ln(1+\frac{1}{\Delta t_m})}\right) \, \|F_M\|_{L^1(I_P)} \, \|\K^P_M\|_{L^1(I_P)}\\
    &\leq {30 \, {C}_{\rm GE} } \left(\frac{1}{\ln(1+\frac{1}{2\Delta x_m})}+\frac{1}{\ln(1+\frac{1}{\Delta t_m})}\right) \, \|\K\|_{L^1(\R)}, 
\end{align*}
where we have used in the last inequality
 Lemma~\ref{kpdelta less than k} and the fact that $\frac{\|F_M\|_{L^1(I_P)}}{2P}=1$ Hence,
\begin{align}\label{J7}
    J_{m,7} \to 0, \quad \mbox{as }m\to+\infty.
\end{align}
Similarly, we can verify that 
\begin{align}\label{J8}
    J_{m,8} \le  \|u_M^{P,m}-u_M^{P} \|_{L^\infty(I_P)} \|\sigma_M^P\|_{L^1(I_P)} 
    \le 5 \|\K\|_{L^1(\R)} \|u_M^{P,m}-u_M^{P} \|_{L^\infty(I_P)}
    \to 0, \quad \mbox{as }m\to +\infty. 
\end{align} 
Therefore, gathering~\eqref{e.eps}--\eqref{J8} we deduce that
\begin{align*}
e_m \rightarrow 0, \quad \mbox{in } \mathcal{D}'(I_P\times (0, T)).
\end{align*}
Let us now introduce the function
\begin{align*}
 \theta^{P, m}_M(x,t) = \theta_{i +1/2}^{P , n}, \quad \mbox{for } (x,t) \in [x_i, x_{i + 1}) \times [t_n, t_{n+ 1}).
 \end{align*}
 Then, as the proof of estimate \eqref{estimationLlogLGE},  applying in particular estimate \eqref{gradentdis} and Dunford-Pettis Theorem (see Proposition \ref{weak star convergence}), we deduce the existence of $\theta^P_M$ such that for any $\varphi \in C_c(I_P\times(0,T))$ we have
 \begin{align}\label{weakL1.thetam}
 \int_{I_P \times (0, T)} \theta^{P, m}_M(x,t)  \, \varphi(x,t) \, \dd x \dd t \rightarrow \int_{I_P\times (0, T)} \theta^P_M(x,t) \, \varphi(x,t) \, \dd x \dd t, \quad \mbox{as }m \to+\infty.
\end{align}
Besides, for $x \in [x_i,x_{i+1})$ for some $i\in\mathcal{I}_{N_m}$, we rewrite the functions $a_m$ and $b_m$ as
\[
a_m(x) = \frac{x}{\Delta x_m} - \left\lfloor\frac{x}{\Delta x_m}\right\rfloor, \quad  b_m(x) = 1 - a_m(x),
\]
where $\lfloor \cdot \rfloor$ is the floor function, 
so that, equality~\eqref{decompo.conv} becomes
{\small
\begin{align*}
    \partial_t u^{P,m}_M(x,t) - e_m(x, t)&=  \lambda[u^P_M]_+(x,t)\left\{ a_m(x)\left(\theta^{P, m}_M(x + \Delta x_m,t) + L^P\right) + b_m(x) \left(\theta^{P, m}_M(x,t) + L^P\right) \right\}\\ 
    &- \lambda[u^P_M]_- (x,t)\left\{ a_m(x) \left(\theta^{P,m}_M(x,t) + L^P\right)  + b_m(x) \left(\theta^{P, m}_M(x - \Delta x_m,t) + L^P\right)\right\} .
\end{align*}}
Now we define, for any $\varphi \in C^\infty_c(I_P\times (0,T))$, the term $A_m$ by
\[
A_m = \int_{I_P\times(0, T) } \left(\partial_t u^{P, m}_M - e_m\right)(x,t) \, \varphi(x,t) \, \dd x \dd t.
\]
Recombining the terms, we rewrite $A_m$ as
{\small \begin{multline*}
    A_m = \int_{I_P\times (0, T)}\theta^{P,m}_M(x,t) \, \Big\{ a_m(x)\,\left(\lambda[u^P_M]_+ \varphi\right)(x - \Delta x_m,t) + b_m(x) \left(\lambda[u^P_M]_+ \varphi\right)(x,t) \\ -  a_m(x) \left(\lambda[u^P_M]_- \varphi\right)(x,t) - b_m(x) \left(\lambda[u^P_M]_- \varphi\right)(x + \Delta x_m,t)\Big\} \, \dd x \dd t + L^P \int_{I_P \times (0,T)} (\lambda[u^P_M] \varphi)(x,t)  \, \dd x \dd t.
\end{multline*} }
Now, define $B_m$ by
{\small\begin{align*}
  B_m &= \int_{I_P\times (0, T)}\theta^{P,m}_M(x,t) \Big\{ a_m(x)\left(\lambda[u^P_M]_+ \varphi\right)(x,t)+ b_m(x) \left(\lambda[u^P_M]_+\varphi\right)(x,t)\\
  &\phantom{xxxxxx}-  a_m(x) \left(\lambda[u^P_M]_- \varphi\right)(x,t)  - b_m(x) \left(\lambda[u^P_M]_-\varphi\right)(x,t) \Big\}\,\dd x\dd t + L^P \int_{I_P\times(0,T)} (\lambda[u^P_M] \varphi)(x,t) \, \dd x \dd t,
\end{align*}}
which yields, using the relation $\lambda[u^P_M]=(\sigma_M^P\ast u^P_M)$,
\begin{align*}
  B_m = \int_{I_P\times (0,T)} \left(\sigma_M^P(\cdot) \ast u^P_M(\cdot,t)\right)(x) \, \varphi(x,t) \, \left(\theta^{P,m}_M(x,t)+L^P\right)\, \dd x \dd t.
\end{align*}
Therefore, since $(\sigma_M^P \ast u^P_M)$ is continuous and thanks to~\eqref{weakL1.thetam}, we have
\begin{align*}
    B_m \to \int_{I_P\times(0,T)} \left(\sigma_M(\cdot) \ast u^P_M(\cdot,t)\right)(x) \, \varphi(x,t) \,\left( \theta^P_M(x,t)+L^P\right) \, \dd x \dd t, \quad \mbox{as }m \to +\infty.
\end{align*}
Furthermore, we notice that
\begin{multline*}
    |A_m - B_m| \leq \sup_{\pm} \bigg\{ \| (\lambda[u^P_M]_\pm \varphi)(\cdot \mp \Delta x_m,\cdot) - (\lambda[u^P_M]_\pm \varphi)(\cdot, \cdot)\|_{L^{\infty}(I_P\times(0, T)} \bigg\} \,\|\theta^{P,m}_M\|_{L^1(I_P\times(0,T))}.
\end{multline*}
Besides, for all $(x,t)\in I_P\times (0,T)$, we notice that it holds
\begin{align*}
    \big| (\lambda[u^P_M]_+ \varphi)(x - \Delta x_m,t)& - (\lambda[u^P_M]_+ \varphi)(x, t)\big|\\
    &\leq \|\varphi\|_{L^\infty(I_P\times(0,T))} \int_{I_P} \left|\sigma_M^P(y) \left(u^P_M(x-\Delta x_m-y,t)-u^P_M(x-y,t)\right)\right|\,\dd y\\
    &+ \|u^P_M\|_{L^\infty(I_P\times(0,T))} \int_{I_P} \left|\sigma_M^P(y) \left(\varphi(x,t)-\varphi(x-\Delta x_m,t)\right)\right| \, \dd y.
\end{align*}
Thereby, thanks to the regularity of $u^P_M$ and $\varphi$, we obtain
\begin{align*}
    \| (\lambda[u^P_M]_+ \varphi)(\cdot - \Delta x_m,\cdot)& - (\lambda[u^P_M]_+ \varphi)(\cdot, \cdot)\|_{L^\infty(I_P\times(0,T))} \to 0, \quad \mbox{as }m\to+\infty.
\end{align*}
Using similar arguments for the other terms appearing in the estimate of $A_m-B_m$, and the uniform (w.r.t.~$\eps_m$) $L^1$ bound on $\theta^{P,m}_M$ (cf. \eqref{TVestim}), we deduce that
\begin{align*}
    |A_m-B_m| \to 0, \quad \mbox{as }m \to +\infty.
\end{align*}
Hence, we conclude that
 \begin{equation*}
    \partial_t u^P_M = \left(\sigma_M^P \ast u^P_M\right) \, \left(\theta^P_M + L^P\right) \quad \mbox{in } \ \mathcal{D}'(I_P \times (0, T)).
 \end{equation*}
 Moreover, using equality~\eqref{uepsilonx} and arguing as before, we deduce 
 \[
 \partial_x u^{P, m}_M\rightarrow \theta^P_M \quad \mbox{in } \mathcal{D}'(I_P\times (0, T)) \mbox{ as }m\to +\infty,
 \]
 so that $\theta^P_M = \partial_x u^P_M$ in the sense of distributions. Finally, we conclude that
 \[
 \partial_t u^P_M = \left(\sigma_M^P\ast u^P_M\right) \, \left(\pa_x u^P_M + L^P\right) \quad  \mbox{in } \mathcal{D}'(I_P\times(0, T)),
 \]
 which finishes the proof of Theorem~\ref{convergence in periodic}.
    
\section{Numerical experiments}\label{sec.numexp}
We present in this section some numerical simulations that demonstrate the evolution of the solution $v$ of the system  (\ref{non periodic equation}), from its approximation $u^P_M+L^Px$, which is calculated numerically using the scheme \eqref{schemeIC}-\eqref{numerical compacted scheme}, where we used definition \eqref{cesaro} to calculate  $\sigma_{M,j}^P$. To do this, we consider the following initial data
$$v_0(x)= \frac{2}{\pi}\arctan(x)+1, 
$$
and a physical kernel that naturally appears in the so-called Peierls-Nabarro model, which is known in the framework of isotropic elasticity with edge dislocations (see \cite{311, 19} for more details). This kernel is given by 
$$K(x) =  \frac{\mu b^2}{2\pi (1-\nu)} \frac{x^2-\zeta^2}{(x^2+\zeta^2)^2}, 
$$
where $\nu=\frac{\lambda}{2(\lambda+\mu)}$  is the Poisson ratio and $\lambda, \mu>0$ are the Lamé coefficients for isotropic elasticity. Here, $\zeta \neq 0$  is a physical parameter depending only on the material and represents the size of the dislocation core. Moreover, the vector 
$\vec{b}=b(1,0)$ is the Burgers vector, which reflects the direction of dislocation motion. 
In our simulations, we took the special case $\frac{\mu b^2}{2\pi (1-\nu)} =1$ and $\zeta=1$ for simplicity. Obviously, these initial data and kernel verify the required assumptions {\bf (H1)-(H2)-(H2)'}. \\
Taking the parameters indicated in Table \ref{tab1}, we observe in Figure \ref{figure1} the evolution of $v\approx u^P_M+L^Px$ over time, as well as its convergence towards the stationary state  $v=0$ of the equation (\ref{non periodic equation}), which is reached at $T=1400$.  
Additionally, Figure \ref{figure2} shows the evolution of  the dislocation density 
$\partial_x v \approx \partial_x u^P_M+L^P$ with respect to time.  
We see in these simulations what is physically expected: the dislocations move along the Burgers vector $\vec{b}=b(1,0)$ until they reach the boundary of the domain, after which they disappear as the domain size becomes infinite. 
The total variation over time of the numerical solution, represented in Figure \ref{figure3}, also confirms the convergence towards the stationary state, starting from time $T=1400$. 
  \begin{figure}[ht!]
    \centering
    \begin{subfigure}[t]{0.45\textwidth}
        \centering
      \includegraphics[width=\textwidth]{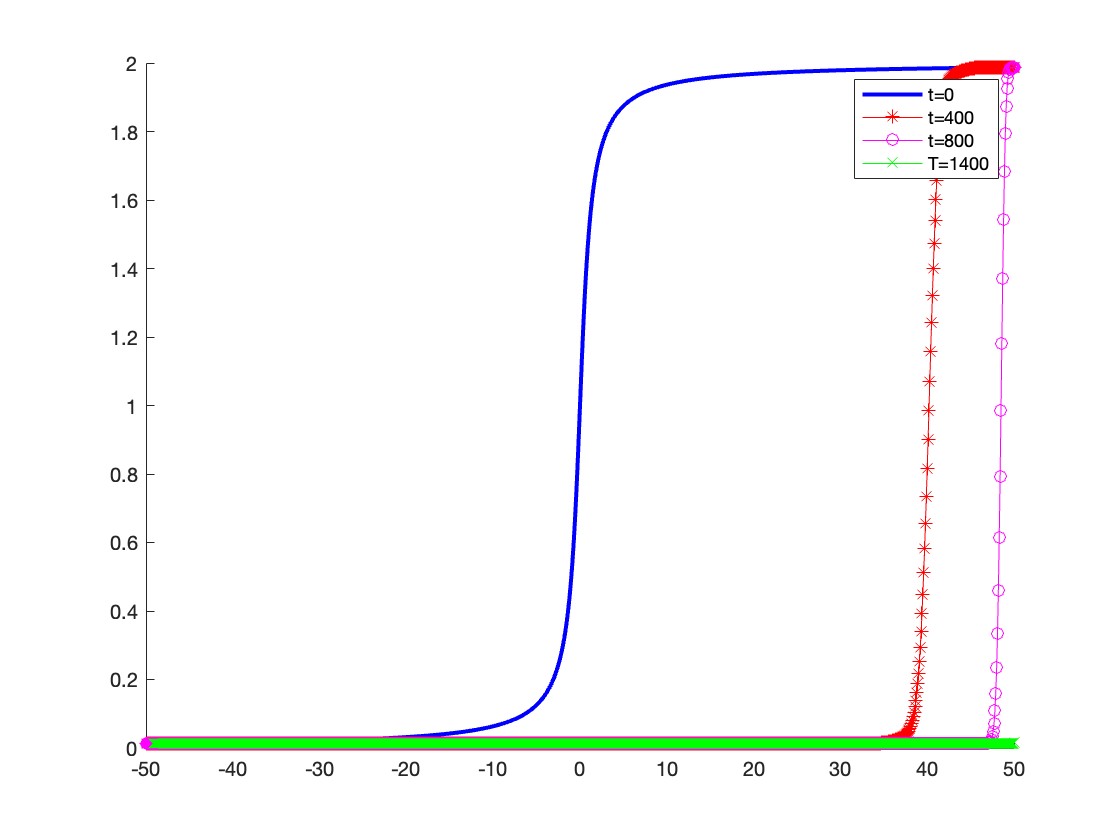}
        \caption{Approximate solution $u^P_M+L^Px$.}
        \label{figure1}
    \end{subfigure}%
    \hfill  \hspace{-2cm}
    \begin{subfigure}[t]{0.45\textwidth}
 \includegraphics[width=\textwidth]{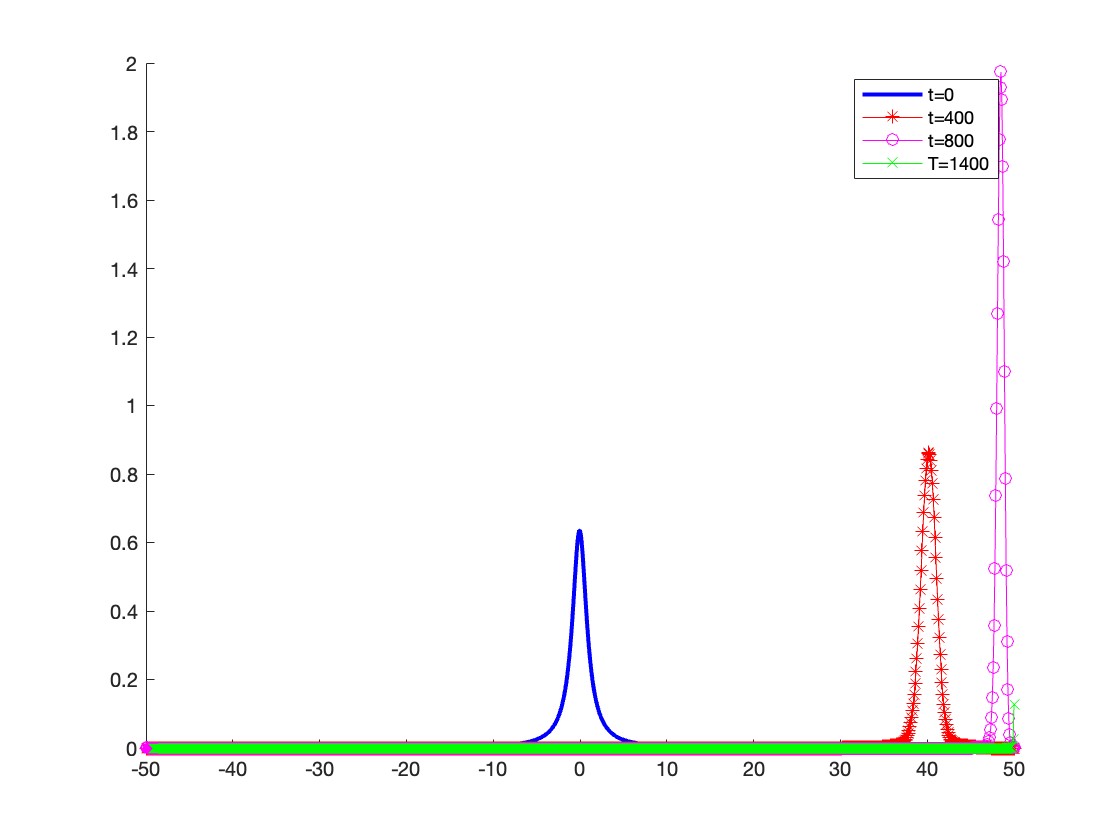}
      \caption{Approximate density $\partial_x u^P_M+L^P$.}
        \label{figure2}
         \end{subfigure} 
         \vspace{-0.2cm}
         \caption{Evolution towards stationary state.}
  \end{figure}     
\begin{center}
 \begin{tabular}{|*{4}{c|}}
     \hline
    \quad $M$ \quad &  \quad $P$  \quad &  \quad $N$  \quad &  \quad $N_t$  \\ \hline 
      \quad $400$  \quad &  \quad $50$  \quad &  \quad $500$  \quad & \quad $7\times 10^4$  \quad\\ \hline
   \end{tabular}
        \captionof{table}{Used parameters in  Figures \ref{figure1} and \ref{figure2}.}
        \label{tab1}  
   \end{center}
  \newpage 

It is important to note that the loss of periodicity in the solution in the above simulation is due to the linear term $L^Px$, which dampens the dislocation near the boundary and becomes increasingly weaker as $P$ increases.

  \begin{figure}[ht!]
        \centering
        \includegraphics[width=0.45\textwidth]{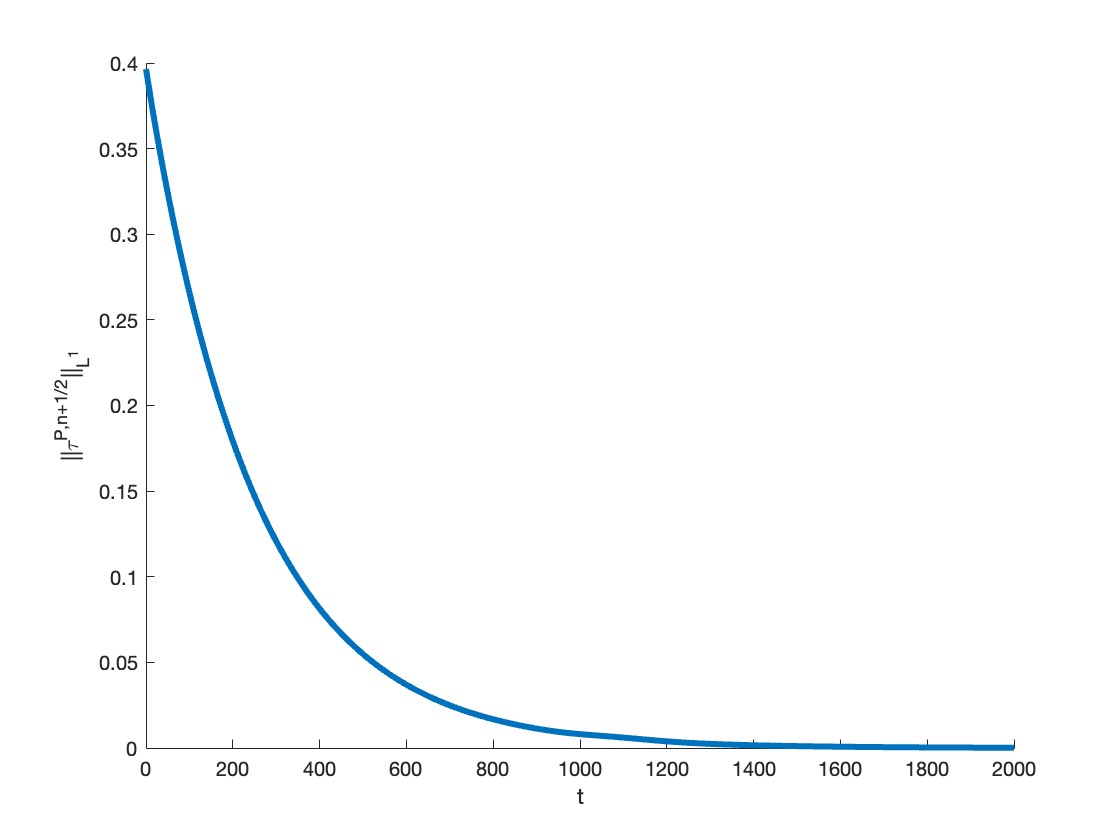}
        \caption{Total variation with respect to time of $u^P_M+L^Px.$}
        \label{figure3}
\end{figure}

In Figure  \ref{figure4}, we show the effect of the parameter $P$ on the precision of the numerical scheme. In particular, we note the impact of this parameter on the calculation of the solution $v\approx \partial_x u^P_M+L^P$ at $T=1400$. As expected, the results improve as $P$ increases, given the convergence of $u^P_M+L^Px$  to $v$ when $P \to +\infty$. The parameters used in the last simulation are specified in the following 
 table, which thus approves the decrease of the $L^\infty$-norm (at $T=1400$) with respect to this parameter.\\
 
 \begin{center}
 \begin{tabular}{|*{2}{c|}}
     \hline
 $M=100$,  $\Delta t=0.02$,   $\Delta x=0.1$   &  $L^\infty$-norm of $u^P_M+L^Px$  \\ \hline \hline
        $P=10$ &    $0.0635$  \quad\\ \hline
        $P=20$ &    $0.0319$  \quad\\ \hline
          $P=30$ &    $0.0212$  \quad\\ \hline
             $P=40$ &    $0.0159$  \quad\\ \hline
             $P=50$ &    $0.0127$  \quad\\ \hline
            $P=100$ &   $0.0065$ \quad\\ \hline
   \end{tabular}
        \captionof{table}{Used parameters in Figure \ref{figure4}.}
        \label{tab2}
 \end{center}

  \begin{figure}[ht!]
        \centering
        \includegraphics[width=0.50\textwidth]{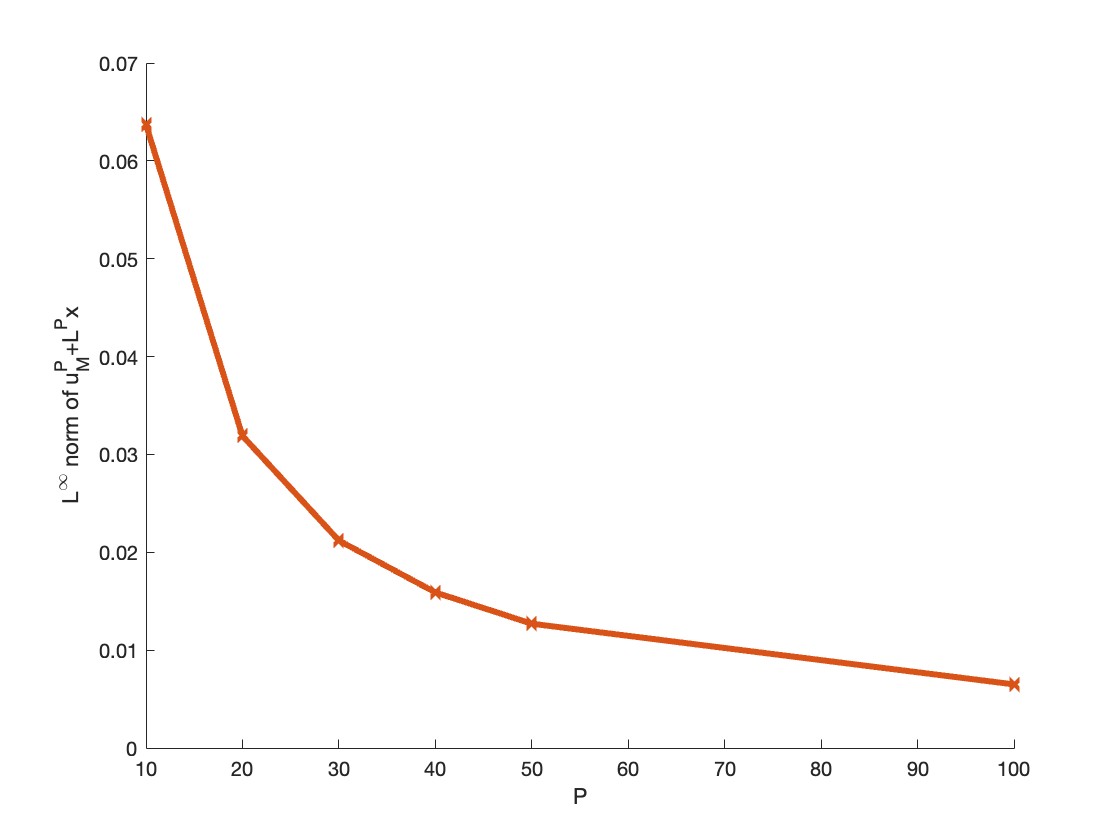}
        \caption{Impact of the parameter $P$.}
        \label{figure4}
\end{figure}

\begin{remark}\label{rem.alter-nu}
Although the proposed scheme provides better estimations than its alternative  \eqref{def.alternative}, no significant difference was observed in the numerical results between the two.
The only notable difference lies in computational speed: alternative  \eqref{def.alternative} is slightly faster, as it only requires the computation of a discrete integral, unlike definition \eqref{cesaro}, which involves a discrete convolution.
\end{remark}

\begin{appendix}

\section{Technical results}\label{app.techni}

In this section, we state technical results needed for our approach. In the sequel, we will denote by $I$ an interval of $\R$ with $I \subseteq \R$. Of course these results also hold if $I=I_P$ where we recall definition~\eqref{defI} of $I_P$.

\begin{lemma}[$ L\log L $ estimate]\label{Llog L estimate} 
If $w \in L^1(I)$ is a nonnegative function, then $ \int_{I} f(w) \dd x <  \infty$ if and only if $w \in L\log L(I)$. Moreover, we have the following estimates
\begin{equation}\label{f in llogl}
    \int_{I} f(w) \, \dd x \leq 1 + \|w\|_{L\,\log \,L(I)} + \|w\|_{L^1(I)} \, \ln\left(1 + \|w\|_{L\,\log \,L(I)}\right),
\end{equation}
and
\begin{equation}\label{w in llogl}
    \|w\|_{L\,\log \,L (I)}\leq 1 + \|w\|_{L^1(I)}\,\ln(1 + e^2) +   \int_{I} f(w) \, \dd x.
\end{equation}
\end{lemma}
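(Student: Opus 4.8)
The plan is to establish the two quantitative estimates \eqref{f in llogl} and \eqref{w in llogl} first, and then to read off the claimed equivalence from them at no extra cost: if $w\in L\log L(I)$ then $\|w\|_{L\log L(I)}<\infty$ and, since $w\in L^1(I)$ by hypothesis, the right-hand side of \eqref{f in llogl} is finite, so $\int_I f(w)\,\dd x<\infty$; conversely, if $\int_I f(w)\,\dd x<\infty$ then the right-hand side of \eqref{w in llogl} is finite (again because $w\in L^1(I)$), hence $\|w\|_{L\log L(I)}<\infty$. Throughout one may assume $w\not\equiv 0$, the null case being immediate, and in \eqref{f in llogl} one may assume $\|w\|_{L\log L(I)}<\infty$, the estimate being trivial otherwise.

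The whole argument rests on two elementary pointwise inequalities for $\psi(t):=t\ln(e+t)$, $t\ge 0$: first $f(t)\le\psi(t)$, and second $\psi(t)\le f(t)+t\ln(1+e^2)$. For the first, the inequality is trivial on $\{0\le t\le 1/e\}$ where $f(t)=0$, and on $\{t\ge 1/e\}$ it reduces, after dividing by $t$ and writing $\ln(e+t)=\ln t+\ln(1+e/t)$, to $\tfrac1e\le t\ln(1+e/t)$; I would prove this by checking that $t\mapsto t\ln(1+e/t)$ is nondecreasing on $(0,\infty)$ — its derivative equals $\ln(1+s)-s/(1+s)\ge0$ with $s=e/t$ — and then evaluating at $t=1/e$, which gives $\tfrac1e\ln(1+e^2)>\tfrac1e$. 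For the second inequality, on $\{0\le t\le 1/e\}$ one has $\ln(e+t)\le\ln(e+1/e)\le\ln(1+e^2)$, while on $\{t\ge 1/e\}$ one has $e/t\le e^2$, so $\ln(1+e/t)\le\ln(1+e^2)$ and $\psi(t)=t\ln t+t\ln(1+e/t)\le f(t)+t\ln(1+e^2)$.

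With these in hand, \eqref{f in llogl} follows by a scaling argument. Set $\mu:=\|w\|_{L\log L(I)}$; since the infimum in the definition of the norm is attained (a consequence of monotone convergence, as $\mu\mapsto\int_I\psi(w/\mu)\,\dd x$ is non-increasing and right-continuous), one has $\int_I w\ln(e+w/\mu)\,\dd x=\mu\int_I\psi(w/\mu)\,\dd x\le\mu$. If $\mu\ge1$, then $e+w\le\mu(e+w/\mu)$, hence $\ln(e+w)\le\ln\mu+\ln(e+w/\mu)$, and using the first pointwise inequality
\[
\int_I f(w)\,\dd x\le\int_I w\ln(e+w)\,\dd x\le\|w\|_{L^1(I)}\ln\mu+\mu;
\]
if $\mu<1$, then $w\le w/\mu$ gives directly $\int_I f(w)\,\dd x\le\int_I w\ln(e+w/\mu)\,\dd x\le\mu$. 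In either case $\int_I f(w)\,\dd x\le\mu+\|w\|_{L^1(I)}\ln(1+\mu)$, which yields \eqref{f in llogl}. For \eqref{w in llogl}, set $\mu:=1+\|w\|_{L^1(I)}\ln(1+e^2)+\int_I f(w)\,\dd x\ge1$; then $w/\mu\le w$ gives $\psi(w/\mu)\le\tfrac1\mu\psi(w)$, and integrating the second pointwise inequality yields $\int_I\psi(w)\,\dd x\le\int_I f(w)\,\dd x+\|w\|_{L^1(I)}\ln(1+e^2)=\mu-1$, so that $\int_I\psi(w/\mu)\,\dd x\le(\mu-1)/\mu<1$; by the definition of the norm this forces $\|w\|_{L\log L(I)}\le\mu$.

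The main obstacle — really the only non-routine point — is the pointwise inequality $f(t)\le t\ln(e+t)$: it is precisely the monotonicity of $t\mapsto t\ln(1+e/t)$ together with its value at $t=1/e$ that make the specific cutoff $1/e$ and the additive constant $1/e$ in the definition \eqref{1.deff} of $f$ exactly the right choices. Everything else is the scaling manipulation above and bookkeeping with the Zygmund norm, plus the standard fact that its defining infimum is attained.
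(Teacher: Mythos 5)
Your proof is correct. Note that the paper does not actually prove this lemma: it simply cites \cite[Lemma 3.2]{HaMo10} and \cite[Lemma 4.1]{2}, so your self-contained argument is necessarily a "different route", and it is a sound one. The two pointwise comparisons between $f$ and $\psi(t)=t\ln(e+t)$ both check out: the monotonicity of $t\mapsto t\ln(1+e/t)$ via $\ln(1+s)\ge s/(1+s)$ and the evaluation $\tfrac1e\ln(1+e^2)>\tfrac1e$ give $f\le\psi$, and the bound $\ln(1+e/t)\le\ln(1+e^2)$ on $\{t\ge 1/e\}$ together with the trivial case gives $\psi(t)\le f(t)+t\ln(1+e^2)$. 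The Luxemburg-norm manipulations are also correct: the admissible set $\{\mu:\int\psi(w/\mu)\le 1\}$ is an up-set, so the infimum is attained by monotone convergence; the case split $\mu\ge 1$ versus $\mu<1$ in \eqref{f in llogl} is handled properly and in fact yields the slightly sharper bound $\int_I f(w)\le\mu+\|w\|_{L^1}\ln(1+\mu)$; and in \eqref{w in llogl} the choice of $\mu$ gives $\int_I\psi(w/\mu)\le(\mu-1)/\mu\le 1$, hence admissibility. One point worth a sentence in a final write-up: the space $L\log L(I)$ is defined in \eqref{defLlogL} by finiteness of the modular $\int_I w\ln(e+w)\,\dd x$, not of the norm, so the step "$w\in L\log L(I)$ implies $\|w\|_{L\log L(I)}<\infty$" in your equivalence paragraph should be justified — but it follows immediately from your own scaling inequality $\psi(w/\mu)\le\mu^{-1}\psi(w)$ for $\mu\ge 1$, and indeed the full equivalence drops out of the two pointwise inequalities alone (since $f\le\psi\le f+w\ln(1+e^2)$ and $w\in L^1$). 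This is a minor presentational remark, not a gap.
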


\begin{proof}
See~\cite[Lemma 3.2]{HaMo10} and~\cite[Lemma 4.1]{2}.
\end{proof}

\begin{lemma}[Modulus of continuity]\label{modulus of continuity} 
Let $T > 0$, and assume that $w \in L^{\infty}(I\times(0,T))$ such that
\[
\|\partial_x w\|_{L^{\infty}(0, T; L\,\log\, L(I))} + \|\partial_t w\|_{L^{\infty}(0, T; L\,\log\, L(I))}  \leq C_{\rm mod}.
\]
Then, for all $\gamma, \,h \geq 0$ such that $x, x+\gamma \in I$ and $t \times (0, T - h)$, we have 
\[
\left|w(x + \gamma, t+h) - w(x, t)\right| \leq 6 C_{\rm mod} \, \left( \frac{1}{\ln(1 + \frac{1}{\gamma})} + \frac{1}{\ln(1 + \frac{1}{h})}\right).
\]
\end{lemma}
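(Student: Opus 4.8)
The plan is to reduce the two-dimensional increment to one-dimensional increments in $x$ and in $t$ and to control each of them by exploiting the integrability gain that the $L\log L$ norm provides over sets of small measure. Writing
\[
|w(x+\gamma,t+h)-w(x,t)| \le |w(x+\gamma,t+h)-w(x,t+h)| + |w(x,t+h)-w(x,t)|,
\]
it suffices to bound the spatial increment of $w(\cdot,s)$ (for a.e.\ $s$) and the temporal increment of $w(x,\cdot)$ separately and then add the two. The basic tool is the following \emph{small-set estimate}: there is an absolute constant $C>0$ such that for every $\phi\in L\log L(I)$ and every measurable $A\subseteq I$ with $|A|\le 1$,
\[
\int_A |\phi|\,\dd x \le \frac{C\,\|\phi\|_{L\log L(I)}}{\ln\!\big(1+\tfrac1{|A|}\big)}.
\]
This follows from the defining inequality $\int_I \tfrac{|\phi|}{\mu}\ln(e+\tfrac{|\phi|}{\mu})\,\dd x\le 1$ of the Luxemburg norm with $\mu=\|\phi\|_{L\log L(I)}$: split $A$ into $\{|\phi|\le K\}$ and $\{|\phi|>K\}$, bound the first part by $K|A|$ and the second by $\mu/\ln(e+K/\mu)$ via Chebyshev's inequality, and optimize the threshold $K$ (taking $K$ of size $\mu/(|A|\ln(1+\tfrac1{|A|}))$). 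In particular $\partial_x w(\cdot,s)$ and $\partial_t w(\cdot,s)$ lie in $L^1(I)$ for a.e.\ $s$, so $w$ has a representative which is absolutely continuous in each variable; we work with this representative.

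For the spatial increment, the fundamental theorem of calculus together with the small-set estimate applied to $\phi=\partial_x w(\cdot,s)$ and $A=(x,x+\gamma)$ gives directly
\[
|w(x+\gamma,s)-w(x,s)| \le \int_x^{x+\gamma} |\partial_x w(y,s)|\,\dd y \le \frac{C\,C_{\rm mod}}{\ln\!\big(1+\tfrac1\gamma\big)}.
\]
The temporal increment is the delicate part, since no pointwise bound on $\partial_t w$ is available. For a scale $\delta>0$ I would average in space over an interval $(x,x+\delta)\subseteq I$ and write
\[
w(x,t+h)-w(x,t) = \tfrac1\delta\!\int_x^{x+\delta}\!\!\big(w(y,t+h)-w(y,t)\big)\dd y + \tfrac1\delta\!\int_x^{x+\delta}\!\!\big(w(x,t+h)-w(y,t+h)\big)\dd y - \tfrac1\delta\!\int_x^{x+\delta}\!\!\big(w(x,t)-w(y,t)\big)\dd y.
\]
The last two terms are spatial increments on $(x,x+\delta)$, hence bounded by $C\,C_{\rm mod}/\ln(1+\tfrac1\delta)$. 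For the first term, Fubini gives $\tfrac1\delta\int_t^{t+h}\big(\int_x^{x+\delta}\partial_t w(y,s)\,\dd y\big)\dd s$, and the small-set estimate in $y$ applied to the inner integral yields the bound $\tfrac h\delta\cdot C\,C_{\rm mod}/\ln(1+\tfrac1\delta)$. Choosing $\delta=h$ then gives $|w(x,t+h)-w(x,t)|\le C'\,C_{\rm mod}/\ln(1+\tfrac1h)$, and adding the spatial contribution produces a bound of the announced form; tracking the constants with a sharp choice of threshold in the small-set estimate is what yields the explicit factor $6$.

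The main obstacle is the temporal increment: one cannot differentiate $w(x,\cdot)$ pointwise in $x$, so the spatial averaging is needed to convert the time integral into a double integral on which the $L\log L$ control applies, after which the averaging scale $\delta$ must be balanced against $h$. Two further technical points are the sharp form of the small-set estimate (Chebyshev plus an optimal threshold) together with the constant bookkeeping required to reach the factor $6$, and the mild regularity argument (absolute continuity of a representative, or equivalently a Lebesgue-point argument) justifying the use of the fundamental theorem of calculus. Finally, when $\gamma$ or $h$ is not small the estimate is not in the interesting regime: it follows from elementary bounds, and on the periodic domain $I_P$ one may in addition reduce increments modulo the period.
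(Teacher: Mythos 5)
Your argument is correct and is essentially the proof from the references the paper cites for this lemma (it only cites, it does not reprove): the spatial increment is handled by a small-set $L\log L$ estimate applied to $\partial_x w(\cdot,s)$, and the temporal increment by averaging in space over an interval of length $\delta=h$ and combining Fubini with the same small-set estimate applied to $\partial_t w(\cdot,s)$. The one piece you leave implicit is the bookkeeping behind the explicit factor $6$ --- with the threshold $K$ you propose the temporal term comes out marginally above $6C_{\rm mod}$, so a slightly sharper choice is needed --- but this is a quantitative refinement, not a gap in the method.
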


\begin{proof}
See~\cite[Lemma 4.3]{HaMo10} and~\cite[Lemma 4.4]{2}.
\end{proof}

\begin{proposition}[Weak-$L^1$ compactness] \label{weak star convergence} 
Consider a sequence of functions $(\theta^{\eta})_{\eta >0}$ satisfying for some $T>0$
\[
\|\theta^{\eta}(t)\|_{L^1(I)} + \int_{I} f\left(\theta^{\eta}(x,t)\right) \, \dd x \leq A_T, \quad \mbox{for a.e. } t \in (0, T),
\]
where $A_T$ is independent of $\eta$. Then, there exists a function $\theta$ and a constant $C_T=C_T(A_T)$ such that 
\[
\|\theta(t)\|_{L^1(I)} + \int_{I} f\left(\theta(x,t)\right) \, \dd x \leq C_T, \quad \mbox{for a.e. } t \in (0, T).
\]
Moreover, for any function $\varphi \in C_c (I\times(0,T))$, we have 
\[
\int_{I\times (0,T)} \theta^{\eta} \, \varphi \, \dd x \dd t \rightarrow \int_{I \times (0, T)} \theta \, \varphi \, \dd x \dd t \quad \mbox{as }\eta \to 0.
\]
\end{proposition}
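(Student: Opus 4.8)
The proof rests on the de la Vall\'ee-Poussin criterion and the Dunford-Pettis theorem. First I would integrate the hypothesis in time: since the bound $\|\theta^{\eta}(t)\|_{L^1(I)} + \int_{I} f(\theta^{\eta}(x,t))\,\dd x \leq A_T$ holds for a.e.\ $t \in (0,T)$, integrating over $(0,T)$ yields $\|\theta^{\eta}\|_{L^1(I\times(0,T))} \leq T A_T$ and $\int_{I\times(0,T)} f(\theta^{\eta})\,\dd x\dd t \leq T A_T$, both uniformly in $\eta$.

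Next, I would use that the function $f$ defined in~\eqref{1.deff} is nonnegative, nondecreasing, convex, and satisfies $f(s)/s \to +\infty$ as $s \to +\infty$. Consequently the de la Vall\'ee-Poussin criterion applies and shows that, for every bounded subinterval $\Omega \subset I$, the family $(\theta^{\eta})_{\eta>0}$ is uniformly integrable on $\Omega \times (0,T)$ (if $I$ itself is bounded, e.g.\ $I = I_P$, this holds directly on all of $I\times(0,T)$ and the argument simplifies accordingly). By the Dunford-Pettis theorem, $(\theta^{\eta})$ is then relatively weakly sequentially compact in $L^1(\Omega\times(0,T))$; exhausting $I$ by an increasing sequence of bounded intervals and extracting a diagonal subsequence, I obtain a function $\theta \in L^1_{\mathrm{loc}}(I\times(0,T))$ and a subsequence (not relabeled) with $\theta^{\eta} \rightharpoonup \theta$ weakly in $L^1(\Omega\times(0,T))$ for every bounded $\Omega \subset I$. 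In particular, since any $\varphi \in C_c(I\times(0,T))$ belongs to $L^{\infty}$ and is supported in some $\Omega\times(0,T)$, this gives the desired convergence $\int_{I\times(0,T)} \theta^{\eta}\,\varphi\,\dd x\dd t \to \int_{I\times(0,T)} \theta\,\varphi\,\dd x\dd t$ as $\eta \to 0$.

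It remains to transfer the bound to $\theta$ with its pointwise-in-time character. For fixed $0 \leq t_1 < t_2 \leq T$ and a bounded $\Omega \subset I$, the maps $u \mapsto \int_{t_1}^{t_2}\!\int_\Omega |u|\,\dd x\dd t$ and $u \mapsto \int_{t_1}^{t_2}\!\int_\Omega f(u)\,\dd x\dd t$ are convex and strongly lower semicontinuous on $L^1(\Omega\times(0,T))$ (the second because $f$ is convex and nonnegative), hence weakly lower semicontinuous; passing to the limit in $\int_{t_1}^{t_2}\!\int_\Omega (|\theta^{\eta}| + f(\theta^{\eta}))\,\dd x\dd t \leq (t_2-t_1)A_T$ gives $\int_{t_1}^{t_2}\!\int_\Omega (|\theta| + f(\theta))\,\dd x\dd t \leq (t_2-t_1)A_T$. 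Since $t_1,t_2$ are arbitrary, the Lebesgue differentiation theorem yields $\int_\Omega (|\theta(x,t)| + f(\theta(x,t)))\,\dd x \leq A_T$ for a.e.\ $t \in (0,T)$, and letting $\Omega \uparrow I$ along a countable exhaustion (using monotone convergence, and that a countable union of the exceptional null sets is still null) gives the claimed estimate, with $C_T = A_T$. I expect the only delicate point to be precisely this last step — recovering the time-slice bound from the space-time weak convergence — which is handled by the lower semicontinuity plus Lebesgue differentiation argument above; the rest is a routine application of Dunford-Pettis.
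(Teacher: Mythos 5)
Your proof is correct and follows exactly the route the paper intends: the paper itself only cites \cite[Proposition 4.6]{2} for this statement (and explicitly names the Dunford--Pettis theorem when invoking it), and your de la Vall\'ee-Poussin/Dunford--Pettis argument, together with the diagonal extraction over a bounded exhaustion of $I$, is the standard proof behind that reference. The recovery of the a.e.-in-$t$ slice bound via convexity, weak lower semicontinuity and Lebesgue differentiation is handled correctly, so nothing is missing.
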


\begin{proof}
See~\cite[Proposition 4.6]{2} and references therein.
\end{proof}

\section{Proof of Proposition~\ref{prop.PMinfty}}\label{app.proofPMinfty}

Thanks to Theorem~\ref{convergence in periodic}  we know that the constants appearing in~\eqref{second property}--\eqref{mod.cont.period} do not depend on $M_\ell$ and $P_k$. Therefore, since for any $k\in\N$ and for any $(x,t)\in I_{P_k} \times (0,T-h)$ there exists a constant $C>0$ independent of $M_\ell$ and $P_k$ such that
\begin{align*}
    \left|u_{k,\ell}(x+\gamma,t+h)-u_{k,\ell}(x,t)\right| \leq C \left(\frac{1}{\ln(1+\frac{1}{\gamma})} + \frac{1}{\ln(1+\frac{1}{h})}\right), \quad \forall \ell \in \N,
\end{align*}
we deduce from Arzel\`a-Ascoli theorem, that we can extract a converging subsequence still denoted $(u_{k,\ell})_{\ell\in \N}$ which converges uniformly to a function $u_k$ as $\ell \to+ \infty$ (for any $k\in\N$). In particular, $u_k(\cdot,0)=u^{P_k}_0(\cdot)$ and $u_k$ satisfies
\begin{align}\label{aux}
    \left|u_k(x+\gamma,t+h)-u_k(x,t)\right| \leq C \left(\frac{1}{\ln(1+\frac{1}{\gamma})} + \frac{1}{\ln(1+\frac{1}{h})}\right), \quad \forall k \in \N.
\end{align}
Arguing as in the proof of Theorem~\ref{convergence in periodic} one can easily show that, as $\ell \to \infty$,
\begin{align}\label{weakstar.l.to.infty}
    \pa_x u_{k,\ell} \rightharpoonup \pa_x u_k \quad \mbox{weakly} \mbox{ in } L^1(I_{P_k}\times(0,T)), \quad \forall k\in\N.
\end{align}
Furthermore, since the constant in~\eqref{aux} does not depend on $P_k$, then applying once more time Arzel\`a-Ascoli theorem we deduce the existence of a converging subsequence still denoted $(u_k)_{k\in \N}$ which converges uniformly to a limit function $v$ on every compact set $K \subset \R \times [0,T)$. In particular, $v(\cdot,0)=v_0(\cdot)$ and $v$ also satisfies
\begin{align*}
    \left|v(x+\gamma,t+h)-v(x,t)\right| \leq C \left(\frac{1}{\ln(1+\frac{1}{\gamma})} + \frac{1}{\ln(1+\frac{1}{h})}\right). 
\end{align*}
Moreover, we have, when $k \to +\infty$,
\begin{align}\label{weakstar.k.to.infty}
    \pa_x u_k \rightharpoonup \pa_x v \quad \mbox{weakly}\mbox{ in } L^1_{\rm loc}(\R \times(0,T)).
\end{align}
Let us now identify $v$ as a solution to~\eqref{non periodic equation} in the distributional sense. For this purpose, let $\varphi \in C^\infty_c(\R \times (0,T))$, and assume that $k$ is large enough such that $\mathrm{supp}(\varphi) \subset (-P_k,P_k)\times(0,T)$. We consider the term:
\begin{align*}
    \int_{\R \times(0,T)} (\sigma_{M_\ell}^{P_k}(\cdot)\ast u_{k,\ell}(\cdot,t)(x)) \, \left(\pa_x u_{k,\ell}(x,t) + L^{P_k}\right) \varphi \, \dd x \dd t.
\end{align*}
We first notice, thanks to Young's convolution inequality, Theorem~\ref{convergence in periodic}, Lemma~\ref{lem.init.perio.to.nonperiod} and Lemma~\ref{kpdelta less than k}, that it holds

\begin{align*}
    \Bigg| \int_{\R \times(0,T)}(\sigma_{M_\ell}^{P_k}(\cdot)\ast u_{k,\ell}(\cdot,t)(x)) \, L^{P_k}& \varphi(x,t) \, \dd x \dd t \Bigg| \leq L^{P_k} \, \|\sigma_{M_\ell}^{P_k}\|_{L^1(I_{P_k})} \, \|u_{k,\ell}\|_{L^\infty(I_{P_k})} \, \|\varphi\|_{L^1(\R)}\\
    &\leq \frac{2}{P_k} \|v_0\|_{L^\infty(\R)}^2 \, \|\varphi\|_{L^1(\R)} \, \|\sigma_{M_\ell}^{P_k}\|_{L^1(I_{P_k})} \, e^{10 \|v_0\|_{L^\infty(\R)} T \, \|\K\|_{L^1(\R)}} \\
    &\leq  \frac{2}{P_k} \|v_0\|_{L^\infty(\R)}^2 \, \|\varphi\|_{L^1(\R)}\, \|\K^{P_k}_{M_\ell}\|_{L^1(I_{P_k})} \, e^{10 \|v_0\|_{L^\infty(\R)} T \, \|\K\|_{L^1(\R)}}\\
    &\leq  \frac{10}{P_k} \|v_0\|_{L^\infty(\R)}^2 \, \|\varphi\|_{L^1(\R)} \, \|\K\|_{L^1(\R)} \, e^{10\|v_0\|_{L^\infty(\R)} T \, \|\K\|_{L^1(\R)}}.
\end{align*}
Hence, passing to the limit $\ell \to+\infty$ and $k\to+\infty$ leads to
\begin{align}
    \int_{\R \times(0,T)} (\sigma_{M_\ell}^{P_k}(\cdot)\ast u_{k,\ell}(\cdot,t)(x)) \, L^{P_k} \varphi(x,t)\, \dd x \dd t \to 0.
\end{align}
Now, we consider the term
\begin{align*}
    \int_{\R \times(0,T)}(\sigma_{M_\ell}^{P_k}(\cdot)\ast u_{k,\ell}(\cdot,t)(x))  \, \pa_x u_{k,\ell}(x,t) \, \varphi(x,t) \, \dd x \dd t.
\end{align*}
We notice, thanks to Fej\'er's theorem and the uniform convergence (up to a subsequence) of $(u_{k,\ell})_{\ell\in \N}$ toward $u_k$, that for any fixed $k \in \N$, it holds
\begin{align*}
    (\sigma_{M_\ell}^{P_k} \ast u_{k,\ell}) \to (\K^{P_k} \ast u_k) - {4u_k} \left(\int_{|x|\geq P_k} \left|\K(x)\right| \, \dd x\right) \quad \mbox{in } L^\infty(I_{P_k}\times(0,T)), \, \mbox{as }\ell \to +\infty.
\end{align*}
Thus, applying~\eqref{weakstar.l.to.infty} yields, as $\ell \to +\infty$,
\begin{multline*}
    \int_{\R \times(0,T)} (\sigma_{M_\ell}^{P_k}  \ast u_{k,\ell}) \, \pa_x u_{k,\ell} \, \varphi \, \dd x \dd t \to \int_{\R\times(0,T)} (\K^{P_k}\ast u_k) \, \pa_x u_k\, \varphi \, \dd x \dd t\\
    - {4} \left(\int_{|x|\geq P_k} |\K(x)|\,\dd x\right) \left(\int_{\R \times (0,T)} u_k \, \pa_x u_k \, \varphi \, \dd x \dd t\right).
\end{multline*}
We readily deduce that the second term in the right hand side converges to zero as $k \to +\infty$, 
since $\K\in L^1(\R)$ and $u_k$  is uniformly bounded in $L^\infty(\R\times (0,T))$ as well as
$\pa_x u_k$  is uniformly bounded in $L^\infty((0,T); L^1(\R))$. Moreover, for the first term, we notice, thanks to the periodicity of $\K^{P_k}$ and $u_k$, that for a.e. $(x,t) \in \R\times(0,T)$ we have the following decomposition:
\begin{align*}
    (\K^{P_k}\ast u_k(t))(x) &- (\K \star v(t))(x) = \int_{-P_k}^{P_k} \K^{P_k}(x-y) \, u_k(y,t) \, \dd y - \int_\R \K(x-y) \, v(y,t) \, \dd y\\
    &= \int_\R \mathds{1}_{[-P_k,P_k]}(y) \,\K(x-y) \left(u_k(y,t)-v(y,t)\right) \, \dd y - \int_{|y|\geq P_k} \K(x-y) \, v(y,t) \, \dd y.
\end{align*}
Now, we notice, as a consequence of Theorem~\ref{convergence in periodic}, that we have
\begin{align*}
  \|v\|_{L^\infty(\R\times(0,T))} \leq \|v_0\|_{L^\infty(\R)}.
\end{align*}
Moreover, since $(u_k)_{k\in\N}$ converges locally uniformly (up to a subsequence) toward $v$ as $k \to +\infty$, we deduce that $u_k \to v$ strongly in $L^\infty(\R \times (0,T))$. Therefore,
\begin{align*}
    \left|(\K^{P_k}\ast u_k)(x) - (\K \star v)(x)\right| \leq \|u_k-v\|_{L^\infty(\R \times (0,T))} \, \|\K\|_{L^1(\R)} + \|v_0\|_{L^\infty(\R)} \int_{|y|\geq P_k} \left|\K(y)\right| \, \dd y,
\end{align*}
so that
\begin{align*}
    (\K^{P_k} \ast u_k) \to (\K\star v) \quad \mbox{strongly in }L^\infty(\R\times(0,T)), \, \mbox{as }k \to +\infty.
\end{align*}
Finally, using~\eqref{weakstar.k.to.infty}, we deduce that
\begin{align*}
    \int_{\R\times(0,T)} (\K^{P_k}\ast u_k) \, \pa_x u_k\, \varphi \, \dd x \dd t \to \int_{\R \times (0,T)} (\K \star v) \, \pa_x v \, \varphi\, \dd x \dd t, \quad \mbox{as }k\to+\infty.
\end{align*}
Thereby, $v$ satisfies
\begin{align*}
    \pa_t v = (\K\star v) \, \pa_x v \quad \mbox{in }\mathcal{D}'(\R \times (0,T)),
\end{align*}
which concludes the proof of Proposition~\ref{prop.PMinfty}.

\end{appendix}

\bibliographystyle{plain}
\bibliography{bibliography}

\begin{thebibliography}{10}

\bibitem{111}
R.~A. Adams.
\newblock {\em Sobolev spaces}, volume~65 of {\em Pure Appl. Math., Academic
  Press}.
\newblock Academic Press, New York, NY, 1975.

\bibitem{211}
O.~Alvarez, P.~Cardaliaguet, and R.~Monneau.
\newblock Existence and uniqueness for dislocation dynamics with nonnegative
  velocity.
\newblock {\em Interfaces Free Bound.}, 7(4):415--434, 2005.

\bibitem{250}
O.~Alvarez, E.~Carlini, R.~Monneau, and E.~Rouy.
\newblock Convergence of a first order scheme for a non-local eikonal equation.
\newblock {\em Applied Numerical Mathematics}, 56(9):1136--1146, 2006.
\newblock Numerical Methods for Viscosity Solutions and Applications.

\bibitem{1420}
O.~Alvarez, P.~Hoch, Y.~Le~Bouar, and R.~Monneau.
\newblock Small-time solvability of a non-local {Hamilton}-{Jacobi} equation
  describing dislocation dynamics.
\newblock {\em C. R., Math., Acad. Sci. Paris}, 338(9):679--684, 2004.

\bibitem{311}
O.~Alvarez, P.~Hoch, Y.~Le~Bouar, and R.~Monneau.
\newblock Dislocation dynamics: {Short}-time existence and uniqueness of the
  solution.
\newblock {\em Arch. Ration. Mech. Anal.}, 181(3):449--504, 2006.

\bibitem{19}
P.~M. Anderson, J.~P. Hirth, and J.~Lothe.
\newblock {\em Theory of dislocations}.
\newblock Cambridge: Cambridge University Press, 3rd edition edition, 2017.

\bibitem{5}
G.~Barles.
\newblock {\em Solutions de viscosit{\'e} des {\'e}quations de
  {Hamilton}-{Jacobi}}, volume~17 of {\em Math. Appl. (Berl.)}.
\newblock Paris: Springer-Verlag, 1994.

\bibitem{6}
G.~Barles.
\newblock A new stability result for viscosity solutions of nonlinear parabolic
  equations with weak convergence in time.
\newblock {\em C. R., Math., Acad. Sci. Paris}, 343(3):173--178, 2006.

\bibitem{7}
G.~Barles, P.~Cardaliaguet, O.~Ley, and R.~Monneau.
\newblock Global existence results and uniqueness for dislocation equations.
\newblock {\em SIAM J. Math. Anal.}, 40(1):44--69, 2008.

\bibitem{8}
G.~Barles, P.~Cardaliaguet, O.~Ley, and A.~Monteillet.
\newblock Uniqueness results for nonlocal {Hamilton}-{Jacobi} equations.
\newblock {\em J. Funct. Anal.}, 257(5):1261--1287, 2009.

\bibitem{1720}
G.~Barles and O.~Ley.
\newblock Nonlocal first-order {Hamilton}-{Jacobi} equations modelling
  dislocations dynamics.
\newblock {\em Commun. Partial Differ. Equations}, 31(8):1191--1208, 2006.

\bibitem{10}
R.~Boudjerada and A.~El~Hajj.
\newblock Global existence results for eikonal equation with {{\(BV\)}} initial
  data.
\newblock {\em NoDEA, Nonlinear Differ. Equ. Appl.}, 22(4):947--978, 2015.

\bibitem{11}
R.~Boudjerada, A.~El~Hajj, and M.~S. Moulay.
\newblock Existence result for a one-dimensional eikonal equation.
\newblock {\em C. R., Math., Acad. Sci. Paris}, 353(2):133--137, 2015.

\bibitem{13}
E.~Carlini, N.~Forcadel, and R.~Monneau.
\newblock A generalized fast marching method for dislocation dynamics.
\newblock {\em SIAM J. Numer. Anal.}, 49(6):2470--2500, 2011.

\bibitem{14}
M.~G. Crandall, H.~Ishii, and P.-L. Lions.
\newblock User's guide to viscosity solutions of second order partial
  differential equations.
\newblock {\em Bull. Am. Math. Soc., New Ser.}, 27(1):1--67, 1992.

\bibitem{15}
M.~G. Crandall and P.-L. Lions.
\newblock Viscosity solutions of {Hamilton}-{Jacobi} equations.
\newblock {\em Trans. Am. Math. Soc.}, 277:1--42, 1983.

\bibitem{1350}
M.~G. Crandall and P.-L. Lions.
\newblock Two approximations of solutions of {Hamilton}-{Jacobi} equations.
\newblock {\em Math. Comput.}, 43:1--19, 1984.

\bibitem{1}
A.~El~Hajj.
\newblock Global solution for a non-local eikonal equation modelling
  dislocation dynamics.
\newblock {\em Nonlinear Anal., Theory Methods Appl., Ser. A, Theory Methods},
  168:154--175, 2018.

\bibitem{HaMo10}
A.~El~Hajj and R.~Monneau.
\newblock Global continuous solutions for diagonal hyperbolic systems with
  large and monotone data.
\newblock {\em J. Hyperbolic Differ. Equ.}, 7(1):139--164, 2010.

\bibitem{3000}
N.~Forcadel.
\newblock Dislocation dynamics with a mean curvature term: short time existence
  and uniqueness.
\newblock {\em Differ. Integral Equ.}, 21(3-4):285--304, 2008.

\bibitem{17}
N.~Forcadel.
\newblock Comparison principle for a generalized fast marching method.
\newblock {\em SIAM J. Numer. Anal.}, 47(3):1923--1951, 2009.

\bibitem{18}
N.~Forcadel, C.~Le~Guyader, and C.~Gout.
\newblock Generalized fast marching method: applications to image segmentation.
\newblock {\em Numer. Algorithms}, 48(1-3):189--211, 2008.

\bibitem{50000}
A.~Ghorbel and R.~Monneau.
\newblock Well-posedness and numerical analysis of a one-dimensional non-local
  transport equation modelling dislocations dynamics.
\newblock {\em Math. Comput.}, 79(271):1535--1564, 2010.

\bibitem{4}
K.~Hoffman.
\newblock {\em Banach spaces of analytic functions.}
\newblock New York: Dover Publications, Inc., reprint of the 1962 original
  edition, 1988.

\bibitem{2}
L.~Monasse and R.~Monneau.
\newblock Gradient entropy estimate and convergence of a semi-explicit scheme
  for diagonal hyperbolic systems.
\newblock {\em SIAM J. Numer. Anal.}, 52(6):2792--2814, 2014.

\bibitem{1620}
S.~Osher and J.~A. Sethian.
\newblock Fronts propagating with curvature-dependent speed: {Algorithms} based
  on {Hamilton}-{Jacobi} formulations.
\newblock {\em J. Comput. Phys.}, 79(1):12--49, 1988.

\bibitem{22}
D.~Rodney, Y.~{Le Bouar}, and A.~Finel.
\newblock Phase field methods and dislocations.
\newblock {\em Acta Materialia}, 51(1):17--30, 2003.

-

\end{thebibliography}

\end{document}